\newtheorem{thm}{Theorem}[section]
\newtheorem{lem}[thm]{Lemma}
\newtheorem{cor}[thm]{Corollary}
\newtheorem{exm}[thm]{Example}
\newtheorem{rem}[thm]{Remark}
\makeatletter \@addtoreset{equation}{section}
\def\R{\mathbb{R}}
\def\P{\mathbb{P}}
\def\E{\mathbb{E}}
\def\var{\operatorname{Var}}		
\def\cov{\operatorname{Cov}}		
\def\dx{\operatorname{d}}
\begin{document}
	
	\title{On a new concept of stochastic domination and the laws of large numbers}

	\titlerunning{A new concept of stochastic domination and the laws of large numbers}        
	
	\author{L\^{e} V\v{a}n Th\`{a}nh}
	
	\institute{	L\^{e} V\v{a}n Th\`{a}nh \at 
		Department of Mathematics, Vinh University, Nghe An, Vietnam\\
		\email{levt@vinhuni.edu.vn}
	}
	
	\date{}


	\maketitle
	
	\begin{abstract}			
		Consider a sequence of positive integers $\{k_n,n\ge1\}$, 
		and an array of nonnegative real numbers $\{a_{n,i},1\le i\le k_n,n\ge1\}$
		satisfying $\sup_{n\ge 1}\sum_{i=1}^{k_n}a_{n,i}=C_0\in (0,\infty).$
		This paper introduces the concept of $\{a_{n,i}\}$-stochastic domination.
		We develop some techniques concerning this concept and apply them
		to remove an assumption in a strong law of large numbers of Chandra and Ghosal [Acta. Math. Hungarica, 1996].
		As a by-product, a considerable extension of a recent result of Boukhari [J. Theoret. Probab., 2021]
		is established and proved by a different method. The results on laws of large
		numbers are new
		even when the summands are independent. Relationships between
		the concept of $\{a_{n,i}\}$-stochastic domination and the concept of $\{a_{n,i}\}$-uniform integrability are presented.
		Two open problems are also discussed.
		
		\keywords{Stochastic domination \and Uniform integrability \and  Strong law of large numbers \and Weak law of large numbers \and Weighted sum \and Ces\`{a}ro stochastic domination}
		\subclass{60E15 \and 60F05 \and 60F15}
	\end{abstract}

	\section{Introduction and Motivation}\label{sec:intro}
	
	Let $1\le p<2$. The classical Marcinkiewicz--Zygmund strong law of large numbers (SLLN)
	states that for a sequence $\{X_n,n\ge1\}$ of independent identically distributed mean zero
	random variables, condition $\E(|X_1|^p)<\infty$ is
	necessary and sufficient for
	\begin{equation}\label{chandra05}
		\lim_{n\to\infty}\dfrac{\sum_{i=1}^nX_i}{n^{1/p}}=0 \text{ almost surely (a.s.).}
	\end{equation}
	Now, let us recall a weak dependence structure introduced in Chandra and Ghosal \cite{chandra1996extensions} as follows.
	A sequence of random variables $\{X_n,n\ge1\}$
	is said to be \textit{asymptotically almost negatively associated} (AANA) if there exists
	a sequence of nonnegative real numbers $\{q_n,n\ge1\}$ with $\lim_{n\to\infty}q_n=0$ such that
	\[\cov(f(X_n),g(X_{n+1},\ldots,X_{n+k}))\le q_n\left(\var(f(X_n))\var(g(X_{n+1},\ldots,X_{n+k}))\right)^{1/2},\]
	for all $n\ge1, k\ge 1$ and for all coordinatewise nondecreasing continuous functions $f$ and $g$
	provided the right side of the above inequality is finite. The $q_n,n\ge1$ are called \textit{mixing coefficients}. The starting point of the current investigation
	is the following SLLN established by Chandra and Ghosal \cite{chandra1996extensions}.
	
	\begin{thm}[Chandra and Ghosal \cite{chandra1996extensions}]\label{thm.chandra} 
		Let $1\le p<2$ and let $\{X_n,n\ge1\}$ be a sequence of AANA mean zero random variables 
		with the sequence of mixing coefficients
		satisfying $\sum_{n=1}^{\infty}q_{n}^2<\infty$. Let
		\[G(x)=\sup_{n\ge1}\dfrac{1}{n}\sum_{i=1}^{n}\P(|X_i|>x),\ x\in\R.\]
		If
		\begin{equation}\label{chandra01}
			\int_{0}^{\infty}x^{p-1} G(x)\dx x<\infty,
		\end{equation}
		and
		\begin{equation}\label{chandra03}
			\sum_{n=1}^{\infty}\P(|X_n|^p>n)<\infty,
		\end{equation}
		then the Marcinkiewicz--Zygmund SLLN \eqref{chandra05} is obtained.
	\end{thm}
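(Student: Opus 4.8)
I sketch a truncation proof. For each $i$ set $Y_i=\max\{-i^{1/p},\,\min\{X_i,\,i^{1/p}\}\}$, the variable $X_i$ clipped at level $i^{1/p}$. Since $x\mapsto\max\{-i^{1/p},\min\{x,i^{1/p}\}\}$ is a nondecreasing continuous function, $\{Y_i\}$, and hence $\{Y_i-\E Y_i\}$, is again AANA with the same mixing coefficients $\{q_n\}$; this is the reason for clipping rather than using the non-monotone indicator truncation. Writing $F_i(x)=\P(|X_i|>x)$, condition \eqref{chandra03} says $\sum_i F_i(i^{1/p})<\infty$, so by the Borel--Cantelli lemma $X_i=Y_i$ for all large $i$ almost surely; thus $\sum_{i=1}^n(X_i-Y_i)$ is eventually constant a.s. and $n^{-1/p}\sum_{i=1}^n(X_i-Y_i)\to0$ a.s. It therefore suffices to show $n^{-1/p}\sum_{i=1}^n Y_i\to0$ a.s., which I split into the \emph{random part} $n^{-1/p}\sum_{i=1}^n(Y_i-\E Y_i)$ and the \emph{centering part} $n^{-1/p}\sum_{i=1}^n\E Y_i$.

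For the random part the plan is to prove the second-moment series bound $\sum_{i=1}^\infty i^{-2/p}\var(Y_i)<\infty$. By the layer-cake identity and clipping,
\[\E(Y_i^2)\le \int_0^{i^{1/p}}2x\,F_i(x)\,\dx x + i^{2/p}F_i(i^{1/p}),\]
whose second terms sum by \eqref{chandra03}. For the first terms I would interchange summation and integration, reducing matters to estimating $\sum_{i>x^p}i^{-2/p}F_i(x)$. The crucial point is that the hypotheses give \emph{only} the Cesàro bound $\sum_{j=1}^iF_j(x)\le i\,G(x)$, never a bound on an individual $F_i(x)$. Summation by parts (Abel's identity with the decreasing weights $i^{-2/p}$) converts this averaged information into $\sum_{i>x^p}i^{-2/p}F_i(x)\le C\,x^{p-2}G(x)$, the exponent being admissible precisely because $2/p>1$ for $1\le p<2$; integrating then yields $\sum_i i^{-2/p}\E(Y_i^2)\le C'\int_0^\infty x^{p-1}G(x)\,\dx x<\infty$ by \eqref{chandra01}. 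Given this bound I would invoke the Kolmogorov-type maximal inequality available for AANA sequences under $\sum_n q_n^2<\infty$ to obtain a Khintchine--Kolmogorov convergence theorem, concluding that $\sum_i i^{-1/p}(Y_i-\E Y_i)$ converges a.s.; Kronecker's lemma with $b_n=n^{1/p}$ then gives $n^{-1/p}\sum_{i=1}^n(Y_i-\E Y_i)\to0$ a.s.

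For the centering part, $\E X_i=0$ gives $|\E Y_i|\le\E\big(|X_i|\mathbf{1}_{\{|X_i|>i^{1/p}\}}\big)=i^{1/p}F_i(i^{1/p})+\int_{i^{1/p}}^\infty F_i(t)\,\dx t$. The first term contributes $\sum_iF_i(i^{1/p})<\infty$ (so $n^{-1/p}\sum_i i^{1/p}F_i(i^{1/p})\to0$ by Kronecker via \eqref{chandra03}), while the tail integral is treated by the same Abel-summation passage from the Cesàro bound to $G$ and is controlled by \eqref{chandra01}. Summing the three contributions yields \eqref{chandra05}.

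The main obstacle is exactly the one-sided, averaged nature of the hypothesis: $G(x)=\sup_{n}\frac1n\sum_{i=1}^nF_i(x)$ controls the Cesàro means of the tails but dominates no single $F_i$, whereas every series that must be summed ($\sum_i i^{-2/p}\var(Y_i)$ and the centering series) is indexed term by term. Feeding the averaged bound into these per-index series through summation by parts is the technical heart of the argument, and it is precisely where the paper's $\{a_{n,i}\}$-stochastic-domination techniques are designed to be applied. A secondary delicate point is the centering near the endpoint $p=1$, where the crude absolute-moment estimate just fails to be summable and one must use the Cesàro structure together with $\E X_i=0$ more carefully. By contrast, I expect the AANA convergence theorem under $\sum_n q_n^2<\infty$ to be routine given the Chandra--Ghosal inequalities, so the real work lies in the deterministic Abel-summation estimates.
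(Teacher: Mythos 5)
Your outline is correct, and the steps you flag as delicate do go through: clipping at $i^{1/p}$ preserves AANA with the same mixing coefficients (so the Kolmogorov-type maximal inequality of Chandra--Ghosal/Ko et al.\ applies to the clipped, centered sequence), the Abel-summation bound $\sum_{i>x^p}i^{-2/p}F_i(x)\le C x^{p-2}G(x)$ is valid precisely because $2/p>1$, and the centering estimate closes even at $p=1$ by splitting the tail integral at $t=n^{1/p}$ and using \eqref{chandra01} on each piece, so the extra care you anticipate there is not actually needed. However, this is a genuinely different route from the paper's. The paper never proves Theorem \ref{thm.chandra} in this classical form; it establishes the stronger Theorem \ref{thm.lln.main1} (the same conclusion with \eqref{chandra03} deleted) by a triangular-array argument: Corollary \ref{cor.character.for.stochastic.domination.Gut1} manufactures an honest random variable $X$ with $\P(|X|>x)=G(x)$ and, via \eqref{chandra01}, $\E(|X|^p)<\infty$; the variables are clipped at the single level $b_n=n^{1/p}$ rather than at $i^{1/p}$; condition $(H)$ supplies the maximal second-moment inequality; and the output is the complete-convergence statement \eqref{main103} for maximal partial sums, from which the SLLN follows. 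The comparison is instructive: your Abel-summation step is the ``by hand'' substitute for the paper's domination device --- both convert the Ces\`{a}ro bound $\sum_{j\le i}F_j(x)\le i\,G(x)$ into term-by-term estimates --- but your per-index truncation is wedded to \eqref{chandra03}, which enters irreplaceably in three places: the Borel--Cantelli step ($X_i=Y_i$ eventually), the boundary terms $\sum_i F_i(i^{1/p})$ of the variance series, and the first piece of the centering bound. The paper's level-$n^{1/p}$ truncation replaces all three uses by $\sum_n\P(|X|>n^{1/p})\le C\,\E(|X|^p)<\infty$, which is exactly why it can dispense with \eqref{chandra03} altogether and, with the slowly varying refinements, yields the more general Theorem \ref{thm.lln.array1}. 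So your argument proves the stated theorem (essentially along the lines of Chandra and Ghosal's original proof), but it cannot be pushed to the paper's Theorem \ref{thm.lln.main1}, whereas the paper's proof delivers both at once.
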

	The above result of Chandra and Ghosal \cite{chandra1996extensions} weakens the assumptions
	in the classical Marcinkiewicz--Zygmund SLLN not only
	by considering a weak dependence structure, but also by relaxing
	the identical distribution condition. We refer to \eqref{chandra01}
	and \eqref{chandra03} as the Chandra--Ghosal conditions.
	It is clear that for a sequence $\{X_n,n\ge1\}$ of random variables with a common
	law, \eqref{chandra01} and \eqref{chandra03} are equivalent since each of them is equivalent to $\E(|X_1|^p)<\infty$.
	This leads to a natural question in this context is whether
	\eqref{chandra01} or \eqref{chandra03} can be removed.
	The current work is an attempt to answer this
	question. More precisely, we shall prove the following theorem.
	\begin{thm}\label{thm.lln.main1}
		Theorem \ref{thm.chandra} holds without Condition \eqref{chandra03}.
	\end{thm}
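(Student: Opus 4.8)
The plan is to run the classical Marcinkiewicz--Zygmund truncation scheme, but to reorganise it along \emph{dyadic blocks} so that the single step in the Chandra--Ghosal argument that actually consumes \eqref{chandra03} — the vanishing of the truncated tails — can be carried out using \eqref{chandra01} alone. Writing $S_n=\sum_{i=1}^n X_i$, I would first reduce \eqref{chandra05} to
\[
\frac{1}{2^{j/p}}\max_{1\le n\le 2^{j}}|S_n|\longrightarrow 0\quad\text{a.s. as }j\to\infty,
\]
which is equivalent to \eqref{chandra05} since for $2^{j-1}<n\le 2^{j}$ one has $n^{1/p}\ge 2^{(j-1)/p}$, so a bound on the block maximum controls every $n$ in the block up to the factor $2^{1/p}$. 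By Borel--Cantelli it then suffices to show $\sum_{j}\P\big(\max_{n\le 2^{j}}|S_n|>\varepsilon 2^{j/p}\big)<\infty$ for each $\varepsilon>0$.

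Fix $j$ and truncate at the \emph{block-level} scale $c_j=2^{j/p}$ via the continuous nondecreasing clipping map $g_c(x)=\max(-c,\min(x,c))$, setting $Y_i=g_{c_j}(X_i)$ for $1\le i\le 2^{j}$. Continuity and monotonicity of $g_c$ are what guarantee that $\{Y_i\}_{i\le 2^j}$ is again AANA with the same coefficients $\{q_n\}$ (the indicator truncation $x\mathbf{1}(|x|\le c)$ would destroy this). Since $X_i-Y_i=0$ whenever $|X_i|\le c_j$, on the event $E_j=\{\max_{i\le 2^j}|X_i|\le c_j\}$ we have $S_n=\sum_{i\le n}Y_i$ for \emph{all} $n\le 2^{j}$, whence on $E_j$,
\[
\max_{n\le 2^{j}}|S_n|\le \max_{n\le 2^{j}}\Big|\sum_{i\le n}(Y_i-\E Y_i)\Big|+\sum_{i\le 2^{j}}|\E Y_i| .
\]
It remains to bound $\P(E_j^{\,c})$, the centred fluctuation term, and the deterministic mean term.

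The crux — and the point at which \eqref{chandra03} is eliminated — is $\P(E_j^{\,c})$. Directly from the definition of $G$ with $n=2^{j}$, $x=c_j$,
\[
\P\Big(\max_{i\le 2^{j}}|X_i|>c_j\Big)\le\sum_{i=1}^{2^{j}}\P(|X_i|>2^{j/p})\le 2^{j}G(2^{j/p}),
\]
and comparing the geometric sum with the integral in \eqref{chandra01} (using that $G$ is nonincreasing) gives
\[
\sum_{j\ge1}2^{j}G(2^{j/p})\le 2p\int_{0}^{\infty}x^{p-1}G(x)\dx x<\infty .
\]
Hence the events $E_j^{\,c}$ are summable, and a.s.\ for all large $j$ the whole block is untruncated, $S_n=\sum_{i\le n}Y_i$. \emph{This is precisely the role previously played by} \eqref{chandra03}: there it forced individual summands to be untruncated eventually at scale $i^{1/p}$, whereas here \eqref{chandra01} forces the entire block to be untruncated at its own scale $2^{j/p}$ — which is all that the dyadic reduction of the SLLN requires.

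For the last two terms I would invoke a Kolmogorov-type maximal inequality for AANA sequences (available because $\sum_n q_n^2<\infty$), namely $\E\max_{n\le 2^j}|\sum_{i\le n}(Y_i-\E Y_i)|^{2}\le C\sum_{i\le 2^j}\E Y_i^{2}$, together with the domination bounds from $G$. Letting $X^{*}$ denote a variable with $\P(|X^{*}|>x)=G(x)$, so that $\E|X^{*}|^{p}=p\int_0^\infty x^{p-1}G(x)\dx x<\infty$ by \eqref{chandra01}, the averaging inequality $\frac1n\sum_{i\le n}\E\phi(|X_i|)\le\E\phi(|X^{*}|)$ (valid for nondecreasing $\phi$ with $\phi(0)=0$) yields $\sum_{i\le 2^j}\E Y_i^{2}\le 2^{j}\E[\min(|X^{*}|,2^{j/p})^{2}]$ and, using $\E X_i=0$ so that $|\E Y_i|\le\E[|X_i|\mathbf{1}(|X_i|>c_j)]$, also $\sum_{i\le 2^j}|\E Y_i|\le 2^{j}\E[|X^{*}|\mathbf{1}(|X^{*}|>2^{j/p})]$. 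Chebyshev then reduces the fluctuation term to $\sum_j 2^{j(1-2/p)}\E[\min(|X^{*}|,2^{j/p})^{2}]<\infty$, and the mean term obeys $2^{j(1-1/p)}\E[|X^{*}|\mathbf{1}(|X^{*}|>2^{j/p})]\le \E[|X^{*}|^{p}\mathbf{1}(|X^{*}|>2^{j/p})]\to0$, so it is $o(2^{j/p})$ and harmless; both facts are standard consequences of $\E|X^{*}|^{p}<\infty$ with $1\le p<2$ by summation by parts over dyadic shells. The main obstacle is thus conceptual rather than computational: realising that the tail should be controlled block-by-block at scale $2^{j/p}$ rather than coordinate-by-coordinate at scale $i^{1/p}$, after which \eqref{chandra01} alone powers the Borel--Cantelli step and the AANA maximal inequality plus moment bookkeeping through $X^{*}$ are routine.
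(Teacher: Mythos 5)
Your proof is correct, and at its core it runs on the same engine as the paper's: condition \eqref{chandra01} is upgraded to Ces\`{a}ro-type stochastic domination by a single random variable with finite $p$-th moment, the summands are truncated by the continuous clipping map rather than by indicators (so the AANA structure and the mixing coefficients survive truncation), and the fluctuations are controlled by a Kolmogorov-type maximal inequality for AANA sequences with $\ell_2$ mixing coefficients --- exactly the paper's condition $(H)$, imported from Ko et al. The genuine difference is organizational. The paper first proves a stronger statement, the weighted complete-convergence series $\sum_n n^{-1}\P\bigl(\max_{k\le n}\bigl|\sum_{i=1}^k X_i\bigr|>\varepsilon n^{1/p}\tilde L(n^{1/p})\bigr)<\infty$ for triangular arrays and general slowly varying normalizations (Theorem \ref{thm.lln.array1} and Corollary \ref{cor.MZ01}), and reads off Theorem \ref{thm.lln.main1} with $L\equiv 1$; you instead sum the block probabilities over dyadic $n=2^j$ and invoke Borel--Cantelli directly. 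Your route is more elementary and self-contained (no slowly varying machinery, no appeal to the estimates of Anh et al.), but it delivers only the SLLN for sequences, whereas the paper's route also yields a rate of convergence and the array/weighted-normalization generality exploited elsewhere in Section \ref{sec:proofs}. One step you gloss over: declaring ``let $X^{*}$ be a random variable with $\P(|X^{*}|>x)=G(x)$'' presumes that $1-G$ is a distribution function, i.e.\ that the supremum defining $G$ is right-continuous (besides $G(x)\to 0$, which does follow from \eqref{chandra01} and monotonicity). Right-continuity of such suprema is not automatic-looking and is precisely what the paper establishes in Theorem \ref{thm.character.for.stochastic.domination.1} and Corollary \ref{cor.character.for.stochastic.domination.Gut1}, so this is a fixable omission rather than a gap; alternatively you can bypass $X^{*}$ entirely by writing every moment bound as an integral against $G$, e.g.\ $\frac1n\sum_{i\le n}\E\,\phi(|X_i|)\le\int_0^\infty \phi'(t)G(t)\dx t$ for absolutely continuous nondecreasing $\phi$ with $\phi(0)=0$.
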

	
	To prove Theorem \ref{thm.lln.main1}, we develop some results concerning a new concept of
	stochastic domination which leads to the concept of the Ces\`{a}ro
	stochastic domination as a
	particular case. 
	
	Let $\{k_n,n\ge1\}$ be a sequence of positive integers. An array $\{X_{n,i},1\le i\le k_n,n\ge1\}$ of
	random variables is said to be 
	\textit{stochastically dominated} 
	by a random variable $X$ if
	\begin{equation}\label{eq.stoch.domi.03}
		\sup_{1\le i\le k_n, n\ge 1}\P(|X_{n,i}|>x)\le \P(|X|>x), \ \text{ for all } x\in\mathbb{R}.
	\end{equation}
	This concept was extended to the concept of the so-called Ces\`{a}ro
	stochastic domination by Gut \cite{gut1992complete} as follows.
	An array $\{X_{n,i},1\le i\le k_n,n\ge1\}$ of
	random variables is said to be 
	\textit{stochastically dominated in the Ces\`{a}ro sense} (or \textit{weakly mean dominated})
	by a random variable $X$ if
	\begin{equation}\label{eq.stoch.domi.05}
		\sup_{n\ge 1} \dfrac{1}{k_n}\sum_{i=1}^{k_n}\P(|X_{n,i}|>x)\le C\P(|X|>x), \ \text{ for all } x\in\mathbb{R},
	\end{equation}
	where $C>0$ is a constant. 
	It was shown by Gut \cite[Example 2.1]{gut1992complete} that
	\eqref{eq.stoch.domi.05} is strictly weaker than \eqref{eq.stoch.domi.03}.
	
	We will now introduce a new concept of stochastic domination. Let $\{a_{n,i},1\le i\le k_n,n\ge1\}$ be an array of nonnegative real numbers
	satisfying
	\begin{equation}\label{eq.weight01}
		\sup_{n\ge 1}\sum_{i=1}^{k_n}a_{n,i}=C_0\in (0,\infty).
	\end{equation}
	An array $\{X_{n,i},1\le i\le k_n,n\ge1\}$ of
	random variables is said to be 
	\textit{$\{a_{n,i}\}$-stochastically dominated} 
	by a random variable $X$ if
	\begin{equation}\label{eq.stoch.domi.07}
		\sup_{n\ge 1} \sum_{i=1}^{k_n}a_{n,i}\P(|X_{n,i}|>x)\le C_0\P(|X|>x), \ \text{ for all } x\in\mathbb{R}.
	\end{equation} 
	In view of Gut's definition in \eqref{eq.stoch.domi.05}, one may be tempted to give
	an apparently weaker definition of $\{X_{n,i},1\le i\le k_n,n\ge1\}$ being $\{a_{n,i}\}$-stochastically dominated
	by a random variable $Y$, namely that
	\begin{equation}\label{eq.stoch.domi.08}
		\sup_{n\ge 1} \sum_{i=1}^{k_n}a_{n,i}\P(|X_{n,i}|>x)\le C\P(|Y|>x),\ \text{ for all }x\in\mathbb{R},
	\end{equation}	
	for some finite constant $C> 0$. However, it will be shown in Theorem \ref{thm.character.for.stochastic.domination.2} that
	\eqref{eq.stoch.domi.07} and \eqref{eq.stoch.domi.08} are indeed equivalent.
	Therefore, concerning Gut's definition of the Ces\`{a}ro stochastic domination, we can simply choose $C=1$
	in \eqref{eq.stoch.domi.05}.	
	If $a_{n,i}=1/k_n, 1\le i\le k_n,\ n\ge1,$
	then it is obvious that $C_0=1$, and the concept of $\{a_{n,i}\}$-stochastic domination reduces to
	the concept of stochastic domination in the Ces\`{a}ro sense.
	
	If $0<p<1$ and $\{X_n,n\ge1\}$ is a sequence of
	random variables satisfying \eqref{chandra01} and \eqref{chandra03},
	then the Marcinkiewicz--Zygmund SLLN \eqref{chandra05}
	is valid irrespective of any dependence structure (see Remark 3 in Chandra and Ghosal \cite{chandra1996extensions}).
	Boukhari \cite{boukhari2021weak} recently
	used techniques from martingales theory to prove that a similar result holds true for the weak law of
	large numbers (WLLN) for maximal partial sums with general normalizing sequences. The tools developed in this paper also allow
	us to establish an extension of Theorem 1.2 of Boukhari \cite{boukhari2021weak}. A special case of our WLLNs in Section \ref{sec:proofs}
	is the following theorem.
	
	\begin{thm}\label{thm.lln.main3}
		Let $\{X_n,n\ge1\}$ be a sequence of random variables, $G(\cdot)$
 as in Theorem \ref{thm.chandra} and let
		$\{b_n,n\ge1\}$ be a nondecreasing sequence of positive real numbers such that
		\begin{equation}\label{wl01}
			\sum_{i=1}^{n}\dfrac{b_{i}}{i^2}=O\left(\dfrac{b_{n}}{n}\right).
		\end{equation}
		If
		\begin{equation}\label{wl03}
			\lim_{k\to\infty}kG(b_k)=0,
		\end{equation}
		then the WLLN
		\begin{equation}\label{wl05}
			\dfrac{1}{b_n}\max_{j\le n}\left|\sum_{i=1}^j X_i\right|\overset{\P}{\to} 0 \text{ as }n\to\infty
		\end{equation}
		is obtained.
	\end{thm}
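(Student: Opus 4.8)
The plan is to run a truncation argument at level $b_n$ that uses only the one-dimensional marginal tails, reflecting the fact that no dependence structure is imposed. Fix $\varepsilon>0$ and set $Y_{n,i}=X_i\mathbf{1}(|X_i|\le b_n)$ and $Z_{n,i}=X_i\mathbf{1}(|X_i|>b_n)$, so that $X_i=Y_{n,i}+Z_{n,i}$. Since
\[
\max_{j\le n}\Big|\sum_{i=1}^j X_i\Big|\le \max_{j\le n}\Big|\sum_{i=1}^j Y_{n,i}\Big|+\max_{j\le n}\Big|\sum_{i=1}^j Z_{n,i}\Big|,
\]
it suffices to show that each of the two pieces, divided by $b_n$, tends to $0$ in probability.

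For the tail piece I would observe that $\sum_{i=1}^j Z_{n,i}$ vanishes for every $j\le n$ on the event that $|X_i|\le b_n$ for all $i\le n$; hence
\[
\P\Big(\max_{j\le n}\Big|\sum_{i=1}^j Z_{n,i}\Big|>0\Big)\le \P\Big(\bigcup_{i=1}^n\{|X_i|>b_n\}\Big)\le \sum_{i=1}^n\P(|X_i|>b_n)\le nG(b_n),
\]
which tends to $0$ by \eqref{wl03}. For the truncated piece I would use the crude bound $\max_{j\le n}|\sum_{i=1}^j Y_{n,i}|\le \sum_{i=1}^n|Y_{n,i}|$ together with Markov's inequality, reducing matters to controlling $b_n^{-1}\sum_{i=1}^n\E|Y_{n,i}|$. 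Writing $\E|Y_{n,i}|=\E\big(|X_i|\mathbf{1}(|X_i|\le b_n)\big)\le \int_0^{b_n}\P(|X_i|>t)\,\dx t$, summing over $i$, and invoking the definition of $G$ yields $\sum_{i=1}^n\E|Y_{n,i}|\le n\int_0^{b_n}G(t)\,\dx t$. Thus the whole theorem comes down to the analytic estimate
\[
\frac{n}{b_n}\int_0^{b_n}G(t)\,\dx t\longrightarrow 0.
\]

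This last estimate is the heart of the matter and the step I expect to be the main obstacle. I would first record that \eqref{wl01} forces $b_n/n\to\infty$: with $c_n=b_n/n$ and $S_n=\sum_{i=1}^n c_i/i$, the hypothesis reads $S_n\le K c_n$, and combining this with $c_n/n=S_n-S_{n-1}$ gives $S_n\ge\big(1+\tfrac{1}{Kn-1}\big)S_{n-1}$, whence $S_n\to\infty$ and therefore $c_n\to\infty$. Next, since $G$ is nonincreasing I would partition $[0,b_n]$ along the points $b_k$ and bound $\int_{b_k}^{b_{k+1}}G\le G(b_k)(b_{k+1}-b_k)$. Given $\eta>0$, choose $K$ with $kG(b_k)<\eta$ for $k\ge K$ (possible by \eqref{wl03}); the integral over $[0,b_K]$ contributes a constant whose weight in $\frac{n}{b_n}\int_0^{b_n}G$ vanishes because $b_n/n\to\infty$, while the remainder is at most $\eta\,\frac{n}{b_n}\sum_{k=1}^{n-1}\frac{b_{k+1}-b_k}{k}$. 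A summation by parts gives
\[
\sum_{k=1}^{n-1}\frac{b_{k+1}-b_k}{k}=\frac{b_n}{n-1}-b_1+\sum_{j=2}^{n-1}\frac{b_j}{j(j-1)},
\]
and since $\frac{b_j}{j(j-1)}\le \frac{2b_j}{j^2}$ for $j\ge2$, hypothesis \eqref{wl01} shows the right-hand side is $O(b_n/n)$. Hence $\limsup_{n}\frac{n}{b_n}\int_0^{b_n}G(t)\,\dx t\le C\eta$ for a constant $C$ independent of $\eta$, and letting $\eta\downarrow 0$ finishes the estimate. Feeding this back into the truncated piece and combining with the tail piece yields \eqref{wl05}.
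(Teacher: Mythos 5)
Your proof is correct and follows essentially the same route as the paper's (which obtains Theorem \ref{thm.lln.main3} as the case $c_{n,i}\equiv 1$, $X_{n,i}=X_i$ of the weighted-array result, Theorem \ref{thm.lln.array2}): truncation at level $b_n$, a union bound for the tail part, Markov's inequality plus the bound $\sum_{i\le n}\E|Y_{n,i}|\le n\int_0^{b_n}G(t)\,\dx t$ for the truncated part, and the key estimate $\frac{n}{b_n}\int_0^{b_n}G(t)\,\dx t\to 0$ obtained by splitting the integral along the points $b_k$ and exploiting \eqref{wl01}. The differences are only presentational: the paper first replaces $G$ and $\hat G$ by tails of genuine dominating random variables via Theorem \ref{thm.character.for.stochastic.domination.1} and Corollary \ref{cor.character.for.stochastic.domination.Gut1} and finishes with the Toeplitz lemma, whereas you work with $G$ directly, inline the Toeplitz-type argument via summation by parts, and supply your own proof that \eqref{wl01} forces $b_n/n\to\infty$ (the paper cites Boukhari's Remark 2.4 (i) for this).
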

	
	\begin{rem}{\rm 
			Boukhari \cite[Theorem 1.2]{boukhari2021weak} proved Theorem
			\ref{thm.lln.main3} under a stronger condition that
			the sequence $\{X_n,n\ge1\}$ is stochastically dominated by a random variable $X$
			satisfying
			\begin{equation*}\label{boukhari01}
				\lim_{k\to\infty}k\P(|X|>b_k)=0.
			\end{equation*}
			An example in Section \ref{sec:proofs} shows that for $0<p<1$ and $b_n=n^{1/p}$,
			there exists a sequence of random variables $\{X_n,n\ge1\}$ with no stochastically dominating random variable,
			but
			\eqref{wl03} is satisfied and therefore the WLLN \eqref{wl05} is valid.
			Our proof of Theorem \ref{thm.lln.main3} is simpler than that of Theorem 1.2 
			of Boukhari \cite{boukhari2021weak} in the sense that we do not use the Doob maximal inequality
			for martingales as was done in Boukhari \cite{boukhari2021weak}.
			The WLLN for dependent random variables and random vectors was also studied in \cite{hien2015weak,kruglov2011generalization,rosalsky2009weak}, among others.
		}
	\end{rem}
	
	The rest of the paper is organized as follows. In Section \ref{sec:stochastic_domination},
	we prove the equivalence between the definitions of $\{a_{n,i}\}$-stochastic domination given in \eqref{eq.stoch.domi.07} and \eqref{eq.stoch.domi.08}.
	It is also shown that certain bounded moment
	conditions on an array of random variables $\{X_{n,i},1\le i\le k_n,n\ge1\}$
	can accomplish the concept of $\{X_{n,i},1\le i\le k_n,n\ge1\}$ being $\{a_{n,i}\}$-stochastically dominated. Section \ref{sec:relation}
	discusses about relationships between the concept of $\{a_{n,i}\}$-stochastic domination and
	the concept of $\{a_{n,i}\}$-uniform integrability.
	Strong and weak laws of large numbers for triangular arrays of random variables
	are presented in Section \ref{sec:proofs}. From these general results,
	Theorems \ref{thm.lln.main1} and \ref{thm.lln.main3} follow.  Section \ref{sec:open}
	contains further remarks and two open problems.\\

	\noindent \textit{Notation:} Throughout this paper, $\{k_n,n\ge1\}$ is assumed to be a sequence of
	positive integers. For a set $A$, $\mathbf{1}(A)$ denotes the indicator function of $A$.
	For $x\ge0$, let $\log x$ denote the logarithm base $2$ of
	$\max\{2,x\}$. For $x\ge0$ and for a fixed positive integer $\nu$, let
	\begin{equation}\label{notation1}
		\log_\nu(x):=(\log x)(\log\log x)\ldots (\log\cdots\log x),
	\end{equation}
	and
	\begin{equation}\label{notation2}
		\log_{\nu}^{(2)}(x):=(\log x)(\log\log x)\ldots (\log\cdots\log x)^{2},
	\end{equation}
	where in both \eqref{notation1} and \eqref{notation2}, there are $\nu$ factors.
	For example, $\log_2(x)=(\log x)(\log\log x)$, $\log_{3}^{(2)}(x)=(\log x)(\log\log x)(\log\log\log x)^{2}$, and so on.
	
	\section{On the concept of $\{a_{n,i}\}$-stochastic domination}\label{sec:stochastic_domination}
	In this section, we employ some properties of slowly varying functions as well as techniques in Rosalsky and Th\`{a}nh \cite{rosalsky2021note} to prove
	some results on
	the	concept of $\{a_{n,i}\}$-stochastic domination.
	We note that all results in Sections \ref{sec:stochastic_domination} and \ref{sec:relation}
	are stated for a triangular array  $\{X_{n,i},1\le i\le k_n,n\ge1\}$ of random variables but they still hold for
	a sequence of random variables $\{X_n,n\ge1\}$ by considering $X_{n,i}=X_i$, $1\le i\le k_n,n\ge1$.
	
	The following theorem is a simple result and its proof is similar to that of Theorem 2.1 of Rosalsky and Th\`{a}nh \cite{rosalsky2021note}.
	It plays a useful role in proving the laws of large numbers in Section \ref{sec:proofs}.
	
	\begin{thm}\label{thm.character.for.stochastic.domination.1}
		Let $\{X_{n,i},1\le i\le k_n,n\ge1\}$ be an array of random variables, let
		$\{a_{n,i},1\le i\le k_n,n\ge1\}$ be an array of 
		nonnegative
		real numbers satisfying \eqref{eq.weight01}
		and	let \[F(x)=1- \dfrac{1}{C_0}\sup\limits_{n\ge1}\sum_{i=1}^{k_n}a_{n,i}\P(|X_{n,i}|>x),\ x\in\R.\]
		Then $F(\cdot)$ is the distribution function of a random variable $X$ if and only if $\lim_{x\to \infty}F(x)=1.$
		In such a case, $\{X_{n,i},1\le i\le k_n,n\ge1\}$ is $\{a_{n,i}\}$-stochastically dominated by $X$.
	\end{thm}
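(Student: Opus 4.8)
The plan is to observe that $F$ automatically enjoys three of the four defining properties of a distribution function---monotonicity, right-continuity, and the limit $0$ at $-\infty$---so that being a distribution function is equivalent to the single remaining requirement $\lim_{x\to\infty}F(x)=1$. Throughout set
\[
G(x):=\sup_{n\ge1}\sum_{i=1}^{k_n}a_{n,i}\P(|X_{n,i}|>x)\quad\text{and}\quad g_n(x):=\sum_{i=1}^{k_n}a_{n,i}\P(|X_{n,i}|>x),
\]
so that $F=1-G/C_0$. First I would check monotonicity and boundedness: for each fixed $n,i$ the tail $x\mapsto\P(|X_{n,i}|>x)$ is nonincreasing, hence each finite sum $g_n$ is nonincreasing (recall $a_{n,i}\ge0$), and a supremum of nonincreasing functions is nonincreasing; thus $G$ is nonincreasing and $F$ is nondecreasing, while $0\le g_n\le\sum_{i=1}^{k_n}a_{n,i}\le C_0$ gives $0\le F\le1$.

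Next I would compute the limit at $-\infty$. For $x<0$ one has $|X_{n,i}|\ge0>x$, so $\P(|X_{n,i}|>x)=1$ and $g_n(x)=\sum_{i=1}^{k_n}a_{n,i}$; taking the supremum over $n$ and invoking \eqref{eq.weight01} gives $G(x)=C_0$, whence $F(x)=0$ for every $x<0$. In particular $\lim_{x\to-\infty}F(x)=0$.

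The main (and only slightly delicate) step is right-continuity. Each tail is right-continuous by continuity from below of $\P$ applied to $\{|X_{n,i}|>x\}\uparrow\{|X_{n,i}|>x_0\}$ as $x\downarrow x_0$; being a finite sum of such tails, each $g_n$ is right-continuous. Since $G$ is nonincreasing, its right limit at any point $x_0$ equals $\sup_{x>x_0}G(x)$, and I would finish by interchanging the two suprema:
\[
\sup_{x>x_0}G(x)=\sup_{x>x_0}\sup_{n\ge1}g_n(x)=\sup_{n\ge1}\sup_{x>x_0}g_n(x)=\sup_{n\ge1}g_n(x_0)=G(x_0),
\]
where the third equality uses $\sup_{x>x_0}g_n(x)=\lim_{x\downarrow x_0}g_n(x)=g_n(x_0)$ for each fixed $n$, by monotonicity and right-continuity of $g_n$. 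Hence $G$, and therefore $F$, is right-continuous, and combining the three established properties shows that $F$ is a distribution function exactly when $\lim_{x\to\infty}F(x)=1$.

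Finally, assuming this last condition, let $X$ have distribution function $F$. Because $F\equiv0$ on $(-\infty,0)$ we have $X\ge0$ a.s., so $|X|=X$ a.s. and therefore $\P(|X|>x)=\P(X>x)=1-F(x)=G(x)/C_0$ for every $x\in\R$. This gives
\[
\sup_{n\ge1}\sum_{i=1}^{k_n}a_{n,i}\P(|X_{n,i}|>x)=G(x)=C_0\P(|X|>x),\qquad x\in\R,
\]
which is an equality in \eqref{eq.stoch.domi.07}; in particular $\{X_{n,i},1\le i\le k_n,n\ge1\}$ is $\{a_{n,i}\}$-stochastically dominated by $X$. The only point that genuinely requires care is the interchange of suprema in the right-continuity step, so that is where I would concentrate the rigor.
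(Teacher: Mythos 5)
Your proof is correct, and it shares the paper's overall skeleton: check that $F$ is nondecreasing, tends to $0$ at $-\infty$, and is right-continuous, so that ``$F$ is a distribution function'' reduces to the single condition $\lim_{x\to\infty}F(x)=1$. Where you genuinely differ is in the right-continuity step, which is the crux in both proofs. The paper runs an explicit $\varepsilon$-argument: fix $a$, choose an index $n_0$ whose finite sum (your $g_{n_0}$) comes within $\varepsilon/2$ of the supremum at $a$, invoke right-continuity of that single sum to produce a $\delta$, and combine with monotonicity to conclude $|F(x)-F(a)|<\varepsilon$ for $0\le x-a<\delta$. You instead identify the right limit of the nonincreasing function $G$ at $x_0$ with $\sup_{x>x_0}G(x)$ and then interchange the two suprema, using $\sup_{x>x_0}g_n(x)=g_n(x_0)$ for each fixed $n$ (monotonicity plus right-continuity of $g_n$). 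Both arguments rest on exactly the same two facts---each $g_n$ is nonincreasing and right-continuous---but your interchange-of-suprema packaging eliminates the $\varepsilon$--$\delta$ bookkeeping and isolates cleanly why monotonicity is essential: a supremum of right-continuous functions is in general only lower semicontinuous, so without the monotone structure both proofs would collapse. You also spell out a point the paper leaves implicit in its final sentence: since $F\equiv 0$ on $(-\infty,0)$, a random variable $X$ with distribution function $F$ is nonnegative a.s., hence $\P(|X|>x)=\P(X>x)=1-F(x)$ for all $x\in\R$, which yields equality (not merely the inequality) in \eqref{eq.stoch.domi.07}; that extra care is worth keeping.
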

	
	\begin{proof} It is clear that $F(\cdot)$ is nondecreasing, and
		\[\lim\limits_{x\to-\infty}F(x)=1-\dfrac{1}{C_0}\sup_{n\ge 1}\sum_{i=1}^{k_n}a_{n,i}=0.\]
		Let $\varepsilon >0$ be arbitrary. For $a\in\mathbb{R}$, let $n_0\ge 1$ be such that
		\[\dfrac{1}{C_0}\sum_{i=1}^{k_{n_0}}a_{n_0,i}\P(|X_{n_0,i}|>a)>\dfrac{1}{C_0}\sup_{n\ge1}\sum_{i=1}^{k_n}a_{n,i}\P(|X_{n,i}|>a)-\varepsilon/2,\]
		or equivalently,
		\begin{equation}\label{df01}
			1-\dfrac{1}{C_0}\sum_{i=1}^{k_{n_0}}a_{n_0,i}\P(|X_{n_0,i}|>a)<F(a)+\varepsilon /2.
		\end{equation}
		Since the function
		\[x\mapsto \dfrac{1}{C_0}\sum_{i=1}^{k_{n_0}}a_{n_0,i}\P(|X_{n_0,i}|>x),\ x\in\R, \]
		is nonincreasing and right continuous, there exists $\delta>0$ such that
		\[-\varepsilon /2<\dfrac{1}{C_0}\sum_{i=1}^{k_{n_0}}a_{n_0,i}\P(|X_{n_0,i}|>x)-\dfrac{1}{C_0}\sum_{i=1}^{k_{n_0}}a_{n_0,i}\P(|X_{n_0,i}|>a)\le 0 \text{ for  all $x$ such that } 0\le x-a<\delta.\]
		Therefore, for $x$ satisfying $0\le x-a<\delta$, we have 
		\begin{align*}
			F(x)-\varepsilon 
			&=1-\dfrac{1}{C_0}\sup\limits_{n\ge1}\sum_{i=1}^{k_n}a_{n,i}\P(|X_{n,i}|>x)-\varepsilon \\
			&\le 1- \dfrac{1}{C_0}\sum_{i=1}^{k_{n_0}}a_{n_0,i}\P(|X_{n_0,i}|>x)-\varepsilon \\
			&<1- \dfrac{1}{C_0}\sum_{i=1}^{k_{n_0}}a_{n_0,i}\P(|X_{n_0,i}|>a)-\varepsilon /2\\
			&<F(a)\ \text{ (by \eqref{df01})}
		\end{align*}
		and so $|F(x)-F(a)|<\varepsilon$. Thus $\lim\limits_{x\to a^+}F(x)=F(a)$. Since $a\in \R$ is arbitrary, this implies that $F$ is right
		continuous on $\R$. Since $F(\cdot)$ is nondecreasing, right continuous and $\lim\limits_{x\to-\infty}F(x)=0$,
		it is the distribution function of a random variable $X$ if and only if
		$\lim\limits_{x\to \infty}F(x)=1$. 
		By definition of $F(\cdot)$, $\{X_{n,i},1\le i\le k_n,n\ge1\}$
		is $\{a_{n,i}\}$-stochastically dominated by $X$. 
		\qed
	\end{proof}

	The next theorem establishes the equivalence between the definitions of  $\{a_{n,i}\}$-stochastic domination given in \eqref{eq.stoch.domi.07} and \eqref{eq.stoch.domi.08}.
	A similar result concerning the concept of stochastic domination was proved by Rosalsky and Th\`{a}nh \cite{rosalsky2021note}.

	\begin{thm}\label{thm.character.for.stochastic.domination.2}
		Let $\{X_{n,i},1\le i\le k_n,n\ge1\}$ be an array of random variables and let
		$\{a_{n,i},1\le i\le k_n,n\ge1\}$ be an array of
		nonnegative
		real numbers satisfying \eqref{eq.weight01}. 
		Then there exists a random variable $X$ satisfying \eqref{eq.stoch.domi.07}
		if and only if there exist
		a random variable $Y$ and a finite constant $C>0$ satisfying \eqref{eq.stoch.domi.08}.
		
		Moreover, (i) if $g:[0,\infty)\to [0,\infty)$ is a measurable function with $g(0)=0$ which is bounded on $[0,A]$ and differentiable on $[A,\infty)$ for some $A\ge 0$
		or (ii) if $g:[0,\infty)\to [0,\infty)$ is a continuous function which is eventually nondecreasing with $\lim_{x\to\infty}g(x)=\infty$,
		then the condition
		$\E(g(|Y|))<\infty$ where $Y$ is as in \eqref{eq.stoch.domi.08} implies that $\E(g(|X|))<\infty$ where $X$ is as in \eqref{eq.stoch.domi.07}.
	\end{thm}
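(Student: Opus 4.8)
The plan is to split the statement into the equivalence of \eqref{eq.stoch.domi.07} and \eqref{eq.stoch.domi.08} and the moment-transfer assertion, and to reduce both to a single one-sided tail inequality for the variable $X$ supplied by Theorem~\ref{thm.character.for.stochastic.domination.1}. The forward implication of the equivalence is immediate, since \eqref{eq.stoch.domi.07} is \eqref{eq.stoch.domi.08} with $Y=X$ and $C=C_0$. For the converse, I would set
\[H(x):=\sup_{n\ge1}\sum_{i=1}^{k_n}a_{n,i}\P(|X_{n,i}|>x),\qquad x\in\R,\]
and observe that \eqref{eq.stoch.domi.08} gives $0\le H(x)\le C\,\P(|Y|>x)$; since $\P(|Y|>x)\to0$ as $x\to\infty$, this forces $H(x)\to0$, i.e. $1-H(x)/C_0\to1$. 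Theorem~\ref{thm.character.for.stochastic.domination.1} then produces a random variable $X$ with distribution function $F=1-H/C_0$ satisfying \eqref{eq.stoch.domi.07}, and for this (minimal) $X$ one has the basic inequality
\begin{equation}\label{eq.tailkey}
	\P(|X|>x)=\frac{H(x)}{C_0}\le\frac{C}{C_0}\,\P(|Y|>x),\qquad x\in\R,
\end{equation}
which is what I would carry into the second part.

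Granting \eqref{eq.tailkey}, the moment assertion becomes a purely analytic statement about tail domination. Under (ii) I would split off the initial interval $[0,x_0]$ beyond which $g$ is nondecreasing; there $g$ is bounded by continuity, so that part contributes finitely, while on $[x_0,\infty)$ the Lebesgue--Stieltjes representation $\int_{[x_0,\infty)}\P(|X|\ge t)\,\dx g(t)$ applies, and since $\dx g\ge0$, \eqref{eq.tailkey} yields the bound $(C/C_0)\int_{[x_0,\infty)}\P(|Y|\ge t)\,\dx g(t)\le(C/C_0)\,\E(g(|Y|))<\infty$. Under (i) I would instead integrate by parts: writing $\E(g(|X|))=\E\bigl(g(|X|)\mathbf{1}(|X|\le A)\bigr)+\E\bigl(g(|X|)\mathbf{1}(|X|>A)\bigr)$, the first term is at most $\sup_{[0,A]}g<\infty$, and on $\{|X|>A\}$ the identity $g(x)=g(A)+\int_A^x g'(t)\,\dx t$ together with Tonelli recasts the tail contribution as $g(A)\P(|X|>A)+\int_A^\infty g'(t)\,\P(|X|>t)\,\dx t$, which \eqref{eq.tailkey} compares to the analogous expression for $Y$.

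The main obstacle is precisely this last comparison in case (i): the inequality $g'(t)\,\P(|X|>t)\le(C/C_0)\,g'(t)\,\P(|Y|>t)$ is valid only where $g'\ge0$, so the one-sided bound \eqref{eq.tailkey} transfers finiteness only on the range where $g$ is nondecreasing. The substance of the argument is therefore to confine the estimate to that range---exploiting boundedness on $[0,A]$ to absorb the non-monotone part into a finite quantity, and differentiability on $[A,\infty)$ to run the integration by parts---and to justify the interchange of integration (Tonelli for the nonnegative pieces, and absolute convergence of $\int_A^\infty g'(t)\,\P(|Y|>t)\,\dx t$, which I would extract from $\E(g(|Y|))<\infty$ once the sign of $g'$ is controlled). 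Case (ii) avoids this difficulty altogether, because eventual monotonicity makes $\dx g$ a nonnegative measure and the transfer of finiteness through \eqref{eq.tailkey} is then immediate; I therefore expect (i) to demand the more careful bookkeeping.
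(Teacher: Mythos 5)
Your handling of the equivalence half is exactly the paper's own argument: the forward direction by taking $Y=X$, $C=C_0$, and the converse by observing that \eqref{eq.stoch.domi.08} forces $\sup_{n\ge1}\sum_{i=1}^{k_n}a_{n,i}\P(|X_{n,i}|>x)\to 0$ and invoking Theorem \ref{thm.character.for.stochastic.domination.1} to produce the minimal $X$, for which $C_0\P(|X|>x)\le C\P(|Y|>x)$. For the moment-transfer half the paper gives no details at all (it defers to Theorem 2.4 of Rosalsky and Th\`anh), so your case (ii) argument --- splitting at a point $x_0$ past which $g$ is nondecreasing, bounding the compact part by continuity, and applying Tonelli to the nonnegative Lebesgue--Stieltjes measure $\dx g$ on $[x_0,\infty)$ --- is a correct and complete version of a step the paper leaves implicit.

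The gap is in case (i). You correctly identify the obstruction (the comparison $g'(t)\P(|X|>t)\le (C/C_0)\,g'(t)\P(|Y|>t)$ is only valid where $g'\ge 0$), but your proposed remedy --- confine the estimate to the range where $g$ is nondecreasing and absorb the non-monotone part via boundedness on $[0,A]$ --- cannot be carried out: hypothesis (i) gives only differentiability on $[A,\infty)$, so the non-monotonicity of $g$ need not be confined to $[0,A]$; it can persist arbitrarily far out. In fact no bookkeeping can close this case under the literal hypotheses. Take $\P(X=2j+1)=\P(Y=2j+2)=p_j$ for $j\ge 1$ with $\sum_j p_j=1$ (this fits the framework with $k_n\equiv 1$, $a_{n,1}\equiv 1$, $X_{n,1}\equiv X$, so $X$ is the minimal dominating variable and $Y$ satisfies \eqref{eq.stoch.domi.08} with $C=1$, since $\P(X>x)\le\P(Y>x)$ for all $x$); then a smooth $g\ge 0$ vanishing outside small neighborhoods of the odd integers with $g(2j+1)=1/p_j$ satisfies every condition in (i), yet $\E(g(|Y|))=0$ while $\E(g(|X|))=\sum_j 1=\infty$. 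So the transfer in (i) is only true with an additional sign condition, namely $g$ nondecreasing (equivalently $g'\ge 0$) on $[A,\infty)$ --- a condition every application in the paper satisfies, e.g.\ $h(x)=x^pL(x)$ in the proof of Theorem \ref{thm.sufficiency.for.stochastic.domination.2}, where $h'(x)\ge \tfrac{p}{2}x^{p-1}L(x)\ge 0$ for large $x$. With that hypothesis made explicit, your outline closes immediately via Lemma \ref{lem.RT21} and Tonelli, exactly as in your case (ii); without it, the "careful bookkeeping" you defer to does not exist.
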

	\begin{proof}
		The necessity half is immediate by taking $Y=X$ and $C=C_0$.
		Conversely, if there exist a nonnegative random variable 
		$Y$ and a finite constant $C>0$ satisfying \eqref{eq.stoch.domi.08}, then
		\begin{equation*}
			\begin{split}
				\lim_{x\to\infty}\sup_{n\ge 1} \sum_{i=1}^{k_n}a_{n,i}\P(|X_{n,i}|>x)\le C\lim_{x\to\infty}\P(|Y|>x)=0,
			\end{split}
		\end{equation*}
		and so by Theorem \ref{thm.character.for.stochastic.domination.1}, there exists a random variable $X$ with distribution function 
		\[F(x)=1-\dfrac{1}{C_0}\sup_{n\ge 1} \sum_{i=1}^{k_n}a_{n,i}\P(|X_{n,i}|>x),\ x\in\R.\]
		This implies
		\begin{equation*}
			\begin{split}
				\sup_{n\ge 1} \sum_{i=1}^{k_n}a_{n,i}\P(|X_{n,i}|>x)=C_0(1-F(x))=C_0\P(X>x),\ x\in\R
			\end{split}
		\end{equation*}
		thereby verifying \eqref{eq.stoch.domi.07}.
		
		The rest of the proof proceeds in a similar manner as that of Theorem 2.4 (i) and (ii)
		in Rosalsky and Th\`{a}nh \cite{rosalsky2021note}. The details will be omitted. 
		\qed
	\end{proof}
	
	The following result is a direct consequence of Theorem \ref{thm.character.for.stochastic.domination.2}
	by choosing $a_{n,i}=1/k_n$ for all $1\le i\le k_n$, $n\ge1$. 
	It says that in the Ces\`{a}ro stochastic domination definition, \eqref{eq.stoch.domi.05} can be simplified to 
	\begin{equation}\label{eq.Gut05.stoch.domi}
		\sup_{n\ge 1} \dfrac{1}{k_n}\sum_{i=1}^{k_n}\P(|X_{n,i}|>x)\le \P(|Y|>x), \ \text{ for all } x\in\mathbb{R},
	\end{equation}
	for some random variable $Y$ (surprisingly, this was not
	noticed by Gut \cite{gut1992complete}).
	\begin{cor}\label{cor.compare.Gut}
		Let $\{X_{n,i},1\le i\le k_n,n\ge1\}$ be an array of random variables.
		Then there exist a random variable $X$ and a finite constant $C>0$ satisfying \eqref{eq.stoch.domi.05}
		if and only if there exists
		a random variable $Y$ satisfying \eqref{eq.Gut05.stoch.domi}.
		Moreover, if $g$ is a measurable function satisfying assumptions in Theorem \ref{thm.character.for.stochastic.domination.2}, 
		then the condition  $\E(g(|X|))<\infty$ where $X$ is as in \eqref{eq.stoch.domi.05}
		implies that $\E(g(|Y|))<\infty$ where $Y$ is as in \eqref{eq.Gut05.stoch.domi}.
	\end{cor}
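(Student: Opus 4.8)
The plan is to obtain the corollary as a direct specialization of Theorem \ref{thm.character.for.stochastic.domination.2}, taking $a_{n,i}=1/k_n$ for all $1\le i\le k_n$ and $n\ge1$. With this choice one has $\sum_{i=1}^{k_n}a_{n,i}=1$ for every $n$, so that $C_0=\sup_{n\ge1}\sum_{i=1}^{k_n}a_{n,i}=1$ and condition \eqref{eq.weight01} holds trivially. First I would rewrite the two domination inequalities of the theorem under this substitution: inequality \eqref{eq.stoch.domi.07} becomes $\sup_{n\ge1}\frac{1}{k_n}\sum_{i=1}^{k_n}\P(|X_{n,i}|>x)\le\P(|X|>x)$, which is precisely \eqref{eq.Gut05.stoch.domi}, whereas inequality \eqref{eq.stoch.domi.08} becomes $\sup_{n\ge1}\frac{1}{k_n}\sum_{i=1}^{k_n}\P(|X_{n,i}|>x)\le C\,\P(|Y|>x)$, which is precisely \eqref{eq.stoch.domi.05}.

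The only point requiring care is the bookkeeping of the dominating variables, since the labels $X$ and $Y$ play opposite roles in the theorem and in the corollary. In Theorem \ref{thm.character.for.stochastic.domination.2} the variable appearing in the $C_0$-inequality \eqref{eq.stoch.domi.07} is called $X$, while in the corollary the variable in the $C=1$ inequality \eqref{eq.Gut05.stoch.domi} is called $Y$; conversely the ``free-constant'' variable is $Y$ in the theorem but $X$ in the corollary. Thus I would identify the corollary's $Y$ with the theorem's $X$, and the corollary's $X$ with the theorem's $Y$. Under this identification the equivalence asserted by the theorem, that a dominating variable for \eqref{eq.stoch.domi.07} exists if and only if a dominating variable and a constant for \eqref{eq.stoch.domi.08} exist, transcribes verbatim into the claimed equivalence between \eqref{eq.Gut05.stoch.domi} and \eqref{eq.stoch.domi.05}.

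For the moment assertion I would invoke the ``Moreover'' clause of Theorem \ref{thm.character.for.stochastic.domination.2}, which states that $\E(g(|Y|))<\infty$ forces $\E(g(|X|))<\infty$ for $g$ of either type (i) or type (ii). Applying the same swap of labels, this reads as $\E(g(|X|))<\infty$ forcing $\E(g(|Y|))<\infty$ in the corollary's notation, which is exactly what is to be proved. Since every step is a substitution into an already-established result, there is no genuine obstacle; the sole place where one could slip is the direction of the moment implication, so the hard part, such as it is, amounts to verifying that the label swap sends the theorem's implication $\E(g(|Y|))<\infty\Rightarrow\E(g(|X|))<\infty$ to the corollary's implication $\E(g(|X|))<\infty\Rightarrow\E(g(|Y|))<\infty$, as required.
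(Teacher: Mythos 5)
Your proposal is correct and is essentially identical to the paper's own proof: the paper presents this corollary as a direct consequence of Theorem \ref{thm.character.for.stochastic.domination.2} obtained by choosing $a_{n,i}=1/k_n$ (so that $C_0=1$), exactly as you do. Your careful bookkeeping of the swapped labels—identifying the corollary's $Y$ with the theorem's $X$ and vice versa, and checking that this swap sends the theorem's moment implication to the corollary's implication $\E(g(|X|))<\infty\Rightarrow\E(g(|Y|))<\infty$—is precisely the only point of substance, and you handle it correctly.
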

	
	By using integration by parts, we have from \eqref{eq.Gut05.stoch.domi} that for all $r>0$ and $x\ge0$
	\begin{equation}\label{eq.Gut07.stoch.domi}
		\sup_{n\ge 1} \dfrac{1}{k_n}\sum_{i=1}^{k_n}\E\left(|X_{n,i}|^r\mathbf{1}(|X_{n,i}|\le x)\right)\le \E\left(|Y|^r\mathbf{1}(|Y|\le x)\right)+x^{r}\P(|Y|>x),
	\end{equation}	
	and
	\begin{equation}\label{eq.Gut09.stoch.domi}
		\sup_{n\ge 1} \dfrac{1}{k_n}\sum_{i=1}^{k_n}\E\left(|X_{n,i}|^r\mathbf{1}(|X_{n,i}|>x)\right)\le \E\left(|Y|^r\mathbf{1}(|Y|>x)\right).
	\end{equation}
	We will use \eqref{eq.Gut07.stoch.domi} and \eqref{eq.Gut09.stoch.domi} in our proofs without further mention.
	
	The following consequence of Theorem \ref{thm.character.for.stochastic.domination.1}
	is also useful in proving the laws of large numbers.
	
	\begin{cor}\label{cor.character.for.stochastic.domination.Gut1}
		Let $\{X_{n,i},1\le i\le k_n,n\ge1\}$ be an array of random variables
		and	let \[F(x)=1-\sup\limits_{n\ge1}\dfrac{1}{k_n}\sum_{i=1}^{k_n}\P(|X_{n,i}|>x),\ x\in\R. \]
		Then $F$ is the distribution function of a random variable $X$  if and only if 
		$\lim_{x\to \infty}F(x)=1.$
		In such a case, $\{X_{n,i},1\le i\le k_n,n\ge1\}$ is stochastically dominated in the Ces\`{a}ro sense by $X$.
	\end{cor}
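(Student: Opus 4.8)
The plan is to apply Theorem \ref{thm.character.for.stochastic.domination.1} with the constant weights $a_{n,i}=1/k_n$, $1\le i\le k_n$, $n\ge1$. First I would check that this choice satisfies hypothesis \eqref{eq.weight01}: for each fixed $n$ one has $\sum_{i=1}^{k_n}a_{n,i}=\sum_{i=1}^{k_n}1/k_n=1$, so that $\sup_{n\ge1}\sum_{i=1}^{k_n}a_{n,i}=1$ and hence $C_0=1\in(0,\infty)$. Thus Theorem \ref{thm.character.for.stochastic.domination.1} is applicable to this array of weights.

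Next I would observe that, with $C_0=1$ and $a_{n,i}=1/k_n$, the function appearing in Theorem \ref{thm.character.for.stochastic.domination.1},
\[1-\dfrac{1}{C_0}\sup_{n\ge1}\sum_{i=1}^{k_n}a_{n,i}\P(|X_{n,i}|>x),\]
coincides exactly with the function $F$ defined in the statement of this corollary. Consequently the characterization furnished by Theorem \ref{thm.character.for.stochastic.domination.1}, namely that $F$ is the distribution function of a random variable $X$ if and only if $\lim_{x\to\infty}F(x)=1$, transfers verbatim to the present setting.

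Finally, in the case where $\lim_{x\to\infty}F(x)=1$, Theorem \ref{thm.character.for.stochastic.domination.1} asserts that $\{X_{n,i},1\le i\le k_n,n\ge1\}$ is $\{a_{n,i}\}$-stochastically dominated by $X$, i.e. \eqref{eq.stoch.domi.07} holds. Substituting $a_{n,i}=1/k_n$ and $C_0=1$, this inequality reads $\sup_{n\ge1}\frac{1}{k_n}\sum_{i=1}^{k_n}\P(|X_{n,i}|>x)\le\P(|X|>x)$ for all $x\in\R$, which is precisely the statement that $\{X_{n,i},1\le i\le k_n,n\ge1\}$ is stochastically dominated in the Ces\`{a}ro sense by $X$ in the constant-free form \eqref{eq.Gut05.stoch.domi}. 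There is essentially no obstacle here: the entire content is the verification that the normalization built into the definition of $\{a_{n,i}\}$-stochastic domination specializes, under the choice of equal weights $1/k_n$, to $C_0=1$, so that the Ces\`{a}ro form emerges with no extraneous multiplicative constant.
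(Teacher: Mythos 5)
Your proof is correct and is exactly the paper's (implicit) argument: the corollary is stated there as a direct consequence of Theorem \ref{thm.character.for.stochastic.domination.1} obtained by taking $a_{n,i}=1/k_n$, for which $C_0=1$, just as you do. Nothing is missing; the verification of \eqref{eq.weight01} and the identification of $F$ are the whole content.
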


	In view of \eqref{eq.Gut05.stoch.domi}, if
	$\{X_{n,i},1\le i\le k_n,n\ge1\}$ is stochastically dominated in the Ces\`{a}ro sense by a random variable $Y$, 
	then
	it is $\{a_{n,i}\}$-stochastically dominated by $Y$ with $a_{n,i}=1/k_n$, $1\le i\le k_n$, $n\ge1$.
	The following example shows that the concept of $\{a_{n,i}\}$-stochastic domination is strictly 
	weaker than the concept of stochastic domination in the Ces\`{a}ro sense.
	
	\begin{exm}\label{exm.stoch.domi}{\rm 
			Let $k_n\equiv n$ and let $m_n$ be the greatest integer number
			which is less than or equal to $n/2$. Let $a_{n,i}=1/m_n$ for $1\le i\le m_n,\ n\ge 2$ and $a_{n,i}=1/n^2$ for $m_n<i\le n, n\ge1$.
			Let $\{X_{n,i},1\le i\le n,n\ge1\}$ be an array of random variables such that 
			\[\P(X_{n,i}=-1)=\P(X_{n,i}=1)=1/2,\ 1\le i\le m_n, \ n\ge 2\]
			and
			\[\P(X_{n,i}=-n)=\P(X_{n,i}=n)=1/2,\ m_n< i\le n, \ n\ge 1.\]
			Then, for $x\ge 1$, we have
			\[\dfrac{1}{n}\sum_{i=1}^n \P(|X_{n,i}|>x)=
			\begin{cases}
				0 & \text{ if }n\le x,\\
				\dfrac{n-m_n}{n} & \text{ if }n> x.
			\end{cases} \]
			This implies
			\[\sup_{n\ge 1}\dfrac{1}{n}\sum_{i=1}^n \P(|X_{n,i}|>x)\ge \dfrac{1}{2}, \ \text{ for all } x\ge1.\]
			Thus by Corollary \ref{cor.character.for.stochastic.domination.Gut1}, there is no random variable $Y$ such that $\{X_{n,i},1\le i\le n,n\ge1\}$
			is stochastically dominated in the Ces\`aro sense by $Y$. Now, we have
			\[1<C_0=\sup_{n\ge 1}\sum_{i=1}^n a_{n,i}=\sup_{n\ge 2}\left(1+\dfrac{n-m_{n}}{n^2}\right)\le 2,\]
			and for $x\ge 1$ and $n\ge 1$,
			\begin{equation*}
				\sum_{i=1}^n a_{n,i}\P(|X_{n,i}|>x)=\sum_{i=m_n+1}^n\dfrac{1}{n^2}\P(|X_{n,i}|>x)=
				\begin{cases}
					0 & \text{ if } n\le x,\\
					\dfrac{n-m_n}{n^2}\le \dfrac{1}{n} & \text{ if } n> x.
				\end{cases}
			\end{equation*}
			Thus $\sum_{i=1}^n a_{n,i}\P(|X_{n,i}|>x)\to 0$ as $x\to\infty$.
			By Theorem \ref{thm.character.for.stochastic.domination.1}, 
			$\{X_{n,i},1\le i\le n,n\ge1\}$
			is $\{a_{n,i}\}$-stochastically dominated by a random variable $X$ with distribution function
			\[F(x)=1-\dfrac{1}{C_0}\sup_{n\ge 1}\sum_{i=1}^n a_{n,i}\P(|X_{n,i}|>x),\ x\in\R.\]
			This shows that the concept of $\{a_{n,i}\}$-stochastic domination is strictly weaker than the concept of stochastic domination in the Ces\`{a}ro sense.
		}
	\end{exm}
	
	Recall that a real-valued function $L(\cdot )$ is said to be \textit{slowly varying} (at infinity) if it is 
	a positive and measurable function on $[A,\infty)$ for some $A\ge0$, and for each $\lambda>0$,
	\begin{equation*}\label{rv01}
		\lim_{x\to\infty}\dfrac{L(\lambda x)}{L(x)}=1.
	\end{equation*}
	If $L(\cdot)$ is a slowly varying function, then
	there exists a slowly varying function $\tilde{L}(\cdot)$, unique up to an asymptotic equivalence, satisfying
	\begin{equation}\label{BGT1513}
		\lim_{x\to\infty}L(x)\tilde{L}\left(xL(x)\right)=1\ \text{ and } \lim_{x\to\infty}\tilde{L}(x)L\left(x\tilde{L}(x)\right)=1.
	\end{equation}
	The function $\tilde{L}(\cdot)$ is called the de Bruijn conjugate of $L(\cdot)$ (see Bingham et al. \cite[p. 29]{bingham1989regular}).
	For many ``nice'' slowly varying functions $L(\cdot)$,
	we can choose $\tilde{L}(x)=1/L(x)$. Especially, if $L(x)=(\log x)^\gamma$ or $L(x)=(\log\log x)^\gamma$ for some $\gamma\in\R$, 
	then $\tilde{L}(x)=1/L(x)$.

	Let $L(\cdot)$ be a slowly varying function and let $\alpha>0$. By using a suitable asymptotic equivalence version 
	(see Lemma 2.2 and Lemma 2.3 (i) in Anh et al. \cite{anh2021marcinkiewicz}), 
	we can firstly assume that $L(\cdot)$ is positive and
	differentiable on $[a,\infty)$, and $x^\alpha L(x)$ is strictly increasing on $[a,\infty)$  for some large $a$. Next, let
		$L_1(\cdot)$ be a slowly varying function satisfying $L_1(x)=L(a)x/a$ if $0\le x <a$ and $L_1(x)=L(x)$ if $x\ge a$ (i.e., 
		$L_1(0)=0$ with a linear growth to $L(a)$
		over $[0,a)$, and $L_1(x)\equiv L(x)$ on $[a,\infty)$).
		Then (i) $L_1(x)$ is continuous on $[0,\infty)$ and differentiable on $[a,\infty)$, and
		(ii) $x^\alpha L_1(x)$ is strictly increasing on $[0,\infty)$.
		In this paper, we will assume, without loss of generality, that these properties are fulfilled for the underlying slowly varying functions.

	The next theorem shows that bounded moment conditions 
	on an array of random variables $\{X_{n,i},1\le i\le k_n,n\ge1\}$
	with respect to weights $\{a_{n,i},1\le i\le k_n,n\ge1\}$ can accomplish $\{a_{n,i}\}$-stochastic domination.
	
	\begin{thm}\label{thm.sufficiency.for.stochastic.domination.2}
		Let $\{X_{n,i},1\le i\le k_n,n\ge1\}$ be an array of random variables,
		$\{a_{n,i},1\le i\le k_n,n\ge1\}$ an array of
		nonnegative
		real numbers satisfying \eqref{eq.weight01}. 
		Let $p>0$ and let $\nu$ be a fixed positive integer. Let $L(\cdot)$ be a 
		slowly varying function. If
		\begin{equation}\label{eq.stoch.domi.13}
			\sup_{n\ge 1}\sum_{i=1}^{k_n}a_{n,i}\E\left(|X_{n,i}|^pL(|X_{n,i}|)\log_{\nu}^{(2)}(|X_{n,i}|)\right)<\infty,
		\end{equation}
		then there exists a random variable $X$ 
		with distribution function 
		\begin{equation}\label{df11}
			F(x)=1-\dfrac{1}{C_0}\sup_{n\ge 1} \sum_{i=1}^{k_n}a_{n,i}\P(|X_{n,i}|>x),\ x\in\R
		\end{equation}
		such that $\{X_{n,i},1\le i\le k_n,n\ge1\}$ is $\{a_{n,i}\}$-stochastically dominated by $X$, and
		\begin{equation}\label{eq.stoch.domi.11}
			\E\left(|X|^pL(|X|)\right)<\infty.
		\end{equation}
	\end{thm}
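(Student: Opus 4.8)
The plan is to first reduce the existence of $X$ to an application of Theorem~\ref{thm.character.for.stochastic.domination.1}, and then to extract the moment bound \eqref{eq.stoch.domi.11} from a Markov-type tail estimate whose sharpness is governed precisely by the squared innermost logarithm in \eqref{notation2}. Set $\phi(s):=s^pL(s)\log_\nu^{(2)}(s)$ for $s\ge 0$. Since we may assume (as explained just before the statement) that $L$ is continuous on $[0,\infty)$ with $L(0)=0$ and that $s^pL(s)$ is strictly increasing, and since $\log_\nu^{(2)}(\cdot)$ is nondecreasing and bounded below by $1$, the function $\phi$ is nondecreasing and strictly positive on $(0,\infty)$. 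Writing $M$ for the finite supremum in \eqref{eq.stoch.domi.13} and $H(t):=\sup_{n\ge1}\sum_{i=1}^{k_n}a_{n,i}\P(|X_{n,i}|>t)$, I would apply Markov's inequality with the nondecreasing function $\phi$ to get $\P(|X_{n,i}|>t)\le \E(\phi(|X_{n,i}|))/\phi(t)$ for every $n,i$ and $t>0$, whence
\begin{equation*}
H(t)\le \frac{1}{\phi(t)}\sup_{n\ge1}\sum_{i=1}^{k_n}a_{n,i}\E\bigl(\phi(|X_{n,i}|)\bigr)=\frac{M}{t^pL(t)\log_\nu^{(2)}(t)}.
\end{equation*}

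Because $t^pL(t)\log_\nu^{(2)}(t)\to\infty$, this bound already forces $\lim_{t\to\infty}H(t)=0$, i.e. $\lim_{x\to\infty}F(x)=1$ for the function $F$ in \eqref{df11}. Theorem~\ref{thm.character.for.stochastic.domination.1} then guarantees that $F$ is the distribution function of a random variable $X$ and that $\{X_{n,i},1\le i\le k_n,n\ge1\}$ is $\{a_{n,i}\}$-stochastically dominated by $X$. In particular $H(t)=C_0\bigl(1-F(t)\bigr)=C_0\P(|X|>t)$, so $H$ is itself a constant multiple of a tail function, which is exactly the form needed for the next step.

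It remains to establish \eqref{eq.stoch.domi.11}. Using the layer-cake representation for the nonnegative, increasing, differentiable integrator $g(t):=t^pL(t)$ with $g(0)=0$, I would write
\begin{equation*}
\E\bigl(|X|^pL(|X|)\bigr)=\int_0^\infty \P(|X|>t)\,\dx g(t)=\frac{1}{C_0}\int_0^\infty H(t)\,\dx g(t).
\end{equation*}
The contribution of any bounded interval $[0,a]$ is at most $g(a)<\infty$, since $\P(|X|>t)\le 1$. For the tail I would combine the displayed estimate for $H(t)$ with the slow-variation bound $g'(t)=pt^{p-1}L(t)+t^pL'(t)\le Ct^{p-1}L(t)$, valid for large $t$ because $tL'(t)/L(t)\to 0$. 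This collapses the tail integral, up to a constant multiple of $M$, to
\begin{equation*}
\int_a^\infty \frac{t^{p-1}L(t)}{t^pL(t)\log_\nu^{(2)}(t)}\,\dx t=\int_a^\infty\frac{\dx t}{t\log_\nu^{(2)}(t)},
\end{equation*}
a Bertrand-type integral; recalling from \eqref{notation2} that the innermost iterated logarithm in $\log_\nu^{(2)}(\cdot)$ is \emph{squared}, this integral converges, and \eqref{eq.stoch.domi.11} follows.

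The step I expect to be most delicate is the bookkeeping for the slowly varying function: ensuring that the normalization of $L$ which renders $s^pL(s)$ strictly increasing and $L$ differentiable is simultaneously compatible with the Markov bound and with the derivative estimate $g'(t)\le Ct^{p-1}L(t)$, and checking that replacing $L$ by its normalized version changes neither \eqref{eq.stoch.domi.13} nor \eqref{eq.stoch.domi.11} (the two versions differ only on a bounded set, on which every quantity in sight is bounded). Everything else — the Markov estimate, the layer-cake identity, and the convergence of the Bertrand integral forced by the squared final logarithm of \eqref{notation2} — is routine.
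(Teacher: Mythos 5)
Your overall route is the same as the paper's: Markov's inequality with $\phi(s)=s^pL(s)\log_{\nu}^{(2)}(s)$ forces the dominating tail $H(t)$ to vanish, Theorem \ref{thm.character.for.stochastic.domination.1} then produces $X$ with distribution function \eqref{df11} and the domination property, and the moment bound \eqref{eq.stoch.domi.11} comes from a tail-integral representation of $\E\left(|X|^pL(|X|)\right)$ (your layer-cake identity is exactly the paper's Lemma \ref{lem.RT21}), the derivative estimate $g'(t)\le Ct^{p-1}L(t)$, and the convergence of the Bertrand integral $\int^\infty \frac{\dx t}{t\log_{\nu}^{(2)}(t)}$, whose finiteness is indeed what the squared innermost logarithm in \eqref{notation2} is for.

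The one genuine flaw is in the step you yourself flagged as delicate. Your derivative estimate needs $L$ differentiable with $tL'(t)/L(t)\to 0$, and you claim this can be arranged by a normalization of $L$ that "differs only on a bounded set" from the original, so that \eqref{eq.stoch.domi.13} and \eqref{eq.stoch.domi.11} are untouched. That is false in general: whether $tL'(t)/L(t)\to 0$ is a property of $L$ at infinity and cannot be created by altering $L$ on a bounded set. For example, $L(t)=\exp(\sin(t)/t)$ is a differentiable slowly varying function for which $tL'(t)/L(t)=\cos t-\sin(t)/t$ has no limit, and no bounded-set modification repairs this. The correct reduction, which is what the paper does, is the Galambos--Seneta theorem: there is a differentiable slowly varying $L_1$ with $tL_1'(t)/L_1(t)\to 0$ that is only \emph{asymptotically equivalent} to $L$, i.e. $L_1(t)/L(t)\to 1$, and $L_1$ may differ from $L$ on an unbounded set. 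One must then transfer the hypothesis \eqref{eq.stoch.domi.13} from $L$ to $L_1$ using $L_1(t)\le 2L(t)$ for large $t$ (splitting off the bounded part of each expectation, as in the paper's \eqref{approx.svf7}, which yields \eqref{eq.stoch.domi.16}), run your argument with $L_1$, and finally convert $\E\left(|X|^pL_1(|X|)\right)<\infty$ back into \eqref{eq.stoch.domi.11} by the same asymptotic equivalence. With this repair your proof is complete and coincides with the paper's.
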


	\begin{rem}
		{\rm 	A weaker version of Theorem \ref{thm.sufficiency.for.stochastic.domination.2}
			for stochastic domination (without the appearance of the slowly varying function $L(\cdot)$) 
			was proved by Rosalsky and Th\`{a}nh \cite{rosalsky2021note} (see Theorem 2.5 (ii) and (iii) in Rosalsky and Th\`{a}nh \cite{rosalsky2021note}).
			Typical examples of slowly varying functions $L(\cdot)$ for \eqref{eq.stoch.domi.13} are $L(x)\equiv1$ and
			$L(x)\equiv L_1(x)(\log_{\nu}^{(2)} (x))^{-1}$, where $L_1(\cdot)$
			is another slowly varying function.
			Theorem \ref{thm.sufficiency.for.stochastic.domination.2} is proved by employing an idea from Galambos and Seneta \cite{galambos1973regularly}.
			
	}\end{rem}
	
	Before proving Theorem \ref{thm.sufficiency.for.stochastic.domination.2}, we recall a simple result on the expectation of a nonnegative random variable, see Rosalsky and Th\`{a}nh \cite{rosalsky2021note}
	for a proof.
	
	\begin{lem}\label{lem.RT21}
		Let $h:[0,\infty)\to [0,\infty)$ be a measurable function with $h(0)=0$ which is
		bounded on $[0,A]$ and differentiable on $[A,\infty)$ for some $A\ge 0$.
		If $\xi$ is a nonnegative random variable, then
		\begin{equation*}
			\begin{split}
				\E(h(\xi))&=\E(h(\xi)\mathbf{1}(\xi\le A))+ h(A)+\int_{A}^\infty h'(x)\P(\xi>x)\dx x.
			\end{split}
		\end{equation*}
	\end{lem}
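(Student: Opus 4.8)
The statement to prove is Lemma~\ref{lem.RT21}: for a nonnegative random variable $\xi$ and a function $h$ with $h(0)=0$ that is bounded on $[0,A]$ and differentiable on $[A,\infty)$, we have the identity
\[
\E(h(\xi))=\E(h(\xi)\mathbf{1}(\xi\le A))+h(A)\P(\xi>A)+\int_A^\infty h'(x)\P(\xi>x)\dx x.
\]
(Here I read the $h(A)$ term as $h(A)\P(\xi>A)$, which is what the splitting below produces; if the intended reading is the bare constant $h(A)$, the argument is the same up to the trivial adjustment.) The natural approach is to split $\E(h(\xi))$ according to whether $\xi\le A$ or $\xi>A$, leave the first piece as is, and represent the second piece via the fundamental theorem of calculus followed by a Fubini (Tonelli) interchange.

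\textbf{Key steps.} First I would write $\E(h(\xi))=\E(h(\xi)\mathbf{1}(\xi\le A))+\E(h(\xi)\mathbf{1}(\xi>A))$, which is legitimate because $h$ is measurable and nonnegative so all expectations are well defined in $[0,\infty]$. Next, on the event $\{\xi>A\}$ I would use differentiability of $h$ on $[A,\infty)$ to write $h(\xi)=h(A)+\int_A^\xi h'(t)\dx t$, so that
\[
\E\bigl(h(\xi)\mathbf{1}(\xi>A)\bigr)=h(A)\P(\xi>A)+\E\!\left(\mathbf{1}(\xi>A)\int_A^\xi h'(t)\dx t\right).
\]
The remaining step is to rewrite the inner integral as $\int_A^\infty h'(t)\mathbf{1}(t<\xi)\dx t$ and apply Tonelli's theorem to interchange the expectation and the $t$-integral:
\[
\E\!\left(\int_A^\infty h'(t)\mathbf{1}(t<\xi)\dx t\right)=\int_A^\infty h'(t)\,\E\bigl(\mathbf{1}(t<\xi)\bigr)\dx t=\int_A^\infty h'(t)\P(\xi>t)\dx t,
\]
and collecting the three pieces gives the claimed identity.

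\textbf{Main obstacle.} The delicate point is justifying the Fubini/Tonelli interchange, since $h'$ need not be nonnegative and the integrals could be infinite. If $h'\ge0$ on $[A,\infty)$ (equivalently $h$ nondecreasing there), Tonelli's theorem applies directly with no integrability hypothesis, and the identity holds as an equality in $[0,\infty]$. In the general case one would argue on $\{h(\xi)<\infty\}$ and invoke absolute integrability of the double integral — or, more cleanly, observe that the decomposition $h(\xi)\mathbf{1}(\xi>A)=\bigl(h(A)+\int_A^\xi h'\bigr)\mathbf{1}(\xi>A)$ is a genuine pointwise identity, so that the interchange is permitted precisely when $\E(h(\xi))<\infty$, in which case every term is finite and the manipulation is valid. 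Since the lemma is applied in this paper only to nonnegative $h$ arising from tail computations, and is cited directly from Rosalsky and Th\`{a}nh \cite{rosalsky2021note}, it suffices to record the interchange under the monotone (Tonelli) regime and remark that the finite-expectation case follows identically.
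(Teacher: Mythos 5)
Your argument is the standard and correct one, but note that there is nothing in the paper to compare it against: the paper states this lemma without proof, deferring to Rosalsky and Th\`{a}nh \cite{rosalsky2021note}, and the splitting at $A$ followed by the fundamental theorem of calculus and a Tonelli interchange is precisely the intended argument. The more valuable point is the discrepancy you flag parenthetically, which is real and deserves to be stated plainly: the identity your derivation produces, with the term $h(A)\P(\xi>A)$, is the correct one, whereas the version printed in the lemma with the bare constant $h(A)$ is false as an equality whenever $h(A)>0$ and $\P(\xi>A)<1$. For instance, with $h(x)=x$, $A=1$ and $\xi\equiv 1/2$, the left side is $1/2$ while the printed right side is $1/2+1+0=3/2$. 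Since $h\ge 0$ gives $h(A)\P(\xi>A)\le h(A)$, the printed version survives as the one-sided bound
\begin{equation*}
\E(h(\xi))\le\E(h(\xi)\mathbf{1}(\xi\le A))+h(A)+\int_{A}^\infty h'(x)\P(\xi>x)\dx x,
\end{equation*}
and this upper bound is all the paper ever uses: in the proof of Theorem \ref{thm.sufficiency.for.stochastic.domination.2} the terms $\E(h(X)\mathbf{1}(X\le B))+h(B)$ are immediately absorbed into the constant $C_1$, so the misprint is harmless there (though the displayed ``$=$'' in that proof should likewise be ``$\le$''). Two small refinements to your justification of the interchange: first, in the regime where the lemma is actually applied, $h(x)=x^pL(x)$ is taken (after the paper's normalization of slowly varying functions) to be increasing, and a function that is differentiable everywhere on $[A,\infty)$ with $h'\ge0$ is continuous, locally of bounded variation, and has the Luzin $N$ property, hence is absolutely continuous by the Banach--Zarecki theorem; this makes both your fundamental-theorem step and Tonelli unconditionally valid, closing the gap you worried about between ``differentiable'' and ``absolutely continuous.'' Second, your claim that in the signed case the interchange is permitted ``precisely when $\E(h(\xi))<\infty$'' is slightly too strong, since Fubini requires $\int_{A}^\infty|h'(t)|\P(\xi>t)\dx t<\infty$, which does not follow from finiteness of $\E(h(\xi))$ for oscillating $h'$; but as the paper invokes the lemma only for eventually monotone $h$, this does not affect any application.
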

	
	\noindent\textit{Proof of Theorem \ref{thm.sufficiency.for.stochastic.domination.2}.}
	Set
	\[g(x)=x^pL(x)\log_{\nu}^{(2)}(x),\ \text{ and }h(x)=x^p L(x),\ x\ge 0.\]
	Since $\lim_{x\to\infty}g(x)=\infty$ and
	$g(\cdot)$ is strictly increasing on $[0,\infty)$ as we have assumed before, we have from Markov's inequality and \eqref{eq.stoch.domi.13} that
	\[0\le\lim_{x\to\infty}\sup_{n\ge 1} \sum_{i=1}^{k_n}a_{n,i}\P(|X_{n,i}|>x)\le \lim_{x\to\infty}\dfrac{1}{g(x)}\sup_{n\ge 1} \sum_{i=1}^{k_n}a_{n,i}\E(g(|X_{n,i}|))=0.\] 
	By Theorem \ref{thm.character.for.stochastic.domination.1}, the array $\{X_{n,i},1\le i\le k_n,n\ge1\}$ is 
	$\{a_{n,i}\}$-stochastically dominated by a random variable $X$
	with distribution function $F(\cdot)$ given in \eqref{df11}. 
	
	Next, we prove \eqref{eq.stoch.domi.11}. We firstly consider the case where the slowly varying function $L(\cdot)$ is differentiable on an infinite interval far enough from $0$, and
	\begin{equation}\label{approx.svf}
		\lim_{x\to \infty}\dfrac{xL'(x)}{L(x)}=0.
	\end{equation}
By \eqref{approx.svf}, there exists $B\ge 0$ such that 
	\[\left|\dfrac{xL'(x)}{L(x)}\right|\le \dfrac{p}{2},\ x>B.\]
It follows that
	\begin{equation}\label{eq.st.10}
		h'(x)=px^{p-1}L(x)+x^pL'(x)=x^{p-1}L(x)\left(p+\dfrac{xL'(x)}{L(x)}\right)\le \dfrac{3px^{p-1}L(x)}{2},\ x\ge B.
	\end{equation}
	Therefore, 
	there exists a constant $C_1$ such that
	\begin{equation*}
		\begin{split}
			\E(h(X))&=\E(h(X)\mathbf{1}(X\le B))+h(B)+\int_{B}^\infty h'(x)\P(X>x)\mathrm{d} x\\
			&\le C_1+\dfrac{3p}{2}\int_{B}^\infty x^{p-1}L(x)\P(X>x)\mathrm{d} x\\
			&= C_1+\dfrac{3p}{2C_0}\int_{B}^\infty x^{p-1}L(x)\sup_{n\ge1}\sum_{i=1}^{k_n}a_{n,i}\P(|X_{n,i}|>x)\mathrm{d} x\\
			&\le C_1+\dfrac{3p}{2C_0}\int_{B}^\infty \dfrac{1}{x\log_{\nu}^{(2)}(x)}\sup_{n\ge1}\sum_{i=1}^{k_n}a_{n,i}\E\left(g(|X_{n,i}|)\right) \mathrm{d} x\\
			&= C_1+\dfrac{3p}{2C_0}\sup_{n\ge1}\sum_{i=1}^{k_n}a_{n,i}\E\left(g(|X_{n,i}|)\right)\int_{B}^\infty \dfrac{\mathrm{d} x}{x\log_{\nu}^{(2)}(x)}<\infty,
		\end{split}
	\end{equation*}
	where we have applied Lemma \ref{lem.RT21} in the first equality,
	\eqref{eq.st.10} in the first inequality, Markov's inequality in the second inequality, and
	\eqref{eq.stoch.domi.13} in the last inequality. Thus we obtain \eqref{eq.stoch.domi.11} in this case.
	
	For general slowly varying function $L(\cdot)$, by a result from page 111 of Galambos and Seneta \cite{galambos1973regularly}, there exists
	a slowly varying function $L_1(\cdot)$ which is differentiable on $[B_1,\infty)$ for some $B_1$ large enough, and satisfies
	\begin{equation}\label{approx.svf3}
		\lim_{x\to \infty}\dfrac{L_1(x)}{L(x)}=1
	\end{equation}
	and
	\begin{equation*}\label{approx.svf5}
		\lim_{x\to \infty}\dfrac{xL_{1}'(x)}{L_{1}(x)}=0.
	\end{equation*}
	For $n\ge1$, $1\le i\le n$, we have from \eqref{approx.svf3} that for all $B_2$ large enough
	\begin{equation}\label{approx.svf7}
		\begin{split}
			&\E\left(|X_{n,i}|^pL_1(|X_{n,i}|)\log_{\nu}^{(2)}(|X_{n,i}|)\right)\\
			&\qquad=\E\left(|X_{n,i}|^pL_1(|X_{n,i}|)\log_{\nu}^{(2)}(|X_{n,i}|)\mathbf{1}(|X_{n,i}|\le B_2)\right)\\
			&\quad\qquad+ \E\left(|X_{n,i}|^pL_1(|X_{n,i}|)\log_{\nu}^{(2)}(|X_{n,i}|)\mathbf{1}(|X_{n,i}|> B_2)\right)\\
			&\qquad\le C_2+ 2\E\left(|X_{n,i}|^pL(|X_{n,i}|)\log_{\nu}^{(2)}(|X_{n,i}|)\mathbf{1}(|X_{n,i}|> B_2)\right),
		\end{split}
	\end{equation}
	where $C_2$ is a finite constant. Combining \eqref{eq.stoch.domi.13} and \eqref{approx.svf7} yields
	\begin{equation}\label{eq.stoch.domi.16}
		\sup_{n\ge 1}\sum_{i=1}^{k_n}a_{n,i}\E\left(|X_{n,i}|^pL_1(|X_{n,i}|)\log_{\nu}^{(2)}(|X_{n,i}|)\right)<\infty.
	\end{equation}
	Proceeding exactly the same manner as the first case with $L(\cdot)$ is replaced by $L_1(\cdot)$, we obtain \eqref{eq.stoch.domi.11}.
	The proof of the theorem is completed.
	\qed
	
	\vskip.1in
	The following corollary follows immediately from Theorem \ref{thm.sufficiency.for.stochastic.domination.2}.
	
	\begin{cor}\label{cor.character.for.stochastic.domination.Gut3}
		Let $\{X_{n,i},1\le i\le k_n,n\ge1\}$ be an array of random variables and let $L(\cdot)$ be a 
		slowly varying function. 
		Let $p>0$ and let $\nu$ be a fixed positive integer. If
		\begin{equation*}\label{eq.stoch.domi.Cesaro13}
			\sup_{n\ge 1}\dfrac{1}{k_n}\sum_{i=1}^{k_n}\E\left(|X_{n,i}|^pL(|X_{n,i}|)\log_{\nu}^{(2)} (|X_{n,i}|)\right)<\infty,
		\end{equation*}
		then there exists a random variable $X$ 
		with distribution function 
		\[			F(x)=1-\sup_{n\ge 1} \dfrac{1}{k_n}\sum_{i=1}^{k_n}\P(|X_{n,i}|>x),\ x\in\R\]
		such that $\{X_{n,i},1\le i\le k_n,n\ge1\}$ is stochastically dominated in the Ces\`{a}ro sense by $X$, and
		\begin{equation*}\label{eq.stoch.domi.Cesaro11}
			\E\left(|X|^pL(|X|)\right)<\infty.
		\end{equation*}
		
	\end{cor}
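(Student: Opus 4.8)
The plan is to obtain this corollary as the special case $a_{n,i}=1/k_n$ of Theorem \ref{thm.sufficiency.for.stochastic.domination.2}. With this choice of weights we have $\sum_{i=1}^{k_n}a_{n,i}=1$ for every $n\ge1$, so condition \eqref{eq.weight01} holds with $C_0=1\in(0,\infty)$. Substituting $a_{n,i}=1/k_n$ into the moment hypothesis \eqref{eq.stoch.domi.13} of Theorem \ref{thm.sufficiency.for.stochastic.domination.2} produces exactly the assumption of the corollary, so the theorem applies verbatim.

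First I would record what the theorem delivers. It yields a random variable $X$ whose distribution function is \eqref{df11}, namely
\[F(x)=1-\dfrac{1}{C_0}\sup_{n\ge1}\sum_{i=1}^{k_n}a_{n,i}\P(|X_{n,i}|>x),\ x\in\R.\]
Since $C_0=1$ and $a_{n,i}=1/k_n$, this collapses precisely to the function $F(\cdot)$ displayed in the corollary. Moreover, the theorem guarantees the moment bound \eqref{eq.stoch.domi.11}, that is $\E(|X|^pL(|X|))<\infty$, which is the second assertion sought.

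It remains only to translate the $\{a_{n,i}\}$-stochastic domination conclusion into Ces\`{a}ro stochastic domination. By \eqref{eq.stoch.domi.07} with $C_0=1$ and $a_{n,i}=1/k_n$, the array satisfies
\[\sup_{n\ge1}\dfrac{1}{k_n}\sum_{i=1}^{k_n}\P(|X_{n,i}|>x)\le\P(|X|>x),\ x\in\R,\]
which is exactly condition \eqref{eq.Gut05.stoch.domi} defining stochastic domination in the Ces\`{a}ro sense (with dominating variable $X$ and constant $1$). This is the identification already recorded in the text immediately after \eqref{eq.Gut05.stoch.domi}, so no genuine obstacle arises: the entire content of the corollary is bookkeeping on the specialization $a_{n,i}=1/k_n$, and the only point worth stating explicitly is that this choice forces $C_0=1$, which is what makes $F(\cdot)$ a bona fide distribution function and removes the constant $C$ from the domination inequality.
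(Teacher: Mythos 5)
Your proposal is correct and coincides with the paper's own (implicit) proof: the paper states that this corollary ``follows immediately from Theorem \ref{thm.sufficiency.for.stochastic.domination.2}'' by the very specialization $a_{n,i}=1/k_n$, $C_0=1$ that you carry out. Your write-up just makes explicit the bookkeeping (that \eqref{eq.weight01} holds with $C_0=1$, that \eqref{df11} collapses to the stated $F$, and that $\{a_{n,i}\}$-domination becomes \eqref{eq.Gut05.stoch.domi}) that the paper leaves to the reader.
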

	
	\section{Relationships between $\{a_{n,i}\}$-stochastic domination and $\{a_{n,i}\}$-uniform integrability}\label{sec:relation}
	
	The concept of $\{a_{n,i}\}$-uniform integrability was introduced by Ord{\'o}{\~n}ez Cabrera \cite{ordonezcabrera1994convergence}.
	Let 
	$\{a_{n,i},1\le i\le k_n,n\ge1\}$ be an array of
	nonnegative
	real numbers satisfying \eqref{eq.weight01}. 
	An array $\{X_{n,i},1\le i\le k_n,n\ge1\}$ of random variables is said to
	be \textit{$\{a_{n,i}\}$-uniformly integrable} if
	\begin{equation*}\label{ui.eq03}
		\lim_{a\to\infty}\sum_{i=1}^{k_n}a_{n,i}\E(|X_{n,i}|\mathbf{1}(|X_{n,i}|>a))=0.
	\end{equation*}
	Similar to the classical characterization of the uniform integrability, it
	was proved by Ord{\'o}{\~n}ez Cabrera \cite{ordonezcabrera1994convergence} that an
	array of random variables
	$\{X_{n,i},1\le i\le k_n,n\ge1\}$ is
	$\{a_{n,i}\}$-uniformly integrable
	if and only if
	\[\sup_{n\ge 1}\sum_{i=1}^{k_n}a_{n,i}\E(|X_{n,i}|)<\infty\] 
	and for each $\varepsilon>0$, there exists $\delta>0$ such that whenever $\{A_{n,i},1\le i\le k_n,n\ge1\}$ is
	an array of events satisfying
	\[\sup_{n\ge 1}\sum_{i=1}^{k_n}a_{n,i}\P(A_{n,i})<\delta,\] 
	then
	\[\sup_{n\ge 1}\sum_{i=1}^{k_n}a_{n,i}\E(|X_{n,i}|\mathbf{1}(A_{n,i}))<\varepsilon. \] 
	If $a_{n,i}=1/k_n, 1\le i\le k_n,\ n\ge1,$	then it reduces to the concept of $\{X_{n,i},1\le i\le k_n,n\ge1\}$ being 
	\textit{uniformly integrable in the Ces\`{a}ro sense}
	which was introduced in \cite{chandra1989uniform}.
	The de La Vall\'{e}e--Poussin criterion for uniform integrability in the Ces\`{a}ro sense, and
	for $\{a_{n,i}\}$-uniform integrability was proved, respectively, by Chandra and Goswami \cite{chandra1992cesaro}, and
	Ord{\'o}{\~n}ez Cabrera \cite{ordonezcabrera1994convergence}. The former is a special case of the latter,
	which reads as follows:
	An array of random variables
	$\{X_{n,i},1\le i\le k_n,n\ge1\}$ is
	$\{a_{n,i}\}$-uniformly integrable
	if and only if there exists a measurable
	function $g: [0,\infty)\to [0,\infty)$ with $g(0) = 0$, $g(x)/x\to\infty$ as $x\to\infty$, and
	\[\sup_{n\ge 1} \sum_{i=1}^{k_n}a_{n,i}\E(g(|X_{n,i}|))<\infty.\] 
	Moreover, $g$ can be selected to be convex and such that $g(x)/x$ is nondecreasing.

	The next theorem establishes relationships between the concept of $\{a_{n,i}\}$-stochastic domination and the concept of $\{a_{n,i}\}$-uniform integrability.
	
	\begin{thm}\label{thm.sufficiency.for.stochastic.domination.5}
		Let $\{X_{n,i},1\le i\le k_n,n\ge1\}$ be an array of random variables and let
		$\{a_{n,i},1\le i\le k_n,n\ge1\}$ be an array of
		nonnegative
		real numbers satisfying \eqref{eq.weight01}. 
		Let $p>0$ and let $\tilde{L}(\cdot)$ be the Brujin conjugate of a 
		slowly varying function $L(\cdot)$.
		\begin{description}
			\item[(i)] If $\{X_{n,i},1\le i\le k_n,n\ge1\}$ is $\{a_{n,i}\}$-stochastically dominated by a random variable
			$X$ with $\E(|X|^pL(|X|^p))<\infty$, then $\{|X_{n,i}|^pL(|X_{n,i}|^p),1\le i\le k_n,n\ge1\}$ is $\{a_{n,i}\}$-uniformly integrable.		
			\item[(ii)] If $\{|X_{n,i}|^pL(|X_{n,i}|^p),1\le i\le k_n,n\ge1\}$ is $\{a_{n,i}\}$-uniformly integrable, then
			there exists a
			random variable $X$ with distribution function 
			\[F(x)=1-\dfrac{1}{C_0}\sup_{n\ge1} \sum_{i=1}^{k_n}a_{n,i}\P(|X_{n,i}|>x), x\in \R, \]
			such that $\{X_{n,i},1\le i\le k_n,n\ge1\}$ is $\{a_{n,i}\}$-stochastically dominated by $X$,
			\begin{equation}\label{eq.stoch.domi.14}
				\E\left(|X|^pL(|X|^p)(\log_{\nu}^{(2)} (|X|))^{-1}\right)<\infty \text{ for all fixed positive integer }\nu,
			\end{equation}
			and
			\begin{equation}\label{eq.stoch.domi.15}
				\lim_{x\to\infty}x\mathbb{P}\left(|X|>x^{1/p}\tilde{L}^{1/p}(x)\right)=0.
			\end{equation}
		\end{description}
	\end{thm}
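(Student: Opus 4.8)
The plan is to handle the two implications separately, reducing both to the single, eventually strictly increasing function $g(x)=x^pL(x^p)$ (which is strictly increasing on $[0,\infty)$ under the standing regularity reduction for slowly varying functions recorded before the theorem) and transporting tail information through the change of variables $x\mapsto g(x)$.

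For (i), since $g$ is strictly increasing with $g(0)=0$, the identity $\P(g(|X_{n,i}|)>t)=\P(|X_{n,i}|>g^{-1}(t))$ shows at once that the $\{a_{n,i}\}$-stochastic domination of $\{X_{n,i}\}$ by $X$ transfers to domination of the array $\{g(|X_{n,i}|)\}$ by $g(|X|)$. I would then repeat the integration-by-parts computation behind \eqref{eq.Gut09.stoch.domi}, now for the weights $a_{n,i}$ and an arbitrary truncation level $a$, using $\E(Z\mathbf{1}(Z>a))=a\P(Z>a)+\int_a^\infty\P(Z>t)\dx t$ for nonnegative $Z$, to obtain
\[\sup_{n\ge1}\sum_{i=1}^{k_n}a_{n,i}\E\bigl(g(|X_{n,i}|)\mathbf{1}(g(|X_{n,i}|)>a)\bigr)\le C_0\,\E\bigl(g(|X|)\mathbf{1}(g(|X|)>a)\bigr).\]
As $\E(g(|X|))=\E(|X|^pL(|X|^p))<\infty$, the right-hand side vanishes as $a\to\infty$, which is precisely $\{a_{n,i}\}$-uniform integrability of $\{|X_{n,i}|^pL(|X_{n,i}|^p)\}$. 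This direction is routine.

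For (ii), I would extract from $\{a_{n,i}\}$-uniform integrability only the elementary consequence $M:=\sup_{n}\sum_i a_{n,i}\E(g(|X_{n,i}|))<\infty$. Since $g(x)\to\infty$, Markov's inequality gives $\sup_n\sum_i a_{n,i}\P(|X_{n,i}|>x)\le M/g(x)\to0$, so Theorem \ref{thm.character.for.stochastic.domination.1} delivers the (nonnegative) random variable $X$ with the prescribed distribution function and the asserted domination, and in particular $\P(|X|>x)\le M/(C_0 g(x))$. To prove \eqref{eq.stoch.domi.14}, put $h(x)=g(x)/\log_\nu^{(2)}(x)$ and apply Lemma \ref{lem.RT21}. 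The slowly varying estimate $xL'(x)/L(x)\to0$ yields $g'(x)/g(x)\le 2p/x$ for large $x$ exactly as in \eqref{eq.st.10}, and since $(\log_\nu^{(2)})'\ge0$ one gets $h'(x)/g(x)\le 2p/\bigl(x\log_\nu^{(2)}(x)\bigr)$; hence
\[\E(h(|X|))\le h(A)+\int_A^\infty h'(x)\P(|X|>x)\dx x\le h(A)+\frac{2pM}{C_0}\int_A^\infty\frac{\dx x}{x\log_\nu^{(2)}(x)}<\infty,\]
the final integral converging precisely because the last iterated logarithm in $\log_\nu^{(2)}$ carries the exponent $2$.

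The delicate step is \eqref{eq.stoch.domi.15}. Setting $y=x^{1/p}\tilde{L}^{1/p}(x)$, so that $y^p=x\tilde{L}(x)$ and $g(y)=x\cdot\tilde{L}(x)L\bigl(x\tilde{L}(x)\bigr)$, the defining relation \eqref{BGT1513} of the de Bruijn conjugate gives $g(y)\sim x$; hence $x\,\P(|X|>y)\sim g(y)\,\P(|X|>y)$ and it suffices to prove $g(y)\,\P(|X|>y)\to0$. Expressing this through the domination and applying $t\P(Z>t)\le\E(Z\mathbf{1}(Z>t))$ with $Z=g(|X_{n,i}|)$ and $t=g(y)$, I obtain
\[g(y)\,\P(|X|>y)\le\frac{1}{C_0}\sup_{n\ge1}\sum_{i=1}^{k_n}a_{n,i}\E\bigl(g(|X_{n,i}|)\mathbf{1}(g(|X_{n,i}|)>g(y))\bigr),\]
and the right-hand side tends to $0$ as $y\to\infty$ directly from the definition of $\{a_{n,i}\}$-uniform integrability. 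The main obstacle throughout is the slowly varying bookkeeping: justifying that $g$ may be taken strictly increasing, that $g(y)\sim x$ via \eqref{BGT1513}, and that the derivative bounds hold on a common half-line, all of which rest on the regularity reductions for $L$ recorded before the theorem.
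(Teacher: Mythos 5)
Your proof is correct in substance, but it follows a genuinely different route from the paper's, and the difference is worth recording. The paper proves both halves through the de La Vall\'{e}e Poussin criterion: for (i) it extracts from $\E(|X|^pL(|X|^p))<\infty$ a convex-type function $h$ with $h(x)/x\to\infty$, pushes $h\circ f$ (with $f(x)=x^pL(x^p)$) through the domination, and then invokes Ord\'{o}\~{n}ez Cabrera's criterion to conclude $\{a_{n,i}\}$-uniform integrability; for \eqref{eq.stoch.domi.15} it again produces such an $h$ from uniform integrability and runs a chain of Markov-type inequalities ending with $x/h(x/2)\to 0$. You bypass the de La Vall\'{e}e Poussin criterion entirely in both directions: in (i) you verify the definition of $\{a_{n,i}\}$-uniform integrability directly from the truncated tail-integral identity $\E(Z\mathbf{1}(Z>a))=a\P(Z>a)+\int_a^\infty \P(Z>t)\dx t$ applied to $Z=g(|X_{n,i}|)$, and for \eqref{eq.stoch.domi.15} you use the elementary bound $t\P(Z>t)\le \E(Z\mathbf{1}(Z>t))$ together with the definition of uniform integrability at the levels $t=g(y)\to\infty$. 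Both arguments are shorter and more elementary than the paper's, and you correctly isolate where the full strength of uniform integrability (rather than mere boundedness of $\sup_n\sum_i a_{n,i}\E(g(|X_{n,i}|))$, which would only give $O(1)$ in place of $o(1)$) is actually needed. The de Bruijn bookkeeping is the same in both proofs: you read $f(g(x))\sim x$ off \eqref{BGT1513} directly, where the paper cites Lemma 2.1 of Anh et al.

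One caveat in part (ii). Your derivation of \eqref{eq.stoch.domi.14} invokes the derivative estimate $xL'(x)/L(x)\to 0$ ``exactly as in \eqref{eq.st.10}'', attributing it to the regularity reductions recorded before the theorem. Those standing reductions only give positivity, continuity, differentiability on a half-line, and strict monotonicity of $x^\alpha L(x)$; the condition $xL'(x)/L(x)\to 0$ is \emph{not} among them, and for a general slowly varying $L$ it is obtained in the paper only via the Galambos--Seneta approximation argument carried out inside the proof of Theorem \ref{thm.sufficiency.for.stochastic.domination.2} (replacing $L$ by an asymptotically equivalent differentiable $L_1$ and checking the moment hypothesis transfers). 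The cleanest repair is to do what the paper's own proof of (ii) does: apply Theorem \ref{thm.sufficiency.for.stochastic.domination.2} with the slowly varying function $x\mapsto L(x^p)(\log_{\nu}^{(2)}(x))^{-1}$, whose hypothesis \eqref{eq.stoch.domi.13} is exactly your bound $M<\infty$ and whose conclusion is the domination together with \eqref{eq.stoch.domi.14}. With that substitution (or with the Galambos--Seneta step made explicit), your argument is complete.
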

	
	\begin{proof}
		Let $f(x)=x^pL(x^p),\ g(x)=x^{1/p}\tilde{L}^{1/p}(x),\ x\ge0$. 
		Recalling that we assume, without loss of generality, that $f$ and $g$ are strictly increasing on $[0,\infty)$.
		
		(i)
		Since $\E(|X|^pL(|X|^p))<\infty$, it follows from the classical
		de La Vall\'{e}e Poussin criterion for uniform integrability
		that there exists a continuous and strictly increasing function
		$h:[0,\infty)\to [0,\infty)$ with $h(0)=0$, $\lim_{x\to\infty}h(x)/x=\infty$,
		and $\E(h(|X|^pL(|X|^p)))<\infty.$ Since $f(x)$ is strictly increasing on $[0,\infty)$, 
		the $\{a_{n,i}\}$-stochastic domination assumption ensures that for all $n\ge1$,
		\[\sum_{i=1}^{k_n}a_{n,i}\P(|X_{n,i}|>f^{-1}(h^{-1}(x)))\le C_0 \P(|X|>f^{-1}(h^{-1}(x))),\ x\in\R\]
		or, equivalently,
		\[\sum_{i=1}^{k_n}a_{n,i}\P(|X_{n,i}|^pL(|X_{n,i}|^p)>h^{-1}(x))\le C_0 \P(|X|^pL(|X|^p)>h^{-1}(x)),\ x\in\R\]
		which, in turn, is equivalent to
		\[\sum_{i=1}^{k_n}a_{n,i}\P(h(|X_{n,i}|^pL(|X_{n,i}|^p))>x)\le C_0 \P(h(|X|^pL(|X|^p))>x),\ x\in\R.\]
		It follows that
		\begin{equation*}
			\begin{split}
				\sup_{n\ge 1} \sum_{i=1}^{k_n}a_{n,i}\E(h(|X_{n,i}|^pL(|X_{n,i}|^p)))&=\sup_{n\ge 1} \sum_{i=1}^{k_n}a_{n,i}\int_{0}^{\infty}\P(h(|X_{n,i}|^pL(|X_{n,i}|^p))>x)\dx x\\
				&\le C_0\int_{0}^{\infty}\P(h(|X|^pL(|X|^p))>x)\dx x\\
				&=C_0 \E(h(|X|^pL(|X|^p)))<\infty.
			\end{split}
		\end{equation*}
		By the de La Vall\'{e}e Poussin criterion for $\{a_{n,i}\}$-uniform integrability
		(Ord{\'o}{\~n}ez Cabrera \cite{ordonezcabrera1994convergence}),
		$\{|X_{n,i}|^pL(|X_{n,i}|^p),1\le i\le k_n,n\ge1\}$ is $\{a_{n,i}\}$-uniformly integrable.

		(ii) Since $\{|X_{n,i}|^pL(|X_{n,i}|^p),1\le i\le k_n,n\ge1\}$ is $\{a_{n,i}\}$-uniformly integrable,
		\[\sup_{n\ge 1}\sum_{i=1}^{k_n}a_{n,i}\E\left(|X_{n,i}|^pL(|X_{n,i}|^p)\right)<\infty,\] 
		and so by Theorem \ref{thm.sufficiency.for.stochastic.domination.2},
		$\{X_{n,i},1\le i\le k_n,n\ge1\}$ is $\{a_{n,i}\}$-stochastically dominated by a
		random variable $X$ with
		distribution function
		\[F(x)=1-\dfrac{1}{C_0}\sup_{n\ge 1} \sum_{i=1}^{k_n}a_{n,i}\P(|X_{n,i}|>x),\ x\in\R,\]
		and \eqref{eq.stoch.domi.14} holds.
		
		Finally, by using the de La Vall\'{e}e Poussin criterion for $\{a_{n,i}\}$-uniform integrability again, 
		there exists a nondecreasing function $h$ defined on $[0,\infty)$ with $h(0)=0$ such that
		\begin{equation}\label{eq.ui.domi.11}
			\lim_{x\to\infty}\dfrac{h(x)}{x}=\infty,
		\end{equation}
		and
		\begin{equation}\label{eq.ui.domi.12}
			\sup_{n\ge 1}\sum_{i=1}^{k_n}a_{n,i}\E(h(|X_{n,i}|^pL(|X_{n,i}|^p)))<\infty.
		\end{equation}
		By applying Lemma 2.1 in Anh et al. \cite{anh2021marcinkiewicz}, we have $f(g(x))/x\to 1$ as $x\to\infty$,
		and therefore
		\begin{equation}\label{ui.eq07}
			f(g(x))>x/2 \text{ for all large }x.
		\end{equation}
		We thus have from \eqref{eq.ui.domi.11}, \eqref{eq.ui.domi.12}, \eqref{ui.eq07} and Markov's inequality that
		\begin{equation*}
			\begin{split}
				\lim_{x\to\infty}x\mathbb{P}\left(|X|>g(x)\right)
				&=\dfrac{1}{C_0}\lim_{x\to\infty}x\sup_{n\ge 1}\sum_{i=1}^{k_n}a_{n,i}\mathbb{P}(|X_{n,i}|>g(x))\\
				&\le \dfrac{1}{C_0}\lim_{x\to\infty}x\sup_{n\ge 1}\sum_{i=1}^{k_n}a_{n,i}\mathbb{P}(f(|X_{n,i}|)\ge f(g(x)))\\
				&\le \dfrac{1}{C_0}\lim_{x\to\infty}x\sup_{n\ge 1}\sum_{i=1}^{k_n}a_{n,i}\mathbb{P}(f(|X_{n,i}|)\ge x/2)\\
				&\le \dfrac{1}{C_0}\lim_{x\to\infty}x\sup_{n\ge 1}\sum_{i=1}^{k_n}a_{n,i}\mathbb{P}(h(f(|X_{n,i}|))\ge h(x/2))\\
				&\le \dfrac{1}{C_0}\lim_{x\to\infty}x\sup_{n\ge 1}\sum_{i=1}^{k_n}a_{n,i}\dfrac{\E(h(f(|X_{n,i}|)))}{h(x/2)}\\
				&=\dfrac{2}{C_0}\sup_{n\ge 1}\sum_{i=1}^{k_n}a_{n,i}\E(h(f(|X_{n,i}|)))\lim_{x\to\infty}\dfrac{x/2}{h(x/2)}=0,
			\end{split}
		\end{equation*}
		thereby proving \eqref{eq.stoch.domi.15}.
		\qed
	\end{proof}
	
	The following corollary is a direct consequence of Theorem \ref{thm.sufficiency.for.stochastic.domination.5}.
	It plays an important role in establishing the weak laws of large numbers with general normalizing sequences
	under the Ces\`{a}ro uniform integrability condition in Section \ref{sec:proofs}. 
	
	\begin{cor}\label{cor.Cesaro.5}
		Let $\{X_{n,i},1\le i\le k_n,n\ge1\}$ be an array of random variables.
		Let $p>0$ and let $L(\cdot)$ be a 
		slowly varying function.
		\begin{description}
			\item[(i)] If $\{X_{n,i},1\le i\le k_n,n\ge1\}$ is stochastically dominated in the Ces\`{a}ro sense by a random variable
			$X$ with $\E(|X|^pL(|X|^p))<\infty$, then $\{|X_{n,i}|^pL(|X_{n,i}|^p),1\le i\le k_n,n\ge1\}$ is
			uniformly integrable in the Ces\`{a}ro sense.		
			\item[(ii)] If $\{|X_{n,i}|^pL(|X_{n,i}|^p),1\le i\le k_n,n\ge1\}$ is uniformly integrable in the Ces\`{a}ro sense, then
			there exists a
			random variable $X$ with distribution function 
			\[F(x)=1-\sup_{n\ge1} \dfrac{1}{k_n}\sum_{i=1}^{k_n}\P(|X_{n,i}|>x), x\in \R,\] 
			such that $\{X_{n,i},1\le i\le n,n\ge1\}$ is stochastically dominated in the Ces\`{a}ro sense by $X$,
			and \eqref{eq.stoch.domi.14} and \eqref{eq.stoch.domi.15} hold.
		\end{description}
	\end{cor}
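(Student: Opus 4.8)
The plan is to obtain the corollary as the specialization of Theorem~\ref{thm.sufficiency.for.stochastic.domination.5} to the weights $a_{n,i}=1/k_n$, $1\le i\le k_n$, $n\ge1$. For this choice one has $C_0=\sup_{n\ge1}\sum_{i=1}^{k_n}a_{n,i}=1$, the notion of $\{a_{n,i}\}$-uniform integrability becomes uniform integrability in the Ces\`aro sense, and the notion of $\{a_{n,i}\}$-stochastic domination becomes stochastic domination in the Ces\`aro sense \emph{with constant one}, i.e.\ inequality \eqref{eq.Gut05.stoch.domi}. The only point requiring attention is that Gut's definition \eqref{eq.stoch.domi.05} permits an arbitrary finite constant $C>0$, whereas Theorem~\ref{thm.sufficiency.for.stochastic.domination.5} is framed with $C_0=1$; this discrepancy is reconciled by Corollary~\ref{cor.compare.Gut}.

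For part~(i), I would first record that the function $g(x)=x^pL(x^p)$, $x\ge0$, is continuous, eventually nondecreasing, and satisfies $\lim_{x\to\infty}g(x)=\infty$ by the standing assumptions imposed on slowly varying functions; hence it meets hypothesis~(ii) of Theorem~\ref{thm.character.for.stochastic.domination.2}. Starting from the assumption that $\{X_{n,i},1\le i\le k_n,n\ge1\}$ is stochastically dominated in the Ces\`aro sense by $X$ with $\E(g(|X|))=\E(|X|^pL(|X|^p))<\infty$, Corollary~\ref{cor.compare.Gut} produces a random variable $Y$ satisfying \eqref{eq.Gut05.stoch.domi} together with $\E(|Y|^pL(|Y|^p))<\infty$. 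This is exactly the hypothesis of Theorem~\ref{thm.sufficiency.for.stochastic.domination.5}~(i) with $a_{n,i}=1/k_n$ and $Y$ in the role of the dominating variable, so that theorem yields the $\{1/k_n\}$-uniform integrability of $\{|X_{n,i}|^pL(|X_{n,i}|^p),1\le i\le k_n,n\ge1\}$, which is precisely uniform integrability in the Ces\`aro sense.

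For part~(ii), the hypothesis that $\{|X_{n,i}|^pL(|X_{n,i}|^p),1\le i\le k_n,n\ge1\}$ is uniformly integrable in the Ces\`aro sense is identical to its being $\{1/k_n\}$-uniformly integrable. I would therefore invoke Theorem~\ref{thm.sufficiency.for.stochastic.domination.5}~(ii) directly with $a_{n,i}=1/k_n$ and $C_0=1$. The distribution function exhibited there collapses to $F(x)=1-\sup_{n\ge1}\frac1{k_n}\sum_{i=1}^{k_n}\P(|X_{n,i}|>x)$, the resulting $\{1/k_n\}$-stochastic domination is stochastic domination in the Ces\`aro sense, and conclusions \eqref{eq.stoch.domi.14} and \eqref{eq.stoch.domi.15} carry over verbatim.

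Since each assertion is an instance of an already-established theorem, I do not anticipate any genuine obstacle; the sole nonroutine step is the passage, in part~(i), from Gut's constant-$C$ formulation to the constant-one formulation, and this is exactly what Corollary~\ref{cor.compare.Gut} delivers once the moment function $g(x)=x^pL(x^p)$ is checked to satisfy the hypotheses of Theorem~\ref{thm.character.for.stochastic.domination.2}.
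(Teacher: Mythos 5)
Your proposal is correct and takes essentially the same route as the paper, which obtains this corollary as a direct consequence of Theorem~\ref{thm.sufficiency.for.stochastic.domination.5} specialized to $a_{n,i}=1/k_n$. Your extra step in part~(i) --- passing from Gut's constant-$C$ formulation to the constant-one formulation via Corollary~\ref{cor.compare.Gut} --- is precisely the reduction the paper has already built in by observing, after Theorem~\ref{thm.character.for.stochastic.domination.2}, that one may take $C=1$ in \eqref{eq.stoch.domi.05}.
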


	\section{Laws of large numbers for triangular arrays and proofs of Theorems \ref{thm.lln.main1} and \ref{thm.lln.main3}}\label{sec:proofs}
	
	In this section, we establish strong and weak laws of large numbers for triangular arrays of random variables.
	We say that a collection $\{X_i,1\le i\le N\}$ of
	random variables satisfies condition $(H)$ if for all $a>0$,
	there exists a constant $C$ such that
	\begin{equation}\label{eqH.condition}
		\E\left(\max_{1\le k\le n}\left|\sum_{i=m+1}^{m+k}\left(X_{i}^{(a)}-\E X_{i}^{(a)}\right)\right|\right)^2\le 
		C \sum_{i=m+1}^{m+n} \E(X_{i}^{(a)})^2,\ m\ge 0, n\ge 1, m+n\le N,
	\end{equation}
	where
	\[X_{i}^{(a)}=-a\mathbf{1}(X_i<-a)+X_i \mathbf{1}(|X_i|\le a)+a\mathbf{1}(X_i>a),\ 1\le i\le N.\]
	An infinite sequence of random variables $\{X_i,i\ge1\}$ is said to satisfy condition $(H)$ if
	every finite subsequence satisfies condition $(H)$.
	Many dependence structures meet this condition. For example, condition $(H)$ holds for negatively associated sequences, negatively superadditive dependent sequences,
	AANA sequences with the sequence of mixing coefficients is in 
	$\ell_2$ (the mixing coefficients $q_n,n\ge1$ satisfying $\sum_{n=1}^\infty q_{n}^2<\infty$).
	In
	Adler and Matu{\l}a \cite{adler2018exact}, the authors used a similar condition to establish 
	exact SLLNs (see Theorems 3.2 and 4.1 in \cite{adler2018exact}).

	Throughout this section, the symbol $C$ denotes a positive universal constant which is not necessarily the same in each appearance.
	We shall let the indices $k_n$ in the previous sections be $k_n\equiv n$.
	
	The following theorem establishes the rate of convergence in SLLN
	for maximal partial sums from triangular arrays of dependent 
	random variables under the Chandra--Ghosal-type condition (see Condition \eqref{chandra01.ext} below).
	
	\begin{thm}\label{thm.lln.array1}
 Let
$\{X_{n,i},1\le i\le n,n\ge1\}$ be an array of mean zero random variables 
		such that for each $n\ge1$ fixed, the collection $\{X_{n,i},1\le i\le n\}$ satisfies condition $(H)$, and let
		\[G(x)=\sup_{n\ge1}\dfrac{1}{n}\sum_{i=1}^{n}\P(|X_{n,i}|>x),\ x\in\R.\]
Let $L(\cdot)$ be a slowly varying function	and let $1\le p<2$.	When $p=1$, we further assume that $L(\cdot)$ is nondecreasing and $L(x)\ge 1$ for all $x\ge 0$.
		If 
		\begin{equation}\label{chandra01.ext}
			\int_{0}^{\infty}x^{p-1}L^p(x) G(x)\dx x<\infty,
		\end{equation} 
		then 
		\begin{equation}\label{main103}
			\sum_{n=1}^\infty n^{-1}\mathbb{P}\left(\max_{1\le k\le n}\left|\sum_{i=1}^k X_{n,i}\right|>\varepsilon n^{1/p}\tilde{L}(n^{1/p})\right)<\infty \text{ for all }\varepsilon>0,
		\end{equation}
		where $\tilde{L}(\cdot)$ is the Bruijn conjugate of $L(\cdot)$.
	\end{thm}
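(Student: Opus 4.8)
The plan is to run the classical truncation argument at the level $b_n:=n^{1/p}\tilde L(n^{1/p})$, to control the truncated and centered maximal sums through condition $(H)$ together with the Ces\`aro-domination moment inequalities \eqref{eq.Gut07.stoch.domi}--\eqref{eq.Gut09.stoch.domi}, and to carry all of the slowly varying bookkeeping by means of the de Bruijn relation \eqref{BGT1513}. Throughout, one may assume (adjusting finitely many terms) that $b_n$ is nondecreasing with $b_n\to\infty$, since $x\tilde L(x)$ is strictly increasing under the standing convention on slowly varying functions.

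First I would convert the hypothesis into a clean moment statement. Since $x^{p-1}L^p(x)$ is eventually nondecreasing and unbounded (for $p=1$ this is exactly where $L\ge 1$ and nondecreasing is used), and $G(\cdot)$ is nonincreasing, \eqref{chandra01.ext} forces $G(x)\to 0$ as $x\to\infty$. Hence, by Corollary \ref{cor.character.for.stochastic.domination.Gut1}, there is a random variable $X$ with $\P(|X|>x)=G(x)$ which dominates $\{X_{n,i}\}$ in the Ces\`aro sense. Applying Lemma \ref{lem.RT21} to $h(x)=x^pL^p(x)$ (after the Galambos--Seneta smoothing that gives $xL'(x)/L(x)\to 0$, whence $h'(x)\le Cx^{p-1}L^p(x)$ for large $x$) turns \eqref{chandra01.ext} into $\E(|X|^pL^p(|X|))<\infty$. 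The single asymptotic that drives the rest of the proof is $b_n^pL^p(b_n)\sim n$, equivalently $L^p(b_n)\sim 1/\tilde L^p(n^{1/p})$, which follows from \eqref{BGT1513} with $u=n^{1/p}$.

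Next I would split, for each $n$,
\[\P\Big(\max_{1\le k\le n}\Big|\sum_{i=1}^k X_{n,i}\Big|>\varepsilon b_n\Big)\le \sum_{i=1}^n\P(|X_{n,i}|>b_n)+\P\Big(\max_{1\le k\le n}\Big|\sum_{i=1}^k X_{n,i}^{(b_n)}\Big|>\varepsilon b_n\Big),\]
because off the event $\bigcup_i\{|X_{n,i}|>b_n\}$ one has $X_{n,i}=X_{n,i}^{(b_n)}$. The first term, weighted by $n^{-1}$ and summed, is at most $\sum_n G(b_n)$; a blocking argument over the level sets $b_m<|X|\le b_{m+1}$ combined with $b_m^pL^p(b_m)\sim m$ bounds it by $C\,\E(|X|^pL^p(|X|))<\infty$. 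For the centering, $\E X_{n,i}=0$ gives $|\E X_{n,i}^{(b_n)}|\le \E(|X_{n,i}|\mathbf{1}(|X_{n,i}|>b_n))$; applying \eqref{eq.Gut09.stoch.domi} with $r=1$, the monotonicity of $x^{p-1}L^p(x)$, and $b_n^pL^p(b_n)\sim n$ yields $b_n^{-1}\max_{k\le n}|\sum_{i\le k}\E X_{n,i}^{(b_n)}|\le C\,\E(|X|^pL^p(|X|)\mathbf{1}(|X|>b_n))\to 0$, so for all large $n$ this centering is below $\varepsilon b_n/2$ and can be absorbed.

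Finally, condition $(H)$ (with $a=b_n$, $m=0$) and Chebyshev's inequality give
\[\P\Big(\max_{1\le k\le n}\Big|\sum_{i=1}^k \big(X_{n,i}^{(b_n)}-\E X_{n,i}^{(b_n)}\big)\Big|>\tfrac{\varepsilon}{2} b_n\Big)\le \dfrac{C}{b_n^2}\sum_{i=1}^n\E\big(X_{n,i}^{(b_n)}\big)^2,\]
and, since $|X_{n,i}^{(b_n)}|=\min(|X_{n,i}|,b_n)$, the inequality \eqref{eq.Gut07.stoch.domi} with $r=2$ reduces the weighted sum to $\sum_n b_n^{-2}\E(|X|^2\mathbf{1}(|X|\le b_n))+C\sum_n G(b_n)$. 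Interchanging the order of summation, estimating $\sum_{n\ge m}b_n^{-2}\le C\,m\,b_m^{-2}$ by Karamata's theorem (here $2/p>1$ is essential, which is exactly why $p<2$), and invoking $b_m^pL^p(b_m)\sim m$ once more, the first sum collapses to $C\,\E(|X|^pL^p(|X|))<\infty$. Adding the three contributions yields \eqref{main103}. I expect the principal obstacle to be precisely this slowly varying/de Bruijn bookkeeping---verifying $b_n^pL^p(b_n)\sim n$ and $L^p(b_n)\sim 1/\tilde L^p(n^{1/p})$ uniformly enough to pass them through the level-set blocks---together with ensuring the constant furnished by condition $(H)$ is uniform in $n$; the borderline case $p=1$ is where the additional monotonicity and size assumptions on $L$ are indispensable.
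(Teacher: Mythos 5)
Your proposal is correct and follows essentially the same route as the paper's own proof: reduction via Corollary \ref{cor.character.for.stochastic.domination.Gut1} and Lemma \ref{lem.RT21} (with Galambos--Seneta smoothing) to Ces\`{a}ro domination by an $X$ with $\E(|X|^pL^p(|X|))<\infty$, truncation at $b_n=n^{1/p}\tilde{L}(n^{1/p})$, absorption of the centering term, and condition $(H)$ plus Markov's inequality for the truncated maximal sums. The only difference is presentational: where you carry out the slowly-varying bookkeeping inline (the blocking over level sets, Karamata's estimate $\sum_{n\ge m}b_n^{-2}\le Cmb_m^{-2}$, and $b_n^pL^p(b_n)\sim n$ from \eqref{BGT1513}), the paper delegates exactly these computations to Proposition 2.6 and (3.10)--(3.14) of Anh et al. \cite{anh2021marcinkiewicz}.
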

	
	\begin{proof}
		Since $L(\cdot)$ is a slowly varying function and $L(x)\ge 1$ for all $x\ge 0$ when $p=1$, it follows from \eqref{chandra01.ext} that $\lim_{x\to\infty} G(x)=0$.
		By Corollary \ref{cor.character.for.stochastic.domination.Gut1}, the array $\{X_{n,i},1\le i\le n,n\ge1\}$ 
		is stochastically dominated in the Ces\`{a}ro sense by a
		random variable $X$ with distribution function $F(x)=1-G(x),\ x\in \R.$
		Thus
		\begin{equation}\label{main105}
			G(x)=\sup_{n\ge 1}\dfrac{1}{n}\sum_{i=1}^n \P(|X_{n,i}|>x)=\mathbb{P}(|X|>x),\ x\in \R.
		\end{equation}
		Using the same arguments as in the proof of Theorem \ref{thm.sufficiency.for.stochastic.domination.2}, we can assume, without loss of generality, that
		the function $L(\cdot)$ satisfies
		\begin{equation}\label{sv015}
			\lim_{x\to \infty}\dfrac{xL'(x)}{L(x)}=0.
		\end{equation}
		Let $h(x)=x^pL^p(x), x\ge 0$. Then it follows from \eqref{sv015} that for all large $x$, 
\begin{equation}\label{sv016}
	h'(x)=px^{p-1}L^p(x)\left(1+\dfrac{xL'(x)}{L(x)}\right)\le \dfrac{3}{2}px^{p-1}L^p(x).
\end{equation}
		Applying Lemma \ref{lem.RT21}, it thus follows from \eqref{chandra01.ext}, \eqref{main105} and \eqref{sv016} that
		\begin{equation}\label{main107}
			\E(h(|X|))=\E\left(|X|^pL^p(|X|)\right)<\infty.
		\end{equation}
		We have proved that the array $\{X_{n,i},1\le i\le n,n\ge1\}$ is stochastically dominated in the Ces\`{a}ro sense by a random variable $X$ satisfying \eqref{main107}.
		For $n\ge1$, set $b_n= n^{1/p}\tilde{L}(n^{1/p})$,
		\[Y_{n,i}=-b_n \mathbf{1}(X_{n,i}<-b_n)+X_{n,i}\mathbf{1}(|X_{n,i}|\le b_n)+b_n \mathbf{1}(X_{n,i}>b_n),1\le i\le n,\]
		and
		\[S_{n,k}=\sum_{i=1}^k (Y_{n,i}-
		\E(Y_{n,i})),\ 1\le k\le n.\]
		We will now follow the proof of the implication ((i)$\Rightarrow$(ii))
		of Theorem 3.1 in Anh et al. \cite{anh2021marcinkiewicz}.
		Let $\varepsilon>0$ be arbitrary. For $n\ge 1$,
		\begin{equation}\label{main109}
			\begin{split}
				&\P\left(\max_{1\le k\le n}\left|\sum_{i=1}^k
				X_{n,i}\right|>\varepsilon  b_n\right)\le \P\left(\max_{1\le k\le n}|X_{n,k}|>b_n\right)
				+\P\left(\max_{1\le k\le n}\left|\sum_{i=1}^k Y_{n,i}\right|>\varepsilon  b_n\right)\\
				&\le \P\left(\max_{1\le k\le n}|X_{n,k}|>b_n\right)+\P\left(\max_{1\le k\le n}|S_{n,k}|>\varepsilon
				b_n- 
				\sum_{i=1}^n \left|\E(Y_{n,i})\right|\right).
			\end{split}
		\end{equation}
		Since the array $\{X_{n,i},1\le i\le n,n\ge1\}$ is stochastically dominated in the Ces\`{a}ro sense by a random variable $X$ satisfying \eqref{main107},
		we have from Proposition 2.6 in \cite{anh2021marcinkiewicz} that
		\begin{equation}\label{main111}
			\begin{split}
				\sum_{n= 1}^{\infty} n^{-1}\P\left(\max_{1\le k\le n}
				|X_{n,k}|>b_n\right)&\le \sum_{n=1}^{\infty} n^{-1}\sum_{k=1}^{n}\P
				\left(|X_{n,k}|>b_n\right)\\
				&\le \sum_{n=1}^{\infty} \P(|X|>b_n)<\infty.
			\end{split}
		\end{equation}
		For $n\ge 1$, it follows from the assumption $\E(X_{n,i})\equiv0$ and the Ces\`{a}ro stochastic domination condition that
		\begin{equation}\label{main113}
			\begin{split}
				\dfrac{\sum_{i=1}^{n}|\E(Y_{n,i})|}{b_n}&\le \dfrac{\sum_{i=1}^{n}\left(\left|\E(X_{n,i}\mathbf{1}(|X_{n,i}|\le b_n))\right|+b_n\P(|X_{n,i}|>b_n)\right)}{b_n}\\
				& = \dfrac{\sum_{i=1}^{n}\left(\left|\E(X_{n,i}\mathbf{1}(|X_{n,i}|> b_n))\right|+b_n\P(|X_{n,i}|>b_n)\right)}{b_n}\\
				& \le \dfrac{2\sum_{i=1}^{n}\E(|X_{n,i}|\mathbf{1}(|X_{n,i}|> b_n))}{b_n}\\
				& \le \dfrac{2n \E\left(|X|\mathbf{1}(|X|> b_n)\right)}{b_n}.
			\end{split}
		\end{equation}
		For $n$ large enough
		and for $\omega \in (|X|>b_n)$, we have (see (3.10) in \cite{anh2021marcinkiewicz})
		\begin{equation}\label{main115}
			\begin{split}
				\dfrac{n}{b_n}&\le  C|X(\omega)|^{p-1} L^p(|X(\omega)|).
			\end{split}
		\end{equation}
		Applying \eqref{main113}, \eqref{main115}, \eqref{main107} and the dominated convergence theorem, we have 
		\begin{equation}\label{main117}
			\begin{split}
				\dfrac{\sum_{i=1}^{n}|\E(Y_{n,i})|}{b_n}&
				\le C\E\left(|X|^\alpha L^p(|X|)\mathbf{1}\left(|X|> b_n\right)\right)\to 0 \text{ as } n\to \infty.
			\end{split}
		\end{equation}
		From \eqref{main109}, \eqref{main111} and \eqref{main117}, to obtain \eqref{main103}, it
		remains to show that
		\begin{equation}\label{main119}
			\sum_{n=1}^\infty 
			n^{-1}\P\left(\max_{1\le k\le n} |S_{n,k}|>b_n\varepsilon /2 \right)<\infty.
		\end{equation}
		Applying Markov's inequality, condition ($H$), and the Ces\`{a}ro stochastic domination condition yields
		\begin{equation}\label{main121}
			\begin{split}
				&\sum_{n=1}^{\infty} \dfrac{1}{n}\P\left(\max_{1\le k\le n}
				|S_{n,k}|>b_n\varepsilon /2 \right)
				\le \sum_{n=1}^{\infty}\dfrac{4 }{\varepsilon ^2 nb_{n}^2}
				\E\left(\max_{1\le k\le n}|S_{n,k}|\right)^2\\
				& \le \sum_{n=1}^{\infty}\dfrac{C}{nb_{n}^2}
				\sum_{i=1}^n\E\left(Y_{n,i}^2\right)\\
				& \le C\sum_{n=1}^{\infty}\dfrac{\left(\E \left(X^2\mathbf{1}(|X|\le b_n)\right)+b_{n}^2 \P(|X|>b_n)\right)}{b_{n}^{2}}
				\\
				&= C\sum_{n=1}^{\infty}\left(\P(|X|>b_n)+\dfrac{\E(X^2\mathbf{1}(|X|\le b_n))}{b_{n}^{2}}\right).
			\end{split}
		\end{equation}
Using the last four lines of (3.13) of \cite{anh2021marcinkiewicz} and (3.14) of \cite{anh2021marcinkiewicz}, we have
		\begin{equation}\label{main122}
	\begin{split}
		\sum_{n=1}^{\infty}\left(\P(|X|>b_n)+\dfrac{\E(X^2\mathbf{1}(|X|\le b_n))}{b_{n}^{2}}\right)\le C+ C\E(|X|^pL^p(|X|)).
	\end{split}
\end{equation}
Combining \eqref{main121}, \eqref{main122}, and \eqref{main107} yields \eqref{main119}.
		\qed
	\end{proof}
	
	The following corollary establishes rate of
	convergence in a Marcinkiewicz--Zygmund-type SLLN for
	arrays of random variables under a uniformly bounded
	moment condition.
	
	\begin{cor}\label{cor.bounded.moment01}
		Let $1\le p<2$ and let $\nu$ be a fixed positive integer. Let $\{X_{n,i},1\le i\le n,n\ge1\}$ be an array of mean zero random variables 
		such that for each $n\ge1$ fixed, the collection $\{X_{n,i},1\le i\le n\}$ satisfies condition $(H)$. If
		\begin{equation}\label{bounded.moment03}
			\sup_{n\ge 1}\dfrac{1}{n}\sum_{i=1}^{n}\E\left(|X_{n,i}|^pL^p(|X_{n,i}|)\log_{\nu}^{(2)} (|X_{n,i}|)\right)<\infty,
		\end{equation}	
		then \eqref{main103} is obtained.
	\end{cor}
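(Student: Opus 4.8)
The plan is to derive the corollary directly from Theorem \ref{thm.lln.array1} by checking its single analytic hypothesis \eqref{chandra01.ext}. The point to exploit is that whenever $L(\cdot)$ is slowly varying, so is $L^p(\cdot)=(L(\cdot))^p$, since $L^p(\lambda x)/L^p(x)=(L(\lambda x)/L(x))^p\to1$. Hence \eqref{bounded.moment03} is nothing but the hypothesis of Corollary \ref{cor.character.for.stochastic.domination.Gut3} with the slowly varying function $L^p$ substituted for $L$ (and $k_n\equiv n$). When $p=1$ I keep the standing assumptions on $L$ inherited from Theorem \ref{thm.lln.array1}, namely $L$ nondecreasing with $L(x)\ge1$; these are needed only to invoke that theorem at the very end.

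First I would apply Corollary \ref{cor.character.for.stochastic.domination.Gut3} to produce a random variable $X$ with distribution function $F(x)=1-G(x)$ such that $\{X_{n,i},1\le i\le n,n\ge1\}$ is stochastically dominated in the Ces\`aro sense by $X$ and
\[\E\left(|X|^pL^p(|X|)\right)<\infty.\]
Since $G(x)=1$ for every $x<0$, the variable $X$ is nonnegative, so $G(x)=\P(|X|>x)$ for all $x\in\R$.

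Next I would verify \eqref{chandra01.ext}. By Tonelli's theorem,
\[\int_0^\infty x^{p-1}L^p(x)G(x)\dx x=\int_0^\infty x^{p-1}L^p(x)\P(|X|>x)\dx x=\E\left(\int_0^{|X|}x^{p-1}L^p(x)\dx x\right).\]
As $x\mapsto x^{p-1}L^p(x)$ is regularly varying of index $p-1\ge0>-1$, Karamata's theorem yields $\int_0^y x^{p-1}L^p(x)\dx x\sim y^pL^p(y)/p$ as $y\to\infty$; hence there are $y_0\ge0$ and a constant $C$ with $\int_0^y x^{p-1}L^p(x)\dx x\le Cy^pL^p(y)$ for $y\ge y_0$, while this integral stays bounded on $[0,y_0]$ (the integrand being locally integrable, as $L$ is continuous by the standing reduction). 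Splitting the last expectation at the threshold $y_0$ gives
\[\E\left(\int_0^{|X|}x^{p-1}L^p(x)\dx x\right)\le C_1+C\,\E\left(|X|^pL^p(|X|)\right)<\infty,\]
which is precisely \eqref{chandra01.ext}.

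With \eqref{chandra01.ext} in hand, Theorem \ref{thm.lln.array1} applies verbatim and delivers \eqref{main103}. The only nonroutine step is the passage from the bounded moment condition to the integral condition, i.e.\ the Karamata bound $\int_0^y x^{p-1}L^p(x)\dx x\le Cy^pL^p(y)$; should one prefer to avoid citing regular-variation theory, the same estimate follows from Lemma \ref{lem.RT21} applied to $h(x)=x^pL^p(x)$, whose derivative satisfies $h'(x)\le 3px^{p-1}L^p(x)/2$ for large $x$ exactly as in the proof of Theorem \ref{thm.lln.array1}. I anticipate no genuine obstacle beyond this standard computation, since everything substantive has been packaged into Corollary \ref{cor.character.for.stochastic.domination.Gut3} and Theorem \ref{thm.lln.array1}.
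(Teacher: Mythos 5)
Your proposal is correct and follows essentially the same route as the paper's proof: apply Corollary \ref{cor.character.for.stochastic.domination.Gut3} (with the slowly varying function $L^p$ in place of $L$) to obtain Ces\`{a}ro stochastic domination by a random variable $X$ with $\E\left(|X|^pL^p(|X|)\right)<\infty$, then invoke Theorem \ref{thm.lln.array1}; your Tonelli--Karamata computation simply makes explicit the passage from this moment bound to \eqref{chandra01.ext}, which the paper compresses into the words ``that is''. One small correction to your parenthetical alternative: deducing \eqref{chandra01.ext} via Lemma \ref{lem.RT21} requires the lower bound $h'(x)\ge \tfrac{p}{2}\,x^{p-1}L^p(x)$ (so that $x^{p-1}L^p(x)\le \tfrac{2}{p}\,h'(x)$ and the tail integral is dominated by $\tfrac{2}{p}\E\left(h(|X|)\right)$), not the upper bound $h'(x)\le \tfrac{3p}{2}\,x^{p-1}L^p(x)$ you quoted---both follow from $xL'(x)/L(x)\to 0$, so this does not affect the validity of your main argument.
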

	
	\begin{proof}
		By applying Corollary \ref{cor.character.for.stochastic.domination.Gut3}, we have from 
		\eqref{bounded.moment03} that the array $\{X_{n,i},1\le i\le n,n\ge1\}$
		is stochastically dominated in the Ces\`{a}ro sense by a random variable $X$ with $\E(|X|^pL^p(|X|))<\infty$, that is, \eqref{chandra01.ext} is satisfied. 
		Applying Theorem \ref{thm.lln.array1}, we obtain \eqref{main103}.
		\qed
	\end{proof}
	
	The moment condition \eqref{bounded.moment03} is almost optimal. The following example shows that there exists
	an array of random variables $\{X_{n,i},1\le i\le n,n\ge1\}$ such that
	\begin{equation}\label{bounded.moment04}
		\sup_{n\ge 1}\dfrac{1}{n}\sum_{i=1}^{n}\E\left(|X_{n,i}|^p \log_{\nu} (|X_{n,i}|)\right)<\infty
	\end{equation}	
	for every fixed positive integer $\nu$, but \eqref{main103} fails with $\tilde{L}(x)\equiv L(x)\equiv1$.
	
	\begin{exm}\label{exm.optimal.moment.for.comp.conv}
		{\rm 
			Let $\nu$ be an arbitrary fixed positive integer and let $1\le p<2$. Let
			$\{X_n,n\ge1\}$ be a sequence of independent random variables with
			\[\P(X_{n}=0)=1-\dfrac{1}{n\log_{\nu}(n)},\  \P\left(X_{n}=\pm(n+1)^{1/p}\right)=\dfrac{1}{2n\log_{\nu}(n)}, n\ge1\]
			and let $X_{n,i}=X_i,1\le i\le n,n\ge1.$
			Then \eqref{bounded.moment04} is satisfied, and
			\begin{equation}\label{exm25}
				\begin{split}
					\sum_{n=1}^\infty \P(|X_{n}|>n^{1/p})&=	\sum_{n=1}^\infty\dfrac{1}{n\log_{\nu}(n)}=\infty.
				\end{split}
			\end{equation}
			If \eqref{main103} (with $\tilde{L}(x)\equiv L(x)\equiv1$) holds, then
			\begin{equation*}\label{exm27}
				\sum_{n=1}^\infty\dfrac{1}{n}\mathbb{P}\left(\max_{1\le j\le n}\left|\sum_{i=1}^j X_{i}\right|>\varepsilon n^{1/p}\right)<\infty \text{ for all }\varepsilon>0.
			\end{equation*}
			This implies that
			\[	\lim_{n\to\infty}\dfrac{\sum_{i=1}^nX_i}{n^{1/p}}=0 \text{ a.s,}\]
			and thus
			\begin{equation}\label{exm29}
				\lim_{n\to\infty}\dfrac{X_{n}}{n^{1/p}}=0 \text{ a.s.}
			\end{equation}
			Applying the Borel--Cantelli lemma, we have from \eqref{exm29} that
			\begin{equation*}\label{eq.exm23}
				\sum_{n=1}^\infty \P(|X_{n}|>n^{1/p})<\infty
			\end{equation*}
			contradicting \eqref{exm25}. Therefore, \eqref{main103} (with $\tilde{L}(x)\equiv L(x)\equiv1$) must fail.
			
		}

	\end{exm}

	If we consider sequences of random variables instead of triangular arrays, we obtain the following Marcinkiewicz--Zygmund-type SLLN with general normalizing 
	sequences.
	
	\begin{cor}\label{cor.MZ01}
Let $\{X_{n},n\ge1\}$ be a sequence of mean zero random variables 
		satisfying condition $(H)$. Let
		$L(\cdot)$ be a slowly varying function and let $1\le p<2$.
		When $p=1$, we further assume that $L(\cdot)$ is nondecreasing and $L(x)\ge 1$ for all $x\ge 0$.
		If \begin{equation}\label{chandra02.ext}
			\int_{0}^{\infty}x^{p-1}L^p(x)\left(\sup_{n\ge1}\dfrac{1}{n}\sum_{i=1}^n \P(|X_i|>x)\right) \dx x<\infty,
		\end{equation} 
		then 
		\begin{equation}\label{MZ11}
			\lim_{n\to\infty}\dfrac{\sum_{i=1}^nX_i}{n^{1/p}\tilde{L}(n^{1/p})}=0 \text{ a.s.,}
		\end{equation}
		where $\tilde{L}(\cdot)$ is the Bruijn conjugate of $L(\cdot)$.
	\end{cor}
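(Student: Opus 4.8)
\noindent The plan is to deduce the corollary from Theorem~\ref{thm.lln.array1} by regarding the sequence as a triangular array and then converting the complete-convergence conclusion \eqref{main103} into the almost-sure statement \eqref{MZ11} by a dyadic subsequence argument. First I would put $X_{n,i}=X_i$ for $1\le i\le n$, $n\ge1$. Each row $\{X_1,\dots,X_n\}$ is a finite subsequence of $\{X_n,n\ge1\}$ and so satisfies condition $(H)$ by hypothesis, while the mean-zero property is inherited. Under this identification the function $G(\cdot)$ of Theorem~\ref{thm.lln.array1} becomes $G(x)=\sup_{n\ge1}\tfrac1n\sum_{i=1}^n\P(|X_i|>x)$, so that \eqref{chandra02.ext} is exactly \eqref{chandra01.ext}. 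Hence Theorem~\ref{thm.lln.array1} applies and \eqref{main103} gives, with $b_n:=n^{1/p}\tilde{L}(n^{1/p})$,
\[
\sum_{n=1}^{\infty} n^{-1}\,\P\!\left(M_n>\varepsilon\,b_n\right)<\infty\qquad\text{for all }\varepsilon>0,\qquad\text{where } M_n:=\max_{1\le k\le n}\Bigl|\sum_{i=1}^{k}X_i\Bigr|.
\]

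\noindent Note that $M_n$ is nondecreasing in $n$ and that $b_n$ is regularly varying of index $1/p>0$, hence (after the harmless monotonization of the underlying slowly varying functions adopted throughout the paper) eventually strictly increasing with $b_{2^{j+1}}/b_{2^j}\to 2^{1/p}$. The crucial step is to turn the $n^{-1}$-weighted series above into an unweighted series along the dyadic subsequence $\{2^j\}$. Fix $\varepsilon>0$ and set $\delta:=\varepsilon\,2^{-2/p}$. For every large $j$ and every $n\in(2^j,2^{j+1}]$ one has $M_n\ge M_{2^j}$ and $b_n\le b_{2^{j+1}}\le 2^{2/p}b_{2^j}$, whence
\[
\{M_{2^j}>\varepsilon b_{2^j}\}\subseteq\{M_n>\delta b_n\}.
\]
Multiplying by $n^{-1}$, summing over $n\in(2^j,2^{j+1}]$, and using $\sum_{n=2^j+1}^{2^{j+1}}n^{-1}\ge \tfrac12$, I get $\P(M_{2^j}>\varepsilon b_{2^j})\le 2\sum_{n=2^j+1}^{2^{j+1}}n^{-1}\P(M_n>\delta b_n)$. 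Since the dyadic blocks are disjoint, summation over $j$ yields $\sum_j\P(M_{2^j}>\varepsilon b_{2^j})\le 2\sum_n n^{-1}\P(M_n>\delta b_n)<\infty$ by the displayed bound (applied with $\delta$ in place of $\varepsilon$). By the Borel--Cantelli lemma, $M_{2^j}/b_{2^j}\to0$ a.s., and since $\varepsilon>0$ is arbitrary this holds along the whole subsequence.

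\noindent It remains to interpolate between the dyadic indices. For $n\in(2^{j-1},2^j]$, monotonicity of $M_n$ and of $b_n$ gives
\[
\frac{1}{b_n}\Bigl|\sum_{i=1}^{n}X_i\Bigr|\le \frac{M_n}{b_n}\le\frac{M_{2^j}}{b_{2^{j-1}}}=\frac{b_{2^j}}{b_{2^{j-1}}}\cdot\frac{M_{2^j}}{b_{2^j}},
\]
and since $b_{2^j}/b_{2^{j-1}}\to 2^{1/p}$ is bounded, the right-hand side tends to $0$ almost surely, which is precisely \eqref{MZ11}. The only genuinely delicate point is the block comparison that converts the $n^{-1}$-weighted convergence of Theorem~\ref{thm.lln.array1} into summability along $\{2^j\}$; the remainder is routine bookkeeping with the regularly varying normalizer $b_n$, and the passage mirrors the standard derivation of the strong law from complete convergence as in Anh et al.~\cite{anh2021marcinkiewicz}.
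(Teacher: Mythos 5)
Your proposal is correct and follows essentially the same route as the paper: identify $X_{n,i}=X_i$ so that \eqref{chandra02.ext} becomes \eqref{chandra01.ext}, apply Theorem \ref{thm.lln.array1} to get the weighted complete convergence \eqref{main103}, and then deduce the almost-sure limit. The paper simply asserts that \eqref{MZ11} ``follows from'' this complete-convergence statement, whereas you have written out the standard dyadic-block/Borel--Cantelli interpolation that justifies it; your bookkeeping there (the inclusion $\{M_{2^j}>\varepsilon b_{2^j}\}\subseteq\{M_n>\delta b_n\}$ for $n\in(2^j,2^{j+1}]$ and the use of regular variation of $b_n$) is sound.
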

	
	\begin{proof}
		Set 
		\[X_{n,i}=
		X_i,1\le i\le n,n\ge1.\]
		Then \eqref{chandra02.ext} coincides with \eqref{chandra01.ext}.
		Applying Theorem \ref{thm.lln.array1}, we have
		\begin{equation}\label{main302}
			\sum_{n=1}^\infty\dfrac{1}{n}\mathbb{P}\left(\max_{1\le j\le n}\left|\sum_{i=1}^j X_{i}\right|>\varepsilon n^{1/p}\tilde{L}(n^{1/p})\right)<\infty \text{ for all }\varepsilon>0.
		\end{equation}
		The Marcinkiewicz-Zygmund-type SLLN \eqref{MZ11} follows from \eqref{main302}. \qed
	\end{proof}
	
	\noindent\textit{Proof of Theorem \ref{thm.lln.main1}.}
	Since $\{X_n,n\ge1\}$ is a sequence of AANA random variables
	with the sequence of mixing coefficients is in $\ell_2$, it satisfies condition ($H$) (see Lemmas 2.1 and 2.2 of Ko et al. \cite{ko2005hajeck}).
	Thus Theorem \ref{thm.lln.main1} follows from Corollary \ref{cor.MZ01} by taking $L(x)\equiv1$. \qed
	\vskip.1in
	
	The following theorem is a significant extension of Theorem \ref{thm.lln.main3}. It establishes a WLLN for weighted sums from arrays of random variables.
	
	\begin{thm}\label{thm.lln.array2}
		Let $\{X_{n,i},1\le i\le n,n\ge1\}$ be an array of random variables. Let $G(\cdot)$ be
as in Theorem \ref{thm.lln.array1} and let
		$\{b_n,n\ge1\}$ be a nondecreasing sequence of positive real numbers satisfying \eqref{wl01}.
		Let $\{c_{n,i},1\le i\le n\}$ be an array of nonnegative real numbers satisfying
		\begin{equation}\label{wl.weight3}
			0<A_n:=\sum_{i=1}^n c_{n,i}\le C n,\ n\ge 1
		\end{equation}
		and let
		\[\hat{G}(x)=\sup_{n\ge1}\sum_{i=1}^{n}a_{n,i}\P(|X_{n,i}|>x),\ x\in\R,\]
		where $a_{n,i}=A_{n}^{-1}c_{n,i}, \ 1\le i\le n, n\ge 1.$
		If
		\begin{equation}\label{wl.array03}
			\lim_{k\to\infty}kG(b_k)=0\ \text{ and }\ \lim_{k\to\infty}k\hat{G}(b_k)=0,
		\end{equation}
		then the WLLN 
		\begin{equation}\label{wl.array05}
			\dfrac{1}{b_n}\max_{j\le n}\left|\sum_{i=1}^j c_{n,i}X_{n,i}\right|\overset{\P}{\to} 0 \text{ as }n\to\infty
		\end{equation}
		is obtained.
	\end{thm}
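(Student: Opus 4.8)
The plan is to prove the WLLN \eqref{wl.array05} by a plain truncation argument that uses neither a dependence structure nor centering, the point being that \eqref{wl01} forces $b_n/n\to\infty$, so that the first absolute moments of the truncated summands are already negligible and no maximal inequality is needed. Write $a_{n,i}=A_n^{-1}c_{n,i}$, so that $\sum_{i=1}^n a_{n,i}=1$ (the weights satisfy \eqref{eq.weight01} with $C_0=1$) and, by \eqref{wl.weight3}, $c_{n,i}=A_n a_{n,i}\le Cn\,a_{n,i}$. Truncating at level $b_n$, I would split $X_{n,i}=X_{n,i}\mathbf 1(|X_{n,i}|\le b_n)+X_{n,i}\mathbf 1(|X_{n,i}|>b_n)$ and, via the triangle inequality applied to the maxima, reduce \eqref{wl.array05} to showing that the two normalized maximal sums built from these two pieces each converge to $0$ in probability.

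For the large-value piece, observe that on the event that $|X_{n,i}|\le b_n$ for every $i\le n$ each partial sum $\sum_{i\le j}c_{n,i}X_{n,i}\mathbf 1(|X_{n,i}|>b_n)$ is identically $0$; hence for every $\varepsilon>0$,
\[\P\!\left(\tfrac{1}{b_n}\max_{j\le n}\Big|\sum_{i=1}^j c_{n,i}X_{n,i}\mathbf 1(|X_{n,i}|>b_n)\Big|>\varepsilon\right)\le\sum_{i=1}^n\P(|X_{n,i}|>b_n)\le nG(b_n),\]
which tends to $0$ by the first half of \eqref{wl.array03}; this is the only place the unweighted tail $G$ enters. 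For the truncated piece I would discard both the signs and the inner maximum: since the summands $c_{n,i}|X_{n,i}|\mathbf 1(|X_{n,i}|\le b_n)$ are nonnegative, the maximum over $j$ is attained at $j=n$, so Markov's inequality gives
\[\P\!\left(\tfrac{1}{b_n}\max_{j\le n}\Big|\sum_{i=1}^j c_{n,i}X_{n,i}\mathbf 1(|X_{n,i}|\le b_n)\Big|>\varepsilon\right)\le\frac{1}{\varepsilon b_n}\sum_{i=1}^n c_{n,i}\E\!\left(|X_{n,i}|\mathbf 1(|X_{n,i}|\le b_n)\right).\]
Bounding $\E(|X_{n,i}|\mathbf 1(|X_{n,i}|\le b_n))\le\int_0^{b_n}\P(|X_{n,i}|>x)\dx x$, summing against $c_{n,i}\le Cn\,a_{n,i}$, and using $\sum_i a_{n,i}\P(|X_{n,i}|>x)\le\hat G(x)$, everything reduces to the single analytic estimate $\tfrac{n}{b_n}\int_0^{b_n}\hat G(x)\dx x\to 0$.

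The heart of the argument — and the step I expect to be the main obstacle — is therefore this last limit, which I would extract from $k\hat G(b_k)\to 0$ together with \eqref{wl01}. First, \eqref{wl01} already forces $b_n/n\to\infty$: the bound gives $b_n/n\ge b_1/C$, hence $b_i\ge(b_1/C)i$, so that $\sum_{i=1}^n b_i/i^2\ge(b_1/C)\sum_{i=1}^n 1/i$ grows at least like $\log n$, whence $b_n/n\gtrsim\log n\to\infty$. Next, since $\hat G$ is nonincreasing, with $b_0:=0$ a Riemann-sum comparison yields $\int_0^{b_n}\hat G(x)\dx x\le b_1+\sum_{j=1}^{n-1}(b_{j+1}-b_j)\hat G(b_j)$. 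Writing $\hat G(b_j)=\varepsilon_j/j$ with $\varepsilon_j:=j\hat G(b_j)\to 0$ and splitting the sum at a fixed index $K$ beyond which $\varepsilon_j<\delta$, the initial block is a constant $O(b_K)$ while the remaining tail is at most $\delta\sum_{j=1}^{n-1}(b_{j+1}-b_j)/j$.

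The delicate computation is the control of this last telescoping sum. A direct rearrangement gives $\sum_{j=1}^{n-1}(b_{j+1}-b_j)/j\le b_n/(n-1)+2\sum_{j=2}^{n}b_j/j^2$, which is $O(b_n/n)$ precisely by \eqref{wl01} (here the factor $1/(k(k-1))\le 2/k^2$ converts the widths $b_{j+1}-b_j$ into the controlled series $\sum b_j/j^2$). Combining these bounds yields $\tfrac{n}{b_n}\int_0^{b_n}\hat G(x)\dx x\le\tfrac{n}{b_n}(b_1+b_K)+\delta\,O(1)$; letting $n\to\infty$ makes the first term vanish because $b_n/n\to\infty$, and then letting $\delta\to0$ gives $\tfrac{n}{b_n}\int_0^{b_n}\hat G(x)\dx x\to0$. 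Assembling the large-value piece and the truncated piece then delivers \eqref{wl.array05}, and since only the triangle inequality and Markov's inequality were used, the martingale maximal inequality invoked by Boukhari is indeed unnecessary.
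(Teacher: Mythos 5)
Your proof is correct and follows essentially the same route as the paper: truncation at $b_n$, a union bound giving $nG(b_n)\to 0$ for the large-value part, and Markov plus the triangle inequality reducing the truncated part to $\tfrac{n}{b_n}\int_0^{b_n}\hat G(x)\,\dx x\to 0$, which both arguments settle via the same Abel-summation identity $\sum_{k}(b_k-b_{k-1})/k \le b_n/n + \sum_k b_k/k^2 = O(b_n/n)$ from \eqref{wl01}. The only differences are cosmetic: you work directly with $G$ and $\hat G$ where the paper first packages them as tails of dominating random variables via Theorem \ref{thm.character.for.stochastic.domination.1}, you prove $b_n/n\to\infty$ from scratch where the paper cites Boukhari, and your $\varepsilon$--$\delta$ splitting at the index $K$ is an inline proof of the Toeplitz lemma that the paper invokes by name.
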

	
	\begin{rem}{\rm 
			It is clear that in the unweighted case, i.e., $c_{n,i}\equiv1$, then $\hat{G}(x)\equiv G(x)$.
It is also easy to see that if $\{X_{n,i},1\le i\le n,n\ge1\}$ is stochastically dominated by a random variable $X$ with $\lim_{k\to\infty}k\P(|X|>b_k)=0$, then
			both halves of \eqref{wl.array03} are fulfilled.}
	\end{rem}
	
	At the first look, the second half of \eqref{wl.array03} may seem to be  a technical condition. However, the following example shows that it cannot be dispensed with.
	
	\begin{exm}{\rm 
			Let $0<p<1$, $b_n=n^{1/p},\ n\ge1$ and let $\{X_{n,i},1\le i\le n,n\ge1\}$ be
			an array of random variables such that
			\begin{equation*}
				\mathbb{P}(X_{n,i}=-1)=\mathbb{P}(X_{n,i}=1)=1/2\ \text{ for }1\le i<n,n\ge 2, 
			\end{equation*}
			and 
			\begin{equation*}
				\mathbb{P}(X_{n,n}=-n^{1/p}\log^{-1/p}(n))=\mathbb{P}(X_{n,n}=n^{1/p}\log^{-1/p}(n))=1/2\ \text{ for }n\ge 1.
			\end{equation*}
			Let $\{c_{n,i},1\le i\le n,n\ge1\}$ be an array of real numbers such that
			\[c_{n,i}=0 \text{ for }1\le i<n, n\ge2, \text{ and }c_{n,n}=n \text{ for }n\ge1\]
			and let
			\[A_n=\sum_{i=1}^n c_{n,i},\ a_{n,i}=\dfrac{c_{n,i}}{A_n},1\le i\le n,n\ge1. \]
			Then \eqref{wl.weight3} is satisfied since $A_n\equiv n$.
			Let 
			\[G(x)=\sup_{n\ge1}\dfrac{1}{n}\sum_{i=1}^{n}\P(|X_{n,i}|>x),\ x\in\R,\] 
			and
			\[\hat{G}(x)=\sup_{n\ge1}\sum_{i=1}^{n}a_{n,i}\P(|X_{n,i}|>x),\ x\in\R.\]
			For $n\ge 1$, we have
			\begin{equation*}
				\begin{split}
					\dfrac{1}{n}\sum_{i=1}^{n}\E\left(|X_{n,i}|^p\log(|X_{n,i}|)\right)&\le 1+\dfrac{1}{n}\E(|X_{n,n}|^p\log(|X_{n,n}|))\\
					&=1+\dfrac{1}{\log(n)}\left(\dfrac{\log(n)-\log(\log(n))}{p}\right)\\
					&\le 1+\dfrac{1}{p}<\infty.
				\end{split}
			\end{equation*}
			Therefore, $\{|X_{n,i}|^p,1\le i\le n,n\ge1\}$ is uniformly integrable in the Ces\`{a}ro sense by the de La Vall\'{e}e Poussin criterion for the Ces\`{a}ro uniform integrability.
			Then by Corollary \ref{cor.Cesaro.5}, $\{X_{n,i},1\le i\le n,n\ge1\}$ is
			stochastically dominated in the Ces\`{a}ro sense by a random variable $X$ 
			with distribution function
			\begin{equation*}
				F(x)=1-\sup_{n\ge 1}\dfrac{1}{n}\sum_{i=1}^n\mathbb{P}(|X_{n,i}|>x)=1-G(x),\ x\in\R,
			\end{equation*}
			and the first half of \eqref{wl.array03} (with $b_n\equiv n^{1/p}$) is satisfied. 
			However, the second half of \eqref{wl.array03} (with $b_n\equiv n^{1/p}$) fails since
			\[\hat{G}(x)=\sup_{n\ge 1}\sum_{i=1}^n a_{n,i}\P(|X_{n,i}|>x)=\sup_{n\ge 1}\P(|X_{n,n}|>x)=1 \text{ for all }x\in\R.\]
			For $n\ge1$, we have with probability $1$,
			\begin{equation*}
				\begin{split}
					\dfrac{1}{b_n}\max_{j\le n}\left|\sum_{i=1}^j c_{n,i}X_{n,i}\right|&=\dfrac{1}{n^{1/p}}c_{n,n}|X_{n,n}|\\
					&=\dfrac{n}{\log^{1/p}(n)}\to\infty
				\end{split}
			\end{equation*}
			therefore the WLLN \eqref{wl.array05} also fails.}
	\end{exm}

	\noindent\textit{Proof of Theorem \ref{thm.lln.array2}.}
	From \eqref{wl01}, we have $b_n\to\infty$ (see \eqref{wl137} below). Since $G(x)$ and $\hat{G}(x)$ are nonincreasing, it follows from \eqref{wl.array03} that
	$\lim_{x\to\infty} G(x)=0$ and $\lim_{x\to\infty} \hat{G}(x)=0.$
	By Corollary \ref{cor.character.for.stochastic.domination.Gut1}, 
	$\{X_{n,i},1\le i\le n,n\ge1\}$ is stochastically dominated in the Ces\`{a}ro sense by a random variable $X$,
	and by Theorem \ref{thm.character.for.stochastic.domination.1},
	$\{X_{n,i},1\le i\le n,n\ge1\}$ is $\{a_{n,i}\}$-stochastically dominated by a random variable $Y$.
	The distribution functions of $X$ and $Y$, respectively, are
	\[F_X(x)=1-G(x)\ \text{ and }\ F_Y(x)=1-\hat{G}(x),\ x\in \R.\]
	Thus
	\begin{equation}\label{wl125}
		\sup_{n\ge 1}\dfrac{1}{n}\sum_{i=1}^n \P(|X_{n,i}|>x)=\mathbb{P}(|X|>x),\ x\in \R,
	\end{equation}
	and
	\begin{equation}\label{wl126}
		\sup_{n\ge 1}\sum_{i=1}^n a_{n,i}\P(|X_{n,i}|>x)=\mathbb{P}(|Y|>x),\ x\in \R,
	\end{equation}
	and so \eqref{wl.array03} becomes
	\begin{equation}\label{wl127}
		\lim_{k\to\infty}k\mathbb{P}(|X|>b_k)=0\ \text{ and }\ \lim_{k\to\infty}k\mathbb{P}(|Y|>b_k)=0.
	\end{equation}
	For $n\ge 1$, set
	\[Y_{n,i}=X_{n,i}\mathbf{1}(|X_{n,i}|\le b_n),\ 1\le i\le n.\]
	We first verify that
	\begin{equation}\label{wl133}
		\dfrac{\max_{1\le j\le n}\left|\sum_{i=1}^j c_{n,i}\left( X_{n,i} - Y_{n,i} \right)\right|}{b_n}\overset{\mathbb{P}}{\to} 0 \text { as } n \rightarrow \infty.
	\end{equation}
	To see this, let $\varepsilon > 0 $ be arbitrary. Then we have from \eqref{wl125} and the first half of \eqref{wl127} that
	\begin{equation*}
		\begin{split}
			\mathbb{P}\left(\max_{1\le j\le n}\left|\sum_{i=1}^j c_{n,i}\left( X_{n,i} - Y_{n,i} \right)\right|>b_n\varepsilon\right)
			&\le \mathbb{P}\left(\bigcup_{i=1}^n(X_{n,i}\not=Y_{n,i})\right)\\
			&\le \sum_{i=1}^n \mathbb{P}(|X_{n,i}|>b_n)\\
			&\le n\mathbb{P}(|X|>b_n)\to 0 \text{ as } n\to\infty
		\end{split}
	\end{equation*}
	thereby proving $\eqref{wl133}$.
	
	Next, it will be shown that 
	\begin{equation}\label{wl136}
		\dfrac{\max_{1\le j\le n}\left|\sum_{i=1}^j c_{n,i}Y_{n,i}\right|}{b_n}\overset{\mathbb{P}}{\to} 0 \text { as } n \rightarrow \infty.
	\end{equation}
	To accomplish this, we first recall that \eqref{wl01} implies (see Remark 2.4 (i) in Boukhari \cite{boukhari2021weak})
	\begin{equation}\label{wl137}
		\dfrac{n}{b_n}\to 0 \text{ as }n\to\infty.
	\end{equation}
	Set $b_0=0$. Again, let $\varepsilon > 0 $ be arbitrary. Then
	\begin{equation}\label{wl138}
		\begin{split}
			\mathbb{P}\left(\max_{1\le j\le n}\left|\sum_{i=1}^j c_{n,i}Y_{n,i}\right|>b_n\varepsilon\right)
			&\le \dfrac{1}{b_n\varepsilon}\mathbb{E}\left(\max_{1\le j\le n}\left|\sum_{i=1}^j c_{n,i}Y_{n,i}\right|\right)\\
			&\le \dfrac{1}{b_n\varepsilon}\sum_{i=1}^n c_{n,i}\E(|Y_{n,i}|)\\
			&\le \dfrac{1}{b_n\varepsilon}\sum_{i=1}^n c_{n,i} \int_{0}^{b_n}\mathbb{P}(|X_{n,i}|>x)\dx x\\
			&= \dfrac{A_n}{b_n\varepsilon}\sum_{i=1}^n a_{n,i} \int_{0}^{b_n}\mathbb{P}(|X_{n,i}|>x)\dx x\\
			&\le \dfrac{A_n}{b_n\varepsilon}\int_{0}^{b_n}\mathbb{P}(|Y|>x)\dx x\\
			&= \dfrac{A_n}{b_n\varepsilon}\sum_{k=1}^n\int_{b_{k-1}}^{b_k}\mathbb{P}(|Y|>x)\dx x\\
			&\le \dfrac{Cn}{b_n\varepsilon}\sum_{k=1}^n\dfrac{b_k-b_{k-1}}{k}k\mathbb{P}(|Y|>b_{k-1}),
		\end{split}
	\end{equation}
	where we have applied Markov's inequality in the first inequality, \eqref{wl126} in the fourth inequality, and \eqref{wl.weight3}
	in the last inequality. Now when $n\ge 2$,
	\begin{equation*}
		\begin{split}
			\dfrac{n}{b_n}\sum_{k=1}^n\dfrac{b_k-b_{k-1}}{k}&=\dfrac{n}{b_n}\left(\sum_{k=1}^{n-1}\dfrac{b_k}{k(k+1)}+\dfrac{b_n}{n}\right)\\
			&\le \dfrac{n}{b_n}\left(\sum_{k=1}^{n-1}\dfrac{b_k}{k^2}+\dfrac{b_n}{n}\right)\\
			&\le C \ \text{ (by \ref{wl01})},
		\end{split}
	\end{equation*}
	for all fixed $k$,
	\begin{equation*}
		\dfrac{n}{b_n}\left(\dfrac{b_{k}-b_{k-1}}{k}\right)\to 0 \text{ as } n\to\infty\ \text{ (by \eqref{wl137})},
	\end{equation*}
	and for $k\ge 2$,
	\begin{equation*}
		k\mathbb{P}(|Y|>b_{k-1})\le 2(k-1)\mathbb{P}(|Y|>b_{k-1}) \to 0 \text{ as } k\to\infty\ \text{ (by the second half of \ref{wl127})}.
	\end{equation*}
	Thus by the Toeplitz lemma
	\begin{equation*}
		\dfrac{n}{b_n}\sum_{k=1}^n\dfrac{b_k-b_{k-1}}{k}k\mathbb{P}(|Y|>b_{k-1})\to 0 \text{ as } n\to\infty
	\end{equation*}
	and \eqref{wl136}  then follows from \eqref{wl138}. Combining \eqref{wl133} and \eqref{wl136} yields \eqref{wl.array05}.\qed
	
	\vskip.1in
	\noindent\textit{Proof of Theorem \ref{thm.lln.main3}.}
	Set 
	\[c_{n,i}=1,\  X_{n,i}=X_i,1\le i\le n,n\ge1.\]
	Then \eqref{wl03} coincides with \eqref{wl.array03}.
	Theorem \ref{thm.lln.main3} follows from Theorem \ref{thm.lln.array2}.\qed
	
	\vskip.1in
	We will now
	present an example to illustrate Theorem \ref{thm.lln.main3}.
	This example shows that 
	for $0<p<1$ and $b_n=n^{1/p}$,
	there exists a sequence of random variables $\{X_n,n\ge1\}$ with no stochastically dominating random variable and
	condition \eqref{wl03} is satisfied. In this example, we also show that \eqref{chandra03} (with $0<p<1$) 
	holds but \eqref{chandra01} (with $0<p<1$) does not.
	
	\begin{exm} {\rm 
			Let $0<p<1$, $b_n=n^{1/p},\ n\ge1$ and let $\{X_n,n\ge1\}$ be
			a sequence of random variables such that
			\begin{equation}\label{exm01}
				\mathbb{P}(X_n=-1)=\mathbb{P}(X_n=1)=1/2\ \text{ for }n\not=2^m,\ m\ge0,
			\end{equation}
			and 
			\begin{equation}\label{exm02}
				\mathbb{P}\left(X_{2^m}=-2^{m/p}/m^{1/p}\right)=\mathbb{P}\left(X_{2^m}=2^{m/p}/m^{1/p}\right)=1/2 \text{ for } m\ge0.
			\end{equation}
			Then
			\[\sup_{n\ge 1}\mathbb{P}(|X_n|>x)= \sup_{m\ge 1} \mathbb{P}(|X_{2^m}|>x)=1 \text{ for all }x\ge 1.\]
			Thus there is no random variable $X$ such that the sequence $\{X_n,n\ge1\}$ is stochastically dominated by $X$,
			and so we cannot apply Theorem 2.1 of Boukhari \cite{boukhari2021weak}.

			Now, for $n\ge1$, let $m\ge0$ be such that $2^m\le n<2^{m+1}$. Then
			\begin{equation}
				\begin{split}
					\dfrac{1}{n}\sum_{i=1}^{n}\E\left(|X_i|^p\log(|X_i|)\right)&\le 1+\dfrac{1}{2^m}\sum_{i=0}^m\E\left(|X_{2^i}|^p\log(|X_{2^i}|)\right)\\
					&=1+\dfrac{1}{2^m}\sum_{i=0}^m\dfrac{2^i}{i}\left(\dfrac{i-\log(i)}{p}\right)\\
					&\le 1+\dfrac{2}{p}<\infty.
				\end{split}
			\end{equation}
			Therefore, $\{|X_n|^p,n\ge1\}$ is uniformly integrable in the Ces\`{a}ro sense by the de La Vall\'{e}e Poussin criterion for the Ces\`{a}ro uniform integrability.
			Then by Corollary \ref{cor.Cesaro.5}, the sequence $\{X_n,n\ge1\}$ is
			stochastically dominated in the Ces\`{a}ro sense by a random variable $X$ 
			with distribution function
			\begin{equation}\label{exm03}
				F(x)=1-\sup_{n\ge 1}\dfrac{1}{n}\sum_{i=1}^n\mathbb{P}(|X_i|>x):=1-G(x),\ x\in\R,
			\end{equation}
			and condition \eqref{wl03} (with $b_n\equiv n^{1/p}$) is satisfied. It thus follows from Theorem
			\ref{thm.lln.main3} that the WLLN \eqref{wl05} holds.
			
			Finally, it is clear that $\mathbb{P}(|X_n|^p>n)=0$ for all $n\ge 1$ so
			that \eqref{chandra03} holds. We will show that \eqref{chandra01} 
			(with $0<p<1$) is not satisfied. To see this, for $x\ge 1$, let $n_x$ be
			the smallest integer such that
			\[\dfrac{2^{n_x}}{n_{x}}>x^p.\]
			Then for $x\ge1$, 
			\begin{equation}\label{exm05}
				\dfrac{2^{n_x}}{n_{x}}>x^p\ge \dfrac{2^{n_x-1}}{n_{x}-1}
			\end{equation}
			and it follows from \eqref{exm05} that there exists $\varepsilon_0>0$ such that
			\begin{equation}\label{exm07}
				\dfrac{1}{2^{n_x}}>\dfrac{\varepsilon_0}{x^p\log(x)} \ \text{ for all large }x.
			\end{equation}
			Combining \eqref{exm01}--\eqref{exm03} and \eqref{exm07}, we have
			\begin{equation}\label{exm09}
				G(x)=\sup_{n\ge 1}\dfrac{1}{n}\sum_{i=1}^n\mathbb{P}(|X_i|>x)\ge \dfrac{1}{2^{n_x}}\sum_{i=1}^{2^{n_x}}
				\mathbb{P}(|X_i|>x)=\dfrac{1}{2^{n_x}}\ge \dfrac{\varepsilon_0}{x^p\log(x)} \text{ for all large } x
			\end{equation}
			and it follows from \eqref{exm09} that 
			\eqref{chandra01} 
			(with $0<p<1$) fails. Thus we cannot apply Remark 3 of Chandra and Ghosal \cite{chandra1996extensions} to obtain the Marcinkiewicz--Zygmund
			SLLN for the case $0<p<1$.}
	\end{exm}

	Now, we discuss about the normalizing sequences in the WLLN in Theorem \ref{thm.lln.main3}. We note that for $0< p<2$ and $b_n\equiv n^{1/p}$, 
	\eqref{wl01} is fulfilled if $0<p<1$ but
	it fails to hold if $1\le p< 2$. For the case where $1\le p<2$, we also have to require some dependence structures to obtain the WLLN (see Boukhari \cite{boukhari2021weak}
	for a counterexample).
	Kruglov \cite{kruglov2011generalization} established
	a Kolmogorov--Feller-type WLLN for sequences of negatively associated identically distributed 
	random variables with normalizing sequences $b_n,n\ge1$ satisfying
	\begin{equation}\label{wl.array51}
		\sum_{i=1}^{n}\dfrac{b_{i}^2}{i^2}=O\left(\dfrac{b_{n}^2}{n}\right).
	\end{equation}
	It was observed by Kruglov \cite{kruglov2011generalization} that \eqref{wl.array51} holds for the case $b_n\equiv n^{1/p}L(n)$,
	where $1\le p<2$ and $L(\cdot)$ is a slowly varying function.
	The following theorem appears to be new even when the underlying random variables are independent.
	It also	extends the sufficient part of Theorem 1 of Kruglov \cite{kruglov2011generalization}. 
	The proof of Theorem \ref{thm.lln.array3} can be obtained by proceeding in a similar manner as that of Theorem \ref{thm.lln.array2}:
	We firstly use the nondecreasing
	truncation as in the proof of Theorem \ref{thm.lln.array1}, and 
	use the maximal inequality \eqref{eqH.condition} instead of
	the triangular inequality (in the place of the second inequality in \eqref{wl138}), and then change the other places accordingly. 
	We leave the details to the interested reader.
	
	\begin{thm}\label{thm.lln.array3}
		Let $\{X_{n,i},1\le i\le n,n\ge1\}$ be an array of random variables
		such that for each $n\ge1$ fixed, the collection $\{X_{n,i},1\le i\le n\}$ satisfies condition $(H)$ and let $G(\cdot)$
be as in Theorem \ref{thm.lln.array1}. Let
		$\{b_n,n\ge1\}$ be a nondecreasing sequence of positive real numbers satisfying 
		\eqref{wl.array51} and let $\{c_{n,i},1\le i\le n,n\ge1\}$ be an array of nonnegative real numbers satisfying
		\begin{equation}\label{wl.weight13}
			0<A_n:=\sum_{i=1}^n c_{n,i}^2\le C n,\ n\ge 1.
		\end{equation}
Let
		\[\hat{G}(x)=\sup_{n\ge1}\sum_{i=1}^{n}a_{n,i}\P(|X_{n,i}|>x),\ x\in\R,\]
		where $a_{n,i}=A_{n}^{-1}c_{n,i}^2, \ 1\le i\le n, n\ge 1.$
		If
		\begin{equation}\label{wl.array53}
			\lim_{k\to\infty}kG(b_k)=0\ \text{ and }\ \lim_{k\to\infty}k\hat{G}(b_k)=0,
		\end{equation}
		then the WLLN 
		\begin{equation*}
			\dfrac{1}{b_n}\max_{j\le n}\left|\sum_{i=1}^j c_{n,i}\left(X_{n,i}-\E(X_{n,i}\mathbf{1}(|X_{n,i}|\le b_n))\right)\right|\overset{\P}{\to} 0 \text{ as }n\to\infty
		\end{equation*}
		is obtained.
	\end{thm}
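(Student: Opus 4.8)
The plan is to run the proof of Theorem~\ref{thm.lln.array2} with two changes dictated by the remark preceding the statement: use the nondecreasing (clipping) truncation from the proof of Theorem~\ref{thm.lln.array1}, and replace the triangular--inequality estimate by the second--moment maximal inequality \eqref{eqH.condition}. First I would record the domination. Since \eqref{wl.array53} forces $\lim_{x\to\infty}G(x)=\lim_{x\to\infty}\hat G(x)=0$, Corollary~\ref{cor.character.for.stochastic.domination.Gut1} and Theorem~\ref{thm.character.for.stochastic.domination.1} furnish random variables $X,Y$ with $F_X=1-G$ and $F_Y=1-\hat G$, so that $\sup_{n}\frac1n\sum_{i=1}^n\P(|X_{n,i}|>x)=\P(|X|>x)$ and $\sup_{n}\sum_{i=1}^n a_{n,i}\P(|X_{n,i}|>x)=\P(|Y|>x)$ with $a_{n,i}=A_n^{-1}c_{n,i}^2$; then \eqref{wl.array53} reads $k\P(|X|>b_k)\to0$ and $k\P(|Y|>b_k)\to0$. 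Exactly as \eqref{wl137} is deduced from \eqref{wl01}, the squared hypothesis \eqref{wl.array51} yields here the sharper fact $n/b_n^2\to0$. I would then set $Y_{n,i}=X_{n,i}^{(b_n)}$ (the clipping at level $b_n$), write $\mu_{n,i}=\E(X_{n,i}\mathbf{1}(|X_{n,i}|\le b_n))$, and decompose
\[
X_{n,i}-\mu_{n,i}=(X_{n,i}-Y_{n,i})+(Y_{n,i}-\E Y_{n,i})+(\E Y_{n,i}-\mu_{n,i}).
\]
The first block vanishes off $\bigcap_{i}\{|X_{n,i}|\le b_n\}$, whose complement has probability at most $\sum_i\P(|X_{n,i}|>b_n)\le nG(b_n)=n\P(|X|>b_n)\to0$, so it tends to $0$ in probability exactly as in \eqref{wl133}.

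The third (deterministic) block is where both halves of \eqref{wl.array53} are genuinely used. Since $|\E Y_{n,i}-\mu_{n,i}|=b_n|\P(X_{n,i}>b_n)-\P(X_{n,i}<-b_n)|\le b_n\P(|X_{n,i}|>b_n)$, its normalized contribution is at most $\sum_{i=1}^n c_{n,i}\P(|X_{n,i}|>b_n)$, and by the Cauchy--Schwarz inequality
\[
\sum_{i=1}^n c_{n,i}\P(|X_{n,i}|>b_n)\le\left(\sum_{i=1}^n c_{n,i}^2\,\P(|X_{n,i}|>b_n)\right)^{1/2}\left(\sum_{i=1}^n \P(|X_{n,i}|>b_n)\right)^{1/2}\le\bigl(A_n\hat G(b_n)\bigr)^{1/2}\bigl(nG(b_n)\bigr)^{1/2}.
\]
Because $A_n\le Cn$ by \eqref{wl.weight13}, this is at most $C^{1/2}\bigl(n\hat G(b_n)\bigr)^{1/2}\bigl(nG(b_n)\bigr)^{1/2}\to0$; this is precisely why a single tail condition cannot suffice and both limits in \eqref{wl.array53} must be imposed.

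For the second block $Y_{n,i}-\E Y_{n,i}$, the heart of the argument, I would apply the maximal inequality of condition $(H)$ to the rescaled clipped array $\{c_{n,i}Y_{n,i}\}$ to get
\[
\E\left(\max_{1\le j\le n}\left|\sum_{i=1}^j c_{n,i}(Y_{n,i}-\E Y_{n,i})\right|\right)^2\le C\sum_{i=1}^n c_{n,i}^2\,\E\bigl(Y_{n,i}^2\bigr).
\]
Using $\E(Y_{n,i}^2)=\int_0^{b_n}2x\,\P(|X_{n,i}|>x)\dx x$ and the $\{a_{n,i}\}$-domination by $Y$,
\[
\sum_{i=1}^n c_{n,i}^2\,\E\bigl(Y_{n,i}^2\bigr)=A_n\int_0^{b_n}2x\sum_{i=1}^n a_{n,i}\P(|X_{n,i}|>x)\dx x\le A_n\int_0^{b_n}2x\,\P(|Y|>x)\dx x,
\]
so, after Markov's inequality, this block is controlled by $\frac{Cn}{b_n^2}\int_0^{b_n}2x\,\P(|Y|>x)\dx x$. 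Splitting the integral over $[b_{k-1},b_k]$ (with $b_0=0$) bounds it by $\frac{Cn}{b_n^2}\sum_{k=1}^n(b_k^2-b_{k-1}^2)\P(|Y|>b_{k-1})$, a Toeplitz average with weights $\frac{n}{b_n^2}\cdot\frac{b_k^2-b_{k-1}^2}{k}$ and terms $k\P(|Y|>b_{k-1})\to0$. By Abel summation and \eqref{wl.array51}, $\sum_{k=1}^n\frac{b_k^2-b_{k-1}^2}{k}=\frac{b_n^2}{n}+\sum_{k=1}^{n-1}\frac{b_k^2}{k(k+1)}=O(b_n^2/n)$, so the weight sum is $O(1)$, while each fixed-$k$ weight tends to $0$ because $n/b_n^2\to0$; the Toeplitz lemma then forces this block to $0$ in probability. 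Combining the three blocks gives the asserted WLLN.

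The step I expect to be the crux is the weighted maximal inequality above: condition $(H)$ as written in \eqref{eqH.condition} is unweighted and is phrased for a single clipping level, whereas the summands here carry the weights $c_{n,i}$. The resolution is that $c_{n,i}Y_{n,i}=c_{n,i}X_{n,i}^{(b_n)}=(c_{n,i}X_{n,i})^{(c_{n,i}b_n)}$ is a nonnegative rescaling of a monotone (clipping) transform of $X_{n,i}$, so for each dependence structure listed after \eqref{eqH.condition}---negative association, negative superadditive dependence, and AANA with $\ell_2$ mixing coefficients---the array $\{c_{n,i}Y_{n,i}\}$ inherits the same structure and hence satisfies \eqref{eqH.condition}; this inheritance, together with rewriting the various sums in terms of $c_{n,i}^2$ and $A_n$, is the content of the phrase ``change the other places accordingly.''
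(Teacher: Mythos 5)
Your proof is correct and takes essentially the same route as the paper: the paper does not write this proof out, but only instructs the reader to repeat the proof of Theorem~\ref{thm.lln.array2} using the clipping truncation of Theorem~\ref{thm.lln.array1} and the maximal inequality \eqref{eqH.condition} in place of the triangle inequality, which is precisely the three-block argument you carry out (including the squared Toeplitz step driven by $n/b_n^2\to 0$ from \eqref{wl.array51}, and the Cauchy--Schwarz treatment, using both halves of \eqref{wl.array53}, of the discrepancy between the clipped mean $\E(Y_{n,i})$ and the truncated mean $\E(X_{n,i}\mathbf{1}(|X_{n,i}|\le b_n))$). The subtlety you flag---that \eqref{eqH.condition} is stated for unweighted sums at a single clipping level, whereas the argument needs it for the $c_{n,i}$-weighted clipped sums---is equally glossed over by the paper's sketch, and your resolution via inheritance of the dependence structure under clipping and nonnegative rescaling is the natural reading of the paper's phrase ``change the other places accordingly.''
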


	We now apply Corollary \ref{cor.Cesaro.5} and Theorem \ref{thm.lln.array3} to obtain a WLLN for arrays of random variables
	under the Ces\`{a}ro uniform integrability condition.
	For simplicity and since it is just meant to be an illustration, we only consider the unweighted case, i.e., the case where $c_{n,i}\equiv1$.
	\begin{cor}\label{cor.wl.array6}
		Let $1\le p<2$ and let $\{X_{n,i},1\le i\le n,n\ge1\}$ be an array of random variables
		such that for each $n\ge1$ fixed, the collection $\{X_{n,i},1\le i\le n\}$ satisfies condition $(H)$. Let $G(\cdot)$
		be as in Theorem \ref{thm.lln.array1} and $L(\cdot)$ be a slowly varying function. Let $\tilde{L}(\cdot)$ be
 the Bruijin conjugate of $L(\cdot)$. In the case $p=1$, we further assume that $L(x)$ is nondecreasing and $L(x)\ge 1$ for all $x\ge 0$.
		If $\{|X_{n,i}|^pL(|X_{n,i}|^p),1\le i\le n, n\ge1\}$ is uniformly integrable in the Ces\`{a}ro sense, that is,
		\begin{equation}\label{wl.array56}
			\lim_{a\to\infty}\sup_{n\ge 1}\dfrac{1}{n}\sum_{i=1}^n  \E\left(|X_{n,i}|^pL(|X_{n,i}|^p)\mathbf{1}(|X_{n,i}|>a)\right)=0,
		\end{equation}
		then the WLLN 
		\begin{equation}\label{wl.array57}
			\dfrac{1}{n^{1/p}\tilde{L}^{1/p}(n)}\max_{j\le n}\left|\sum_{i=1}^j \left(X_{n,i}-\E(X_{n,i})\right)\right|\overset{\mathbb{P}}{\to} 0 \text{ as }n\to\infty
		\end{equation} is obtained.
	\end{cor}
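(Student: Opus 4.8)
The plan is to derive the result from Theorem~\ref{thm.lln.array3} applied to the unweighted array (the case $c_{n,i}\equiv 1$) together with Corollary~\ref{cor.Cesaro.5}(ii), and then to replace the truncated centering produced by Theorem~\ref{thm.lln.array3} with the full expectation $\E(X_{n,i})$ appearing in \eqref{wl.array57}.

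First, since $\{|X_{n,i}|^pL(|X_{n,i}|^p)\}$ is uniformly integrable in the Ces\`aro sense by \eqref{wl.array56}, Corollary~\ref{cor.Cesaro.5}(ii) furnishes a random variable $X$ with distribution function $F=1-G$ that dominates $\{X_{n,i}\}$ in the Ces\`aro sense and satisfies the tail condition \eqref{eq.stoch.domi.15}, namely $\lim_{x\to\infty}x\P(|X|>x^{1/p}\tilde L^{1/p}(x))=0$. I would set $b_n=n^{1/p}\tilde L^{1/p}(n)$ and take $c_{n,i}\equiv 1$, so that $A_n=n$, $a_{n,i}=1/n$ and $\hat G\equiv G$. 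Then \eqref{wl.weight13} is trivial; \eqref{wl.array51} holds because $b_n$ has the form $n^{1/p}\times(\text{slowly varying})$, by the observation of Kruglov~\cite{kruglov2011generalization} recalled after \eqref{wl.array51}; and since $G(x)=\P(|X|>x)$, both halves of \eqref{wl.array53} collapse to $\lim_k k\P(|X|>b_k)=0$, which is exactly \eqref{eq.stoch.domi.15} read along $x=k$. (We may assume, as the paper does for slowly varying functions, that $g(x)=x^{1/p}\tilde L^{1/p}(x)$ is increasing, so $\{b_n\}$ is nondecreasing.) Theorem~\ref{thm.lln.array3} then yields the WLLN with the truncated centering $\E(X_{n,i}\mathbf 1(|X_{n,i}|\le b_n))$.

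The main obstacle is the centering adjustment: one must show
\[
\frac{1}{b_n}\sum_{i=1}^n\E\bigl(|X_{n,i}|\mathbf 1(|X_{n,i}|>b_n)\bigr)\to 0,
\]
since $\E(X_{n,i})-\E(X_{n,i}\mathbf 1(|X_{n,i}|\le b_n))=\E(X_{n,i}\mathbf 1(|X_{n,i}|>b_n))$. The tempting shortcut of bounding the left side by $\tfrac{n}{b_n}\E(|X|\mathbf 1(|X|>b_n))$ and invoking a finite moment of $X$ fails, because Corollary~\ref{cor.Cesaro.5}(ii) provides only the weakened moment \eqref{eq.stoch.domi.14}; indeed $X$ may well have $\E|X|=\infty$. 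Instead I would use the Ces\`aro uniform integrability \eqref{wl.array56} directly, via the pointwise inequality, valid on $\{|X_{n,i}|>b_n\}$,
\[
\frac{|X_{n,i}|}{b_n}\le C\,\frac{|X_{n,i}|^pL(|X_{n,i}|^p)}{b_n^pL(b_n^p)}.
\]
After cancellation this amounts to $(|X_{n,i}|/b_n)^{1-p}\,L(b_n^p)/L(|X_{n,i}|^p)\le C$; for $p=1$ it is immediate from the standing assumption that $L$ is nondecreasing with $L\ge 1$, and for $1<p<2$ it follows from Potter-type bounds (Bingham et al.~\cite{bingham1989regular}) with the Potter exponent chosen small enough to be absorbed by the strictly negative power $(|X_{n,i}|/b_n)^{-(p-1)}$.

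Summing this inequality and using $b_n^pL(b_n^p)\sim n$ — which holds because $f(g(x))/x\to 1$ with $f(x)=x^pL(x^p)$, $g(x)=x^{1/p}\tilde L^{1/p}(x)$, exactly as in the proof of Theorem~\ref{thm.sufficiency.for.stochastic.domination.5} — bounds the displayed quantity by a constant multiple of $\tfrac1n\sum_{i=1}^n\E(|X_{n,i}|^pL(|X_{n,i}|^p)\mathbf 1(|X_{n,i}|>b_n))$, which tends to $0$ by \eqref{wl.array56} since $b_n\to\infty$. A final application of the triangle inequality to the maximal partial sums shows that the centering $\E(X_{n,i}\mathbf 1(|X_{n,i}|\le b_n))$ may be replaced by $\E(X_{n,i})$, which gives \eqref{wl.array57} and completes the argument.
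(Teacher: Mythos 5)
Your proof is correct and takes essentially the same route as the paper: Corollary \ref{cor.Cesaro.5} plus Theorem \ref{thm.lln.array3} with $c_{n,i}\equiv 1$ and $b_n=n^{1/p}\tilde{L}^{1/p}(n)$, followed by removing the truncation in the centering via the pointwise bound $|X_{n,i}|\mathbf{1}(|X_{n,i}|>b_n)\le C\,|X_{n,i}|^pL(|X_{n,i}|^p)\mathbf{1}(|X_{n,i}|>b_n)\big/\bigl(b_n^{p-1}L(b_n^p)\bigr)$ together with $b_n^pL(b_n^p)\sim n$, and concluding with \eqref{wl.array56}. The only incidental difference is that you justify the pointwise comparison by Potter bounds for $1<p<2$, whereas the paper obtains it (with constant $1$) from its standing convention that $x^{\alpha}L(x)$ may be assumed nondecreasing; both justifications are valid.
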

	
	\begin{proof}
		In Theorem \ref{thm.lln.array3}, if we choose $c_{n,i}\equiv1$, then \eqref{wl.weight13} is automatic, and $\hat{G}(x)\equiv G(x)$. By Corollary \ref{cor.Cesaro.5},
		it follows from \eqref{wl.array56} that \eqref{wl.array53} holds with $b_n\equiv n^{1/p}\tilde{L}^{1/p}(n)$. 
		Applying Theorem \ref{thm.lln.array3}, we obtain
		\begin{equation*}
			\dfrac{1}{b_n}\max_{j\le n}\left|\sum_{i=1}^j \left(X_{n,i}-\E(X_{n,i}\mathbf{1}(|X_{n,i}|\le b_n))\right)\right|\overset{\mathbb{P}}{\to} 0 \text{ as }n\to\infty.
		\end{equation*}
		To obtain \eqref{wl.array57}, it remains to show that
		\begin{equation}\label{wl.array61}
			\dfrac{1}{b_n}\max_{j\le n}\left|\sum_{i=1}^j \E(X_{n,i}\mathbf{1}(|X_{n,i}|> b_n))\right| \to  0 \text{ as }n\to\infty.
		\end{equation}
		Since the function $x^{p-1}L(x)$ is nondecreasing, we have
		\begin{equation}\label{wl.array65}
			\begin{split}
				&\dfrac{1}{b_n}\max_{j\le n}\left|\sum_{i=1}^j \E(X_{n,i}\mathbf{1}(|X_{n,i}|> b_n))\right| \le \dfrac{1}{b_n}\sum_{i=1}^n \E(|X_{n,i}|\mathbf{1}(|X_{n,i}|> b_n))\\
				&\le \dfrac{1}{b_n}\sum_{i=1}^n \dfrac{\E(|X_{n,i}|^pL(|X_{n,i}|^p)\mathbf{1}(|X_{n,i}|> b_n))}{b_{n}^{p-1}L(b_{n}^p)}\\
				&=\left(\dfrac{1}{\tilde{L}(n)L\left(n\tilde{L}(n)\right)}\right)\dfrac{1}{n}\sum_{i=1}^n \E(|X_{n,i}|^pL(|X_{n,i}|^p)\mathbf{1}(|X_{n,i}|> b_n)).
			\end{split}
		\end{equation}
		Now, from the second half of \eqref{BGT1513} we have $\lim_{n\to\infty} \tilde{L}(n)L\left(n\tilde{L}(n)\right)=1$, and from \eqref{wl.array56} we have
		\[\lim_{n\to\infty}\dfrac{1}{n}\sum_{i=1}^n \E(|X_{n,i}|^pL(|X_{n,i}|^p)\mathbf{1}(|X_{n,i}|> b_n))=0.\]
		Therefore \eqref{wl.array61} follows from \eqref{wl.array65}. The proof of the corollary is completed. \qed
	\end{proof}
	
	\section{Conclusions and open problems}\label{sec:open}
	
	In Section \ref{sec:proofs}, our results on the concept of $\{a_{n,i}\}$-stochastic domination
	are applied to obtain the WLLNs for weighted sums. The results on the Ces\`{a}ro stochastic domination case
	are applied to obtain rate of convergence in the SLLN with general normalizing sequences under the Chandra--Ghosal-type condition, 
	and these results help us to remove an assumption of a SLLN established by Chandra and Ghosal \cite{chandra1996extensions}. 
	The results on the concept of $\{a_{n,i}\}$-stochastic domination may also be useful in proving weighted SLLNs of Chandra and Ghosal \cite{chandra1996strong}
	as we will describe as follows.
	
	Let $1\le p<2$ and
	let $\{a_n,n\ge1\}$ be a sequence of positive real numbers with
	\[A_n:=\sum_{i=1}^n a_{i}\to\infty\ \text{ as }n\to\infty.\]
	Let $\{X_n,n\ge1\}$ be a sequence of mean zero random variables which satisfies suitable dependence conditions. 
	Let $G(x)$ be as in Theorem \ref{thm.chandra}
	and let
	\[\tilde{G}(x)=\sup_{n\ge 1}\left(\sum_{i=1}^n a_{i}^{1/p}\right)^{-1}\sum_{i=1}^n a_{i}^{1/p}\mathbb{P}(|X_i|>x),x\in\R.\]
	Chandra and Ghosal \cite{chandra1996strong} considered the following three conditions (see (2.14)--(2.16) in \cite{chandra1996strong}):
	\begin{equation}\label{chandra11}
		\int_{0}^{\infty}x^{p-1}G(x)\dx x<\infty,
	\end{equation}
	\begin{equation}\label{chandra13}
		\int_{0}^{\infty}x^{p-1}\tilde{G}(x)\dx x<\infty,
	\end{equation}
	and
	\begin{equation}\label{chandra15}
		\sum_{n= 1}^{\infty}\mathbb{P}(|X_n|^p>A_n/a_n)<\infty.
	\end{equation}	
	Chandra and Ghosal (see Theorems 2.6 and 2.7 in \cite{chandra1996strong}) proved that if
	\eqref{chandra11}, \eqref{chandra13} and \eqref{chandra15} are all satisfied, then the weighted Marcinkiewicz--Zygmund SLLN
	\begin{equation*}\label{chandra17}
		\dfrac{\sum_{i=1}^n a_{i}^{1/p} X_i}{A_{n}^{1/p}} \to 0 \text{ a.s. as }n\to\infty
	\end{equation*}	
	is obtained. In view of Theorems \ref{thm.lln.main1} and \ref{thm.lln.array2}, we state an open problem as to whether or not 
	the Chandra and Ghosal result mentioned above still holds without
	Condition \eqref{chandra15}.
	
	For $n\ge1$, let
	\[a_{n,i}=\left(\sum_{i=1}^n a_{i}^{1/p}\right)^{-1}a_{i}^{1/p}, \text{ }1\le i\le n.\]
	Since $\tilde{G}(x)$ is nonincreasing, it follows from \eqref{chandra13} that $\lim_{x\to \infty}\tilde{G}(x)=0$.
	By Theorem \ref{thm.character.for.stochastic.domination.1}, $\{X_n,n\ge1\}$ is $\{a_{n,i}\}$-stochastically dominated by
	a random variable $X$ with distribution function $F(x)=1-\tilde{G}(x)$, and \eqref{chandra13} becomes
	$\E(|X|^p)<\infty$. In view of the proof of Theorem \ref{thm.lln.array2}, the results on the concept of $\{a_{n,i}\}$-stochastic domination established in Sections \ref{sec:stochastic_domination}
	and \ref{sec:relation} 
	may help in answering the above open problem.
	
	Finally, we present an open problem concerning Corollary \ref{cor.wl.array6}.
	For the case where $L(x)\equiv\tilde{L}(x)\equiv 1$, we can obtain convergence in mean of order $p$ in \eqref{wl.array57} (see, e.g.,
	Theorem 1 in \cite{chandra1989uniform},
	Theorem 4 in \cite{ordonezcabrera1994convergence}, Theorem 2.1 in \cite{thanh2005lp}). 
	However, the methods in \cite{chandra1989uniform,ordonezcabrera1994convergence,thanh2005lp}
	do not seem to work for general slowly varying function $L(\cdot)$, even with assumption that the underlying random variables are independent. 
	It is an open problem as to whether or not convergence in mean of order $p$ prevails in
	the conclusion \eqref{wl.array57}.
	
	\vskip.2in	
\noindent	\textbf{Conflict of interest:} The author has no conflict of interest.
	\vskip.2in	
\noindent	\textbf{Funding:} The author did not receive support from any organization for this work.

	\bibliographystyle{spmpsci}      
	\bibliography{mybib}

\begin{thebibliography}{10}
\providecommand{\url}[1]{{#1}}
\providecommand{\urlprefix}{URL }
\expandafter\ifx\csname urlstyle\endcsname\relax
  \providecommand{\doi}[1]{DOI~\discretionary{}{}{}#1}\else
  \providecommand{\doi}{DOI~\discretionary{}{}{}\begingroup
  \urlstyle{rm}\Url}\fi

\bibitem{adler2018exact}
Adler, A., Matu{\l}a, P.: On exact strong laws of large numbers under general
  dependence conditions.
\newblock Probability and Mathematical Statistics \textbf{38}(1), 103--121
  (2018)

\bibitem{anh2021marcinkiewicz}
Anh, V.T.N., Hien, N.T.T., Th\`{a}nh, L.V., Van, V.T.H.: The
  {Marcinkiewicz--Zygmund}-type strong law of large numbers with general
  normalizing sequences.
\newblock Journal of Theoretical Probability \textbf{34}(1), 331--348 (2021)

\bibitem{bingham1989regular}
Bingham, N.H., Goldie, C.M., Teugels, J.L.: Regular variation, vol.~27.
\newblock Cambridge University Press (1989)

\bibitem{boukhari2021weak}
Boukhari, F.: On a weak law of large numbers with regularly varying normalizing
  sequences.
\newblock Journal of Theoretical Probability. Online first pp. 1--12 (2021)

\bibitem{chandra1989uniform}
Chandra, T.K.: Uniform integrability in the {Ces{\`a}ro} sense and the weak law
  of large numbers.
\newblock Sankhy{\=a}: The Indian Journal of Statistics, Series A pp. 309--317
  (1989)

\bibitem{chandra1996extensions}
Chandra, T.K., Ghosal, S.: {Extensions of the strong law of large numbers of
  Marcinkiewicz and Zygmund for dependent variables}.
\newblock Acta Mathematica Hungarica \textbf{71}(4), 327--336 (1996)

\bibitem{chandra1996strong}
Chandra, T.K., Ghosal, S.: The strong law of large numbers for weighted
  averages under dependence assumptions.
\newblock Journal of Theoretical Probability \textbf{9}(3), 797--809 (1996)

\bibitem{chandra1992cesaro}
Chandra, T.K., Goswami, A.: {Ces{\`a}ro} uniform integrability and the strong
  law of large numbers.
\newblock Sankhy{\=a}: The Indian Journal of Statistics, Series A pp. 215--231
  (1992)

\bibitem{galambos1973regularly}
Galambos, J., Seneta, E.: Regularly varying sequences.
\newblock Proceedings of the American Mathematical Society \textbf{41}(1),
  110--116 (1973)

\bibitem{gut1992complete}
Gut, A.: Complete convergence for arrays.
\newblock Periodica Mathematica Hungarica \textbf{25}(1), 51--75 (1992)

\bibitem{hien2015weak}
Hien, N.T.T., Th\`{a}nh, L.V.: On the weak laws of large numbers for sums of
  negatively associated random vectors in {Hilbert} spaces.
\newblock Statistics and Probability Letters \textbf{107}, 236--245 (2015)

\bibitem{ko2005hajeck}
Ko, M.H., Kim, T.S., Lin, Z.: {The H{\'a}jeck-Renyi inequality for the AANA
  random variables and its applications}.
\newblock Taiwanese Journal of Mathematics \textbf{9}(1), 111--122 (2005)

\bibitem{kruglov2011generalization}
Kruglov, V.M.: A generalization of weak law of large numbers.
\newblock Stochastic Analysis and Applications \textbf{29}(4), 674--683 (2011)

\bibitem{ordonezcabrera1994convergence}
{Ord{\'o}{\~n}ez Cabrera}, M.: Convergence of weighted sums of random variables
  and uniform integrability concerning the weights.
\newblock Collectanea Mathematica pp. 121--132 (1994)

\bibitem{rosalsky2009weak}
Rosalsky, A., Th\`{a}nh, L.V.: Weak laws of large numbers for double sums of
  independent random elements in {Rademacher type $p$ and stable type $p$
  Banach spaces}.
\newblock Nonlinear Analysis: Theory, Methods and Applications \textbf{71}(12),
  e1065--e1074 (2009)

\bibitem{rosalsky2021note}
Rosalsky, A., Th\`{a}nh, L.V.: A note on the stochastic domination condition
  and uniform integrability with applications to the strong law of large
  numbers.
\newblock Statistics and Probability Letters \textbf{178}, 109181 (2021)

\bibitem{thanh2005lp}
Th\`{a}nh, L.V.: {On the $L_p$-convergence for multidimensional arrays of
  random variables}.
\newblock International Journal of Mathematics and Mathematical Sciences
  \textbf{2005}(8), 1317--1320 (2005)

\end{thebibliography}
	
\end{document}